\newtheorem{thm}{Theorem}[section]
\newtheorem{lem}[thm]{Lemma}
\newtheorem{prop}[thm]{Proposition}
\newtheorem{cor}[thm]{Corollary}
\theoremstyle{definition}
\newtheorem{defn}[thm]{Definition}
\newtheorem{ex}[thm]{Example}
\theoremstyle{remark}
\newtheorem{rem}[thm]{Remark}
\def\C{{\mathbb C}} 
\def\N{{\mathbb N}}
\def\B{{B}} 
\def\Cpt{{K}} 
\def\H{{\mathcal H}}
\def\K{{\mathcal K}}
\def\M{{M}}
\mathchardef\ordinarycolon\mathcode`\: 
\def\vcentcolon{\mathrel{\mathop\ordinarycolon}} 
\providecommand*\coloneqq{\mathrel{\vcentcolon\mkern-1.2mu}=}
\def\cast{$C^{*}$}
\providecommand{\id}{\mathop{\rm id}\nolimits}
\def\Cl{C^{*}_{\lambda}}
\def\Cu{C^{*}_{u}}
\def\R{\mathbb R}
\def\cb{\mathrm{cb}}
\newcommand\bk[1]{{\langle #1 \rangle}}
\newcommand\rrtimes{\rtimes_{r}}
\title{The Fubini Product and Its Applications}
\date{\today}   
\keywords{Fubini product, slice map property, invariant translation approximation property, uniform Roe algebra}
\author{Otgonbayar Uuye and Joachim Zacharias}
\address[Otgonbayar Uuye]{Institute of Mathematics\\ National University of Mongolia\\Ulaanbaatar\\Mongolia}
\address[Otgonbayar Uuye]{Department of Mathematics\\School of Arts and Sciences\\ National University of Mongolia\\Ih Surguuliin Gudamj 1\\Ulaanbaatar\\Mongolia}
\email{otogo@num.edu.mn}
\address[Joachim Zacharias]{School of Mathematics and Statistics\\University of Glasgow\\15 University Gardens\\ G128QW\\UK}
 \email{Joachim.Zacharias@glasgow.ac.uk}
\subjclass[2010]{Primary: 46B28, Secondary: 46L07, 47L25, 20F65}
\thanks{This research is partially funded by the Advanced Research Grant UTS-2015-SHUS-14 of the National University of Mongolia (OU) and by the Engineering and Physical Sciences Research Council Grant EP/I019227/2 (JZ)}
\begin{document}
\maketitle

\begin{abstract} 
The Fubini product of operator spaces provide a powerful tool for analysing properties of tensor products. In this paper we review the the theory of Fubini products and apply it to the problem of computing invariant parts of dynamical systems. In particular, we study the invariant translation approximation  property of discrete groups. 
\end{abstract}

\section{Introduction}

The Fubini product of \cast-algebras was first defined and studied by J. Tomiyama \cite{MR0218906,MR0397427,MR0450985,MR0420296} and Wassermann \cite{MR0410402}. It is now a standard tool in the study of operator algebras and operator spaces. See for instance \cite{MR567831,MR641529,MR715549,MR985277,MR1138840,MR1721796,MR2391387}. Unfortunately, to our knowledge, a comprehensive treatment of the subject is missing from the literature.

In this paper, we try to the remedy the situation. In Section \ref{section Fubini}, we give a survey of the theory of Fubini products and its applications. Most of the results in Section \ref{section Fubini} appear scattered in numerous articles, most notably \cite{MR0397427,MR0410402,MR0328609,MR567831,MR1138840,MR1220905,MR3272046}. After briefly recalling some definitions concerning operator spaces in Subsection \ref{sub operator spaces}, we define the Fubini product and prove its fundamental properties regarding functoriality, intersections, kernels, relative commutants, invariant elements, combinations in Subsections \ref{sub fubini product}-\ref{sub combinations}. In Subsection \ref{sub OAP}, we review the relation between the operator approximation property and the slice map property. 


In Section \ref{section ITAP}, we apply the results in Section \ref{section Fubini} to the study of groups with the invariant translation approximation property (ITAP) of J. Roe \cite[Section 11.5.3]{MR2007488}. In Subsection \ref{sub roe algebra}, we recall the definition of the uniform Roe algebra. In Subsection \ref{sub products}, we analyse the uniform Roe algebra of a product space. Finally, in Subsection \ref{sub itap}, we study the ITAP of product groups. We show that for countable discrete groups $G$ and $H$, if $G$ has the approximation property (AP) of Haagerup-Kraus \cite{MR1220905}, the product $G \times H$ has the ITAP if and only if $H$ has the ITAP. 

Finally, in Section \ref{section crossed}, we study the crossed product version of the Fubini product.

\section{The Fubini Product}\label{section Fubini}

\subsection{Operator Spaces}\label{sub operator spaces}
For the sake of completeness, we start with some definitions concerning operator spaces. See \cite{MR1793753,MR1976867} for a complete treatment.

Let $\H$ be a Hilbert space and let $\B(\H)$ denote the Banach space of bounded linear operators on $\H$. The ideal of compact operators on $\H$ is denoted $\Cpt(\H) \subseteq \B(\H)$. 
\begin{defn} An {\em operator space} on $\H$ is a closed subspace of $\B(\H)$. 
\end{defn}

\begin{defn} For $a \in \B(\H)$ and $b \in \B(\K)$, we write $a \otimes b$ for the corresponding element in $\B(\H \otimes \K)$. For subsets $A \subseteq \B(\H)$ and $B \subseteq \B(\K)$, we write $A \times B$ for the subset $\{a \otimes b \mid a \in A, b \in B\} \subseteq \B(\H \otimes \K)$ and $A \odot B$ for the linear span of $A \times B$.
\end{defn}

\begin{defn} Let $A \subseteq \B(\H)$ and $B \subseteq \B(\K)$ be operator spaces. We define the {\em tensor product} $A \otimes B$ as the norm closure of $A \odot B$ or, equivalently, the closed linear span of  $A \times B$ in $\B(\H \otimes \K)$. 
\end{defn}

\begin{rem} Suppose $A \subseteq \B(\H_{1})$, $A \subseteq \B(\H_{2})$, $B \subseteq \B(\K_{1})$ and $B \subseteq \B(\K_{2})$. Then for any $x \in A \odot B$, we have
	\begin{equation*}
	\|x\|_{\B(\H_{1} \otimes \K_{1})} = \|x\|_{\B(\H_{2} \otimes \K_{2})}.
	\end{equation*}
\end{rem}

\begin{lem}[({\cite[Lemma 3]{MR0218906}})]\label{lem tomiyama} Suppose $S$, $A \subseteq \B(\H)$ and $T$, $B \subseteq \B(\K)$. If $S \otimes T \subseteq A \otimes B$, then $S \subseteq A$ and $T \subseteq B$. \qed
\end{lem}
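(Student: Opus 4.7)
The natural tool here is the slice map. For any normal functional $\omega \in \B(\K)_{*}$, the map $R_{\omega} \coloneqq \id \otimes \omega$ extends to a bounded linear map $\B(\H \otimes \K) \to \B(\H)$ of norm $\|\omega\|$, uniquely determined on elementary tensors by $R_{\omega}(a \otimes b) = \omega(b)\, a$. The crucial observation is that for any operator spaces $X \subseteq \B(\H)$ and $Y \subseteq \B(\K)$, one has $R_{\omega}(X \otimes Y) \subseteq X$: this holds on the algebraic tensor product $X \odot Y$ by definition, and passes to the norm closure $X \otimes Y$ because $R_{\omega}$ is continuous and $X$ is closed.

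Applying this observation to both sides of the hypothesised inclusion, for any $s \in S$, $t \in T$ and any normal $\omega$ we obtain
\begin{equation*}
\omega(t)\, s \;=\; R_{\omega}(s \otimes t) \;\in\; R_{\omega}(A \otimes B) \;\subseteq\; A.
\end{equation*}
Provided $T$ is nontrivial, pick a nonzero $t \in T$; since normal functionals on $\B(\K)$ separate points (already the vector functionals $\langle \,\cdot\, \xi, \eta\rangle$ do so), some $\omega$ has $\omega(t) \neq 0$, whence $s \in A$. This gives $S \subseteq A$, and a completely symmetric argument with the left slice maps $L_{\varphi} \coloneqq \varphi \otimes \id$ for $\varphi \in \B(\H)_{*}$ yields $T \subseteq B$. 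The degenerate case in which one of $S$, $T$ is $\{0\}$ warrants only a brief separate remark.

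The only real technical ingredient is the construction of $R_{\omega}$ itself---that $\id \otimes \omega$ extends boundedly from the algebraic tensor product to the spatial tensor product inside $\B(\H \otimes \K)$. For \emph{normal} $\omega$ this is standard: writing $\omega = \sum_{n}\langle \,\cdot\, \xi_{n}, \eta_{n}\rangle$ with $\sum\|\xi_{n}\|\|\eta_{n}\| < \infty$ and setting $R_{\omega}(x) \coloneqq \sum_{n}(1 \otimes \eta_{n})^{*}\, x\, (1 \otimes \xi_{n})$ furnishes a bounded extension to all of $\B(\H \otimes \K)$. With that set-up in hand, the proof reduces to the short calculation above, so there is no real obstacle to overcome---the whole argument is essentially a one-line application of slice maps together with the separation property of $\B(\K)_{*}$.
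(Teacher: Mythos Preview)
The paper does not supply its own proof of this lemma; it simply cites Tomiyama and closes with a \qed. Your slice-map argument is the standard route and is correct: the key points---that $\id \otimes \omega$ extends boundedly to $\B(\H \otimes \K)$ for normal $\omega$, and that it carries $X \otimes Y$ into $X$---are exactly what is needed.

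One minor sharpening: the degenerate case is not just a side remark to be dispatched, it is an actual exception to the lemma as literally stated. If $T = \{0\}$ then $S \otimes T = \{0\} \subseteq A \otimes B$ holds for \emph{any} $S$, so the conclusion $S \subseteq A$ can fail. The lemma therefore tacitly assumes $S$ and $T$ nonzero (which is how Tomiyama states it and how it is used later in the paper). With that standing hypothesis your argument is complete.
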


\begin{defn} A linear map $\phi\colon A \to B$ of operator spaces is {\em completely bounded} if the map $\phi \odot \id_{D} \colon A \odot D \to B \odot D$ is bounded for all operator spaces $D$. We write $\phi \otimes \id_{D}$ for the extension $A \otimes D \to B \otimes D$.

Completely contractive and completely isometric maps are defined similarly.
\end{defn}

\begin{defn} The {\em completely bounded norm} of a completely bounded map $\phi \colon A \to B$ is
	\begin{equation*}
	\|\phi\|_{\cb} \coloneqq \left\|\phi \otimes \id_{\Cpt(\H)} \colon A \otimes \Cpt(\H) \to B \otimes \Cpt(\H)\right\|,
	\end{equation*}
where $\H$ is a separable infinite dimensional Hilbert space.	
\end{defn}

\begin{lem} Let $\phi\colon A \to B$ be a completely bounded map. Then we have
	\begin{equation*}
	\|\phi\|_{\cb} = \sup_{n} \left\|\phi \otimes \id_{\M_{n}} \colon A \otimes M_{n} \to B \otimes M_{n}\right\|.
	\end{equation*}
Moreover, for any operator space $D$, we have
	\begin{equation*}
	\|\phi \otimes \id_{D}\| \le \|\phi\|_{\cb}.
	\end{equation*}		
\qed	
\end{lem}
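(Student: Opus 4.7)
The two claims fit together: the first provides a workable formula for the cb-norm, and the second then follows by a finite-rank compression that reduces general $D$ to the matrix case.

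For the equality $\|\phi\|_{\cb} = \sup_n \|\phi \otimes \id_{M_n}\|$: the inequality $\sup_n \|\phi \otimes \id_{M_n}\| \leq \|\phi\|_{\cb}$ is immediate since each $M_n$ sits completely isometrically in $\Cpt(\H)$ as a corner $p_n \Cpt(\H) p_n$ for a rank-$n$ projection $p_n$, so that $\phi \otimes \id_{M_n}$ is a restriction of $\phi \otimes \id_{\Cpt(\H)}$. Conversely, I would fix an increasing sequence of finite-rank projections $(p_n)$ on $\H$ converging strongly to $1_{\H}$. For any element $x = \sum_{i=1}^{k} a_{i} \otimes b_{i} \in A \odot \Cpt(\H)$, the compressions $(1 \otimes p_n) x (1 \otimes p_n)$ converge to $x$ in norm (since each $b_i$ is compact) and lie in $A \otimes M_n$; applying $\phi \otimes \id_{\Cpt(\H)}$ and using that it restricts to $\phi \otimes \id_{M_n}$ on this subspace yields $\|(\phi \otimes \id_{\Cpt(\H)})(x)\| \leq \sup_n \|\phi \otimes \id_{M_n}\| \cdot \|x\|$. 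Density of $A \odot \Cpt(\H)$ in $A \otimes \Cpt(\H)$ then upgrades this to the full tensor product.

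For the inequality $\|\phi \otimes \id_D\| \leq \|\phi\|_{\cb}$ with general $D \subseteq \B(\K)$: given $x = \sum_{i=1}^{m} a_{i} \otimes d_{i} \in A \odot D$ and $\epsilon > 0$, I would pick unit vectors $u, v$ in the ambient Hilbert space of $B \otimes D$ with $|\langle (\phi \otimes \id_{D})(x) u, v \rangle| > \|(\phi \otimes \id_{D})(x)\| - \epsilon$. Approximate $u$ and $v$ within $\epsilon$ by $(1 \otimes P_F) u$ and $(1 \otimes P_F) v$, where $P_F$ is the projection onto a sufficiently large finite-dimensional subspace $F \subseteq \K$ of dimension $n$. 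Compressing, the matrix coefficient becomes $\langle \sum \phi(a_i) \otimes P_F d_i P_F \cdot u', v' \rangle$, and identifying $P_F \B(\K) P_F \cong M_n$ the operator inside is $(\phi \otimes \id_{M_n})(x')$ with $x' = \sum a_{i} \otimes P_F d_i P_F$. Since $\|x'\| = \|(1 \otimes P_F) x (1 \otimes P_F)\| \leq \|x\|$, the first part bounds the whole expression by $\|\phi\|_{\cb} \|x\| + O(\epsilon)$; letting $\epsilon \to 0$ and passing to the closure in $A \otimes D$ completes the argument.

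The main obstacle is the compression step in the second part: one must control how much the matrix coefficient $\langle (\phi \otimes \id_{D})(x) u, v\rangle$ moves under $u \mapsto (1 \otimes P_F) u$ and similarly for $v$, which requires careful bookkeeping across the distinct Hilbert spaces underlying $A$, $B$, and $\K$, together with the observation $\|(1 \otimes P_F) y (1 \otimes P_F)\| \leq \|y\|$ applied to both $y = x$ and $y = (\phi \otimes \id_{D})(x)$. The first part, by contrast, is essentially a density argument built on the fact that finite-rank operators are dense in $\Cpt(\H)$, and should be straightforward once the completely isometric identification $M_n \cong p_n \Cpt(\H) p_n$ is established.
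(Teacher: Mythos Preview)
The paper does not prove this lemma: it is stated as a standard fact and closed with \qed\ immediately after the statement, with no argument given. So there is nothing to compare against on the paper's side.

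Your sketch is correct and is the standard argument. A couple of small comments. In the first part, your density step is fine; alternatively one can observe that $\Cpt(\H) = \overline{\cup_n p_n \Cpt(\H) p_n}$ directly, so that $A \otimes \Cpt(\H) = \overline{\cup_n A \otimes M_n}$ and the sup over $n$ computes the norm of $\phi \otimes \id_{\Cpt(\H)}$ without passing through $A \odot \Cpt(\H)$ explicitly. In the second part, the bookkeeping you flag is genuine but routine: with $B \subseteq \B(\H_B)$ and $u,v \in \H_B \otimes \K$, the approximation $(1_{\H_B} \otimes P_F)u \to u$ holds because elementary tensors are dense in $\H_B \otimes \K$, and the two compression inequalities $\|(1 \otimes P_F) y (1 \otimes P_F)\| \le \|y\|$ are applied in $\B(\H_A \otimes \K)$ (for $y = x$) and $\B(\H_B \otimes \K)$ (for $y = (\phi \otimes \id_D)(x)$) respectively. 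The identification of $P_F \B(\K) P_F$ with $M_n$ is completely isometric, so the norm of $x' = \sum a_i \otimes P_F d_i P_F$ in $A \otimes M_n$ agrees with its norm as an operator on $\H_A \otimes F$, which is exactly what you need. With these points made explicit, the argument goes through cleanly.
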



The following extension theorem of Wittstock says that $\B(\H)$ is an injective object in the category of operator spaces and completely contractive maps.
\begin{thm}[(Wittstock)] Let $S \subseteq A$ be operator spaces and let $\phi\colon S \to \B(\H)$ be a completely bounded map. Then there exists a completely bounded extension $\psi\colon A \to \B(\H)$ with $\|\psi\|_{\cb} = \|\phi\|_{\cb}$.
\qed
\end{thm}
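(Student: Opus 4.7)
The plan is to reduce Wittstock's extension theorem to \emph{Arveson's extension theorem} for completely positive maps, via \emph{Paulsen's off-diagonal trick}. The equality $\|\psi\|_{\cb} = \|\phi\|_{\cb}$ will then follow from the trivial inequality $\|\psi\|_{\cb} \ge \|\phi\|_{\cb}$ together with the reverse bound extracted from the construction.

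After rescaling, I may assume $\|\phi\|_{\cb} \le 1$. Fixing an ambient representation $S \subseteq A \subseteq \B(\H_{0})$, I form the unital operator system
$$S_{\phi} \coloneqq \left\{ \begin{pmatrix} \lambda I & s \\ t^{*} & \mu I \end{pmatrix} : \lambda, \mu \in \C, \ s, t \in S \right\} \subseteq M_{2}(\B(\H_{0})),$$
and define $\tilde{\phi}\colon S_{\phi} \to M_{2}(\B(\H))$ by
$$\tilde{\phi}\begin{pmatrix} \lambda I & s \\ t^{*} & \mu I \end{pmatrix} \coloneqq \begin{pmatrix} \lambda I_{\H} & \phi(s) \\ \phi(t)^{*} & \mu I_{\H} \end{pmatrix}.$$
The key matrix calculation, performed at each amplification level via the block-positivity criterion for $2\times 2$ operator matrices, shows that $\tilde{\phi}$ is completely positive precisely when $\phi$ is completely contractive.

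Applying Arveson's extension theorem to $\tilde{\phi}$, viewed as a CP map from the unital operator system $S_{\phi}$ into the injective \cast-algebra $M_{2}(\B(\H)) = \B(\H \oplus \H)$, yields a completely positive extension $\tilde{\psi}\colon M_{2}(\B(\H_{0})) \to M_{2}(\B(\H))$. Reading off the $(1,2)$-corner and restricting to $A$ produces the desired map $\psi\colon A \to \B(\H)$, and the reverse Paulsen equivalence applied to $\tilde{\psi}|_{M_{2}(A)}$ forces $\|\psi\|_{\cb} \le 1$.

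The chief obstacle is Arveson's extension theorem itself. A standard proof reduces, via a weak$^{*}$-compactness argument on the unit ball of CP maps into $\B(\H)$ and passage to a suitable ultralimit, to the case of a finite-dimensional target $M_{n}$. For such $n$, completely positive maps $A \to M_{n}$ are identified with positive linear functionals on $M_{n}(A)$ through the canonical trace pairing, and the extension problem becomes the Hahn-Banach extension of a positive functional from an operator subsystem to the ambient unital \cast-algebra; this succeeds because the positive cone of self-adjoint elements has nonempty interior at the unit, so norm-controlled extensions exist and automatically remain positive.
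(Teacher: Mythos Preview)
Your outline is correct and follows the standard route to Wittstock's theorem: Paulsen's off-diagonal system to convert completely contractive into unital completely positive, Arveson's extension theorem for the CP map, then the reverse Paulsen equivalence to recover the cb-norm bound on the corner. The sketch of Arveson (reduction to $M_n$ via weak$^*$-compactness, then the identification of CP maps into $M_n$ with positive functionals on $M_n(A)$ and a Krein--Hahn--Banach extension) is also accurate.

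One small point worth making explicit: after extending to $\tilde{\psi}\colon M_2(\B(\H_0)) \to M_2(\B(\H))$, you should note that the diagonal idempotents $\left(\begin{smallmatrix} I & 0 \\ 0 & 0 \end{smallmatrix}\right)$ and $\left(\begin{smallmatrix} 0 & 0 \\ 0 & I \end{smallmatrix}\right)$ lie in $S_\phi$ and are fixed by $\tilde{\psi}$; a multiplicative-domain argument then forces $\tilde{\psi}$ to respect the $2\times 2$ block decomposition, so the $(1,2)$-corner really is a well-defined linear map $\psi$ on all of $\B(\H_0)$ (hence on $A$), and the positivity of $\tilde{\psi}\left(\begin{smallmatrix} I & a \\ a^* & I \end{smallmatrix}\right)$ at each matrix level gives $\|\psi\|_{\cb}\le 1$ directly.

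As for comparison with the paper: there is nothing to compare. The paper simply states Wittstock's theorem as background with a \qed and no argument, treating it as a classical black box (with implicit reference to the operator-space texts cited just before). Your write-up supplies what the paper deliberately omits.
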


\subsection{The Fubini Product}\label{sub fubini product}
\begin{defn}[({cf.\ \cite{MR0397427}, \cite{MR0410402}})] Let $S \subseteq A$ and $T \subseteq B$ be operator spaces. The {\em Fubini product} $F(S, T, A \otimes B)$ of $S$ and $T$ 
is defined as the set of all $x \in A \otimes B$ such that $(\phi \otimes \id_{B})(x) \in T$ for all $\phi \in A^{*}$ and $(\id_{A} \otimes \psi)(x) \in S$ for all $\psi \in B^{*}$. 

\end{defn}

\begin{rem} 
We  have 
	\begin{equation*}
	S \otimes T \subseteq F(S, T, A \otimes B) \subseteq A \otimes B.
	\end{equation*}
\end{rem}
\begin{lem}
Let $K, L$ be subspaces of $A^*, B^*$ respectively such that  the closed unit ball of $K$ and $L$ are weak-$*$-dense in the unit balls of $A^*$ and $B^*$ respectively. Then for any $S \subseteq A, T \subseteq B$ operator spaces the Fubini product  $F(S, T, A \otimes B)$ equals the set
\begin{equation*}
\{ x \in A \otimes B \mid (\phi \otimes \id_B )(x) \in T \textup{ and } (\id_A \otimes \psi) (x) \in S \textup{ for all } \phi \in K, \psi \in L \}.
\end{equation*}
\end{lem}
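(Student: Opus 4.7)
The inclusion from left to right is immediate since $K \subseteq A^{*}$ and $L \subseteq B^{*}$: any $x$ satisfying the slice conditions for all functionals in $A^{*}$ and $B^{*}$ trivially satisfies them for the smaller classes $K$ and $L$.

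For the converse, let $x$ lie in the right-hand side. By symmetry between the two slice conditions, it suffices to show that $(\phi \otimes \id_{B})(x) \in T$ for every $\phi \in A^{*}$. Since $T$ is a closed linear subspace of $B$ and the map $\phi \mapsto (\phi \otimes \id_{B})(x)$ is linear in $\phi$, a rescaling argument reduces this to the case $\|\phi\| \le 1$. By hypothesis, one can choose a net $(\phi_{\alpha}) \subseteq K$ with $\|\phi_{\alpha}\| \le 1$ and $\phi_{\alpha} \to \phi$ in the weak-$*$ topology; each element $(\phi_{\alpha} \otimes \id_{B})(x)$ lies in $T$ by assumption, so since $T$ is norm-closed, it suffices to establish the norm convergence $(\phi_{\alpha} \otimes \id_{B})(x) \to (\phi \otimes \id_{B})(x)$ in $B$.

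To prove this convergence, I would approximate $x$ in norm by finite sums $x_{n} = \sum_{i=1}^{k_{n}} a_{n,i} \otimes b_{n,i} \in A \odot B$. Using the standard bound $\|\eta \otimes \id_{B}\| \le \|\eta\|$ for scalar functionals $\eta \in A^{*}$, the triangle inequality yields
\[
\|(\phi_{\alpha} \otimes \id_{B})(x) - (\phi \otimes \id_{B})(x)\| \le 2\|x - x_{n}\| + \Bigl\|\sum_{i=1}^{k_{n}} (\phi_{\alpha} - \phi)(a_{n,i})\, b_{n,i}\Bigr\|.
\]
For each fixed $n$, the second summand tends to $0$ as $\alpha$ varies, because weak-$*$ convergence is pointwise convergence on the finite set $\{a_{n,i}\}_{i}$; a standard $\varepsilon/2$ argument---first choose $n$ so that $2\|x-x_{n}\| < \varepsilon/2$, then take $\alpha$ large enough to force the remaining term below $\varepsilon/2$---completes the proof. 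The only substantive point, and hence the main (non-)obstacle, is the uniform slice-map bound $\|\phi_{\alpha} \otimes \id_{B}\| \le 1$, which is what permits transferring the $A \odot B$ approximation across the net; beyond this the argument is a routine combination of weak-$*$ density with norm-closedness of $T$.
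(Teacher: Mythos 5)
Your argument is correct and follows essentially the same route as the paper's proof: pick a bounded net in $K$ converging weak-$*$ to $\phi$, observe convergence of the slices on $A \odot B$, and transfer to all of $A \otimes B$ via the uniform bound $\|\phi_{\alpha} \otimes \id_{B}\| \le \|\phi_{\alpha}\|_{\cb} = \|\phi_{\alpha}\|$ together with an $\varepsilon/3$-approximation, then conclude by norm-closedness of $T$. The explicit rescaling to the unit ball and the finite-sum approximation are just slightly more detailed renderings of the same steps the paper carries out.
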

The assumptions are for instance satisfied if $K$ and $L$ are the set of normal linear functionals in faithful representations of $A$ and $B$. 
\begin{proof}
Suppose that $ (\phi \otimes \id_B )(x) \in T $ for all $\phi \in L$. We need to show that $ (\phi \otimes \id_B )(x) \in T $ for all $\phi \in A^*$. Let $\phi \in A^*$ and let $(\phi_n) \subseteq L$ be a bounded sequence (or net) converging pointwise (i.e. in the weak-$*$-sense) to $\phi$. Then $ (\phi_n \otimes \id_B )(z) \to   (\phi \otimes \id_B )(z) $ for all $ z\in  A \odot B$ and using norm boundedness of $ \phi_n \otimes \id_B $, an $\varepsilon/3$-argument shows that the same holds for $z \in A \otimes B$. Since $ (\phi_n \otimes \id_B )(x) \in T $ by assumption we conclude that $ (\phi \otimes \id_B )(x) \in T$. 
\end{proof}

\begin{defn}
We say that $(S, T, A \otimes B)$ has the {\em slice map property} if 
	\begin{equation*}
	F(S, T, A \otimes B) = S \otimes T.
	\end{equation*}
\end{defn}

\subsection{Functoriality}

\begin{lem} For $i = 1, 2$, let $S_{i} \subseteq A_{i}$ and $T_{i} \subseteq B_{i}$ be operator spaces. Suppose that completely bounded maps $\sigma\colon A_{1} \to A_{2}$ and $ \tau \colon B_{1} \to B_{2}$ satisfy $\sigma(S_{1}) \subseteq S_{2}$ and $\tau(T_{1}) \subseteq T_{2}$. Then
	\begin{equation*}
	(\sigma \otimes \tau)(F(S_{1}, T_{1}, A_{1} \otimes B_{1})) \subseteq F(S_{2}, T_{2}, A_{2} \otimes B_{2}).
	\end{equation*}
\end{lem}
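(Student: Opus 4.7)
The plan is to check the defining property of the Fubini product directly by pushing functionals through the maps $\sigma$ and $\tau$. Fix $x \in F(S_{1}, T_{1}, A_{1} \otimes B_{1})$ and set $y \coloneqq (\sigma \otimes \tau)(x) \in A_{2} \otimes B_{2}$. I need to show that for every $\phi \in A_{2}^{*}$ we have $(\phi \otimes \id_{B_{2}})(y) \in T_{2}$, and symmetrically that $(\id_{A_{2}} \otimes \psi)(y) \in S_{2}$ for every $\psi \in B_{2}^{*}$.

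The key observation is the naturality identity
\begin{equation*}
(\phi \otimes \id_{B_{2}}) \circ (\sigma \otimes \tau) = \tau \circ ((\phi \circ \sigma) \otimes \id_{B_{1}}),
\end{equation*}
as maps $A_{1} \otimes B_{1} \to B_{2}$. I would first verify this identity on elementary tensors $a \otimes b \in A_{1} \odot B_{1}$, where both sides equal $\phi(\sigma(a))\tau(b)$; since all maps involved are (completely) bounded and hence continuous, the identity extends to the norm closure $A_{1} \otimes B_{1}$.

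Now $\phi \circ \sigma$ is a bounded linear functional on $A_{1}$, so by the hypothesis that $x \in F(S_{1}, T_{1}, A_{1} \otimes B_{1})$ we have $((\phi \circ \sigma) \otimes \id_{B_{1}})(x) \in T_{1}$. Applying $\tau$ and using $\tau(T_{1}) \subseteq T_{2}$ (together with the fact that $T_{2}$ is closed, so $\tau$ preserves the closure of $T_{1}$) yields $(\phi \otimes \id_{B_{2}})(y) \in T_{2}$. The symmetric computation with $\psi \in B_{2}^{*}$, using $\sigma(S_{1}) \subseteq S_{2}$, gives $(\id_{A_{2}} \otimes \psi)(y) \in S_{2}$, so $y \in F(S_{2}, T_{2}, A_{2} \otimes B_{2})$.

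There is really no serious obstacle here; the statement is essentially a naturality check. The only small subtlety is that one must know that $\sigma \otimes \tau$ is well-defined as a bounded map $A_{1} \otimes B_{1} \to A_{2} \otimes B_{2}$, which is guaranteed by complete boundedness of $\sigma$ and $\tau$, and that $\tau(T_{1}) \subseteq T_{2}$ implies $\tau$ carries the closed subspace $T_{1}$ into the closed subspace $T_{2}$ (which is automatic from continuity and the closure of $T_{2}$).
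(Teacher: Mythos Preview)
Your proof is correct and follows essentially the same approach as the paper: pull back a functional $\phi \in A_{2}^{*}$ to $\phi \circ \sigma \in A_{1}^{*}$, use the naturality identity $(\phi \otimes \id_{B_{2}})\circ(\sigma \otimes \tau) = \tau \circ ((\phi\circ\sigma)\otimes \id_{B_{1}})$, and apply the hypotheses $x \in F(S_{1},T_{1},A_{1}\otimes B_{1})$ and $\tau(T_{1})\subseteq T_{2}$. The only difference is that you spell out the verification of the naturality identity on elementary tensors and the well-definedness of $\sigma\otimes\tau$, whereas the paper takes these for granted; your remark about closures is unnecessary since $\tau(T_{1})\subseteq T_{2}$ is assumed directly.
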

\begin{proof} Let $x \in F(S_{1}, T_{1}, A_{1} \otimes B_{1})$. Let $\phi_{2} \in A_{2}^{*}$ and let $\phi_{1} \coloneqq \phi_{2} \circ \sigma \in A_{1}^{*}$. Then $(\phi_{1} \otimes \id_{B_{1}})(x) \in T_{1}$, thus
	\begin{align*}
	(\phi_{2} \otimes \id_{B_{2}})[(\sigma \otimes \tau) (x)] 
	&= \tau [(\phi_{1} \otimes \id_{B_{1}})(x)]
	\end{align*}
belongs to $T_{2}$. Similarly for $\psi_{2} \in B_{2}^{*}$, we have $(\id_{A_{2}} \otimes \psi_{2})[(\sigma \otimes \tau)(x)] \in S_{2}$. Thus $(\sigma \otimes \tau)(x) \in F(S_{2}, T_{2}, A_{2} \otimes B_{2})$.
\end{proof}

\begin{lem} Let $S \subseteq A$ and $T \subseteq B$ be operator spaces. Let $\sigma \colon A \to A$ and $\tau\colon B \to B$ be completely bounded maps. If $\sigma$ restricts to the identity on $S$ and $\tau$ on $T$, then $\sigma \otimes \tau$ restricts to the identity on $F(S, T, A \otimes B)$.
\end{lem}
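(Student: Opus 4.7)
The plan is to factor $\sigma \otimes \tau = (\sigma \otimes \id_{B}) \circ (\id_{A} \otimes \tau)$ on $A \otimes B$ (the two sides agree on elementary tensors and are bounded, hence coincide on the whole tensor product) and to show that each factor already restricts to the identity on $F(S, T, A \otimes B)$. Composing the two resulting identities then yields $(\sigma \otimes \tau)(x) = x$ for every $x \in F(S, T, A \otimes B)$, which is the claim.

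For the first factor, I would fix $x \in F(S, T, A \otimes B)$ and an arbitrary $\phi \in A^{*}$. The key observation is the slice-map identity
\begin{equation*}
(\phi \otimes \id_{B}) \circ (\id_{A} \otimes \tau) = \tau \circ (\phi \otimes \id_{B}),
\end{equation*}
which is immediate on elementary tensors and extends by continuity. Since $(\phi \otimes \id_{B})(x) \in T$ by the definition of the Fubini product and $\tau|_{T} = \id_{T}$ by hypothesis, applying this identity to $x$ gives $(\phi \otimes \id_{B})\bigl((\id_{A} \otimes \tau)(x)\bigr) = (\phi \otimes \id_{B})(x)$. Pairing with an arbitrary $\psi \in B^{*}$ then yields $(\phi \otimes \psi)\bigl((\id_{A} \otimes \tau)(x) - x\bigr) = 0$ for all $\phi \in A^{*}$ and $\psi \in B^{*}$.

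Product functionals $\phi \otimes \psi$ separate points of $A \otimes B \subseteq \B(\H \otimes \K)$ (for instance, vector functionals coming from simple tensors $\xi \otimes \eta \in \H \otimes \K$ already suffice, as such tensors are total), so this forces $(\id_{A} \otimes \tau)(x) = x$. The symmetric argument, now using $(\id_{A} \otimes \psi)(x) \in S$ for $\psi \in B^{*}$ together with $\sigma|_{S} = \id_{S}$, shows that $\sigma \otimes \id_{B}$ also restricts to the identity on $F(S, T, A \otimes B)$, and composing the two identities finishes the proof.

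I do not expect any real obstacle: the argument is essentially a formal manipulation of slice-map identities together with the defining property of the Fubini product and the hypothesis that $\sigma$ and $\tau$ fix $S$ and $T$ pointwise. The step that deserves the most attention is the separation-of-points remark, but this is standard for the spatial tensor product.
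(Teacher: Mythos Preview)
Your proof is correct and essentially the same as the paper's: both arguments show that $(\phi \otimes \psi)\bigl((\sigma \otimes \tau)(x) - x\bigr) = 0$ for all $\phi \in A^{*}$, $\psi \in B^{*}$ via the slice-map identities and the hypotheses $\sigma|_{S} = \id_{S}$, $\tau|_{T} = \id_{T}$, then invoke separation of points by product functionals. The only cosmetic difference is that the paper does this in a single chain of equalities for $\sigma \otimes \tau$ directly, whereas you factor through $(\sigma \otimes \id_{B}) \circ (\id_{A} \otimes \tau)$ and handle each factor separately.
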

\begin{proof} For $x \in F(S, T, A \otimes B)$ and $\phi \in A^{*}$ and $\psi \in B^{*}$, we have
	\begin{align*}
	\bk{\phi \otimes \psi, (\sigma \otimes \tau)(x)} &= \bk{\phi, \sigma[(\id_{A} \otimes (\psi \circ\tau))(x)]}\\
	&=\bk{\phi, (\id_{A} \otimes (\psi \circ\tau))(x)}\\
	&= \bk{\psi, \tau[(\phi \otimes \id_{B})(x)]}\\
	&= \bk{\phi \otimes \psi, x}.
	\end{align*}
Thus $(\sigma \otimes \tau)(x) = x$.	
\end{proof}

\begin{cor}[({cf.\ \cite[Lemma 2]{MR567831}})]
For $i = 1, 2$, let $S \subseteq A_{i}$ and $T \subseteq B_{i}$ be operator spaces. Suppose that completely bounded maps $\sigma_{1}\colon A_{1} \to A_{2}$, $\sigma_{2}\colon A_{2} \to A_{1}$ and $\tau_{1} \colon B_{1} \to B_{2}$, $\tau_{2} \colon B_{2} \to B_{1}$ satisfy, for $i = 1, 2$, $\sigma_{i}(s) = s$ for $s \in S$ and $\tau_{i}(t) = t$ for $t \in T$, then $\sigma_{1} \otimes \tau_{1}$ restricts to an isomorphism 
	\begin{equation*}
	F(S, T, A_{1} \otimes B_{1}) \cong F(S, T, A_{2} \otimes B_{2}).
	\end{equation*}
\qed	
\end{cor}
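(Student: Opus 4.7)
The plan is to deduce this directly from the two preceding lemmas, since the hypotheses have been arranged precisely to feed into them. The target map is $\sigma_1 \otimes \tau_1$, and the idea is to use $\sigma_2 \otimes \tau_2$ to build an inverse on the level of Fubini products.

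First, I would invoke the functoriality lemma. Since $\sigma_1$ restricts to the identity on $S$, we have $\sigma_1(S) \subseteq S$, and similarly $\tau_1(T) \subseteq T$. Hence $\sigma_1 \otimes \tau_1$ sends $F(S, T, A_1 \otimes B_1)$ into $F(S, T, A_2 \otimes B_2)$. Symmetrically, $\sigma_2 \otimes \tau_2$ sends $F(S, T, A_2 \otimes B_2)$ into $F(S, T, A_1 \otimes B_1)$.

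Next, I would consider the compositions. The maps $\sigma_2 \circ \sigma_1 \colon A_1 \to A_1$ and $\tau_2 \circ \tau_1 \colon B_1 \to B_1$ are completely bounded (composition of completely bounded maps) and restrict to the identity on $S$ and $T$ respectively, by hypothesis. Applying the previous lemma to this pair yields that
\begin{equation*}
(\sigma_2 \otimes \tau_2) \circ (\sigma_1 \otimes \tau_1) = (\sigma_2 \circ \sigma_1) \otimes (\tau_2 \circ \tau_1)
\end{equation*}
restricts to the identity on $F(S, T, A_1 \otimes B_1)$. The analogous argument with the roles of the indices swapped shows that $(\sigma_1 \otimes \tau_1) \circ (\sigma_2 \otimes \tau_2)$ restricts to the identity on $F(S, T, A_2 \otimes B_2)$.

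These two facts together mean that $\sigma_1 \otimes \tau_1$ and $\sigma_2 \otimes \tau_2$ are mutually inverse bijections between $F(S, T, A_1 \otimes B_1)$ and $F(S, T, A_2 \otimes B_2)$, both completely bounded, so we obtain the desired isomorphism. There is no real obstacle here; the only small subtlety is recording that "restricts to the identity on $S$" for both $\sigma_1$ and $\sigma_2$ is exactly what makes the composition $\sigma_2 \circ \sigma_1$ the identity on $S$, which is the input needed for the previous lemma.
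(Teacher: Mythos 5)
Your proposal is correct and is exactly the argument the paper intends: the corollary is stated with no written proof precisely because it follows by combining the functoriality lemma (to get the two maps between the Fubini products) with the preceding lemma applied to $\sigma_2\circ\sigma_1$, $\tau_2\circ\tau_1$ and to $\sigma_1\circ\sigma_2$, $\tau_1\circ\tau_2$ (to see the compositions are the identity). No gaps.
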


\begin{cor}[({cf.\ \cite[Proposition 3.7]{MR0397427}})] Let $S \subseteq A$ and $T \subseteq B$ be operator spaces. If there exist completely bounded  projections $A \to S$ and $B \to T$, then $(S, T, A \otimes B)$ has the slice map property:
	\begin{equation*}
	F(S, T, A \otimes B) = S \otimes T. 
	\end{equation*}
\qed
\end{cor}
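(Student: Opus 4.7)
The plan is to combine the previous lemma (that $\sigma \otimes \tau$ acts as the identity on $F(S,T,A \otimes B)$ whenever $\sigma|_S = \id_S$ and $\tau|_T = \id_T$) with the trivial observation that a tensor product of projections lands in the tensor product of the ranges.

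In detail: write $p \colon A \to A$ and $q \colon B \to B$ for the given completely bounded projections, viewed as maps to the ambient spaces. By the previous lemma applied to $\sigma = p$ and $\tau = q$, the extension $p \otimes q \colon A \otimes B \to A \otimes B$ restricts to the identity on $F(S, T, A \otimes B)$. So for any $x \in F(S,T,A \otimes B)$ we have $x = (p \otimes q)(x)$.

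Next I would verify that $p \otimes q$ takes values in $S \otimes T$. This is immediate on the algebraic tensor product $A \odot B$, since $(p \otimes q)(a \otimes b) = p(a) \otimes q(b) \in S \odot T$, and it passes to the norm closure because $p \otimes q$ is bounded (as a tensor product of completely bounded maps) and $S \otimes T$ is by definition closed. Combining this with the previous step, every $x \in F(S,T,A\otimes B)$ lies in $S \otimes T$, which together with the already-observed inclusion $S \otimes T \subseteq F(S,T,A \otimes B)$ yields the slice map property.

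There isn't really a main obstacle here; the work was done in the two preceding lemmas, and the corollary is essentially a packaging statement. The only subtlety worth flagging in the write-up is the boundedness needed to pass $p \otimes q$ from the algebraic tensor product to the completed one, which follows from complete boundedness of $p$ and $q$.
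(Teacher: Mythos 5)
Your argument is correct and is exactly the intended one: the paper leaves this corollary as an immediate consequence of the preceding lemma (that $\sigma\otimes\tau$ fixes $F(S,T,A\otimes B)$ pointwise when $\sigma|_S=\id_S$, $\tau|_T=\id_T$), applied with $\sigma=p$, $\tau=q$, together with the observation that $p\otimes q$ maps $A\otimes B$ into $S\otimes T$. Your flagging of the continuity step needed to pass from $A\odot B$ to the completion is the right (and only) point of care.
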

\subsection{Intersections}
\begin{lem}\label{lem int} For operator spaces $S \subseteq A$ and $T \subseteq B$, we have
	\begin{equation*}
	F(S, T, A \otimes B) = F(A, T, A \otimes B) \cap F(S, B, A \otimes B).	
	\end{equation*}
More generally, for families of operator spaces $\{S_{\alpha} \subseteq A\}$ and $\{T_{\beta} \subseteq B\}$, we have
	\begin{equation*}
	F(\cap_{\alpha} S_{\alpha}, \cap_{\beta} T_{\beta}, A \otimes B) = \cap_{\alpha, \beta} F(S_{\alpha}, T_{\beta}, A \otimes B).
	\end{equation*}	
\end{lem}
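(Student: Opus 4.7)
The plan is to prove both statements directly from the definition of the Fubini product, observing that when one of the two slice conditions is trivial, the Fubini product reduces to imposing only the other.

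First I would handle the first equality. Unwinding the definition, $F(A,T,A\otimes B)$ consists of those $x \in A \otimes B$ satisfying $(\phi \otimes \id_B)(x) \in T$ for all $\phi \in A^*$ together with $(\id_A \otimes \psi)(x) \in A$ for all $\psi \in B^*$; but the latter condition is automatic since $(\id_A \otimes \psi)$ maps $A \otimes B$ into $A$ (one checks this first on the algebraic tensor product $A \odot B$ and then passes to the closure using that $\id_A \otimes \psi$ is completely bounded). Hence
\begin{equation*}
F(A,T,A\otimes B) = \{x \in A \otimes B \mid (\phi \otimes \id_B)(x) \in T \text{ for all } \phi \in A^*\},
\end{equation*}
and symmetrically
\begin{equation*}
F(S,B,A\otimes B) = \{x \in A \otimes B \mid (\id_A \otimes \psi)(x) \in S \text{ for all } \psi \in B^*\}.
\end{equation*}
The intersection of these two sets is exactly the defining set of $F(S,T,A\otimes B)$, giving the first claim.

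For the general statement, I would prove both inclusions by direct verification. If $x \in F(\cap_\alpha S_\alpha, \cap_\beta T_\beta, A\otimes B)$, then for any $\phi \in A^*$ we have $(\phi \otimes \id_B)(x) \in \cap_\beta T_\beta \subseteq T_\beta$ for each $\beta$, and similarly $(\id_A \otimes \psi)(x) \in \cap_\alpha S_\alpha \subseteq S_\alpha$ for each $\alpha$; hence $x$ lies in every $F(S_\alpha, T_\beta, A \otimes B)$. Conversely, if $x$ belongs to every $F(S_\alpha, T_\beta, A \otimes B)$, then for each $\phi \in A^*$ the element $(\phi \otimes \id_B)(x)$ lies in $T_\beta$ for every $\beta$, hence in $\cap_\beta T_\beta$; and analogously $(\id_A \otimes \psi)(x) \in \cap_\alpha S_\alpha$ for every $\psi \in B^*$. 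This places $x$ in $F(\cap_\alpha S_\alpha, \cap_\beta T_\beta, A\otimes B)$.

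There is no real obstacle here: the lemma is a routine consequence of the definition, with the only nontrivial observation being the automatic nature of the condition $(\id_A \otimes \psi)(x) \in A$, which justifies the reformulation of $F(A,T,A\otimes B)$ used in the first equality. The more general statement can also be obtained by iterating the first equality, but the direct argument above is cleaner and avoids dealing with infinite families inductively.
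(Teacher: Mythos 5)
Your proof is correct and is exactly the direct verification from the definition that the paper intends by its one-line proof (``Clear from the definitions''). The observation that the slice condition into the ambient space $A$ (resp.\ $B$) is automatic is the right justification and needs no further elaboration.
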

\begin{proof} Clear from the definitions.
\end{proof}

\begin{cor}[({cf.\ \cite[Corollary 5]{MR0410402}})] Let $S_{1}$, $S_{2} \subseteq A$ and $T_{1}$, $T_{2} \subseteq B$ be operator spaces. If $(S_{1} \cap S_{2}, T_{1} \cap T_{2}, A \otimes B)$ has the slice map property, then
	\begin{equation*}
	(S_{1} \cap S_{2}) \otimes (T_{1} \cap T_{2}) = (S_{1} \otimes T_{1}) \cap (S_{2} \otimes T_{2}).
	\end{equation*}	
\end{cor}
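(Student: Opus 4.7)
The forward containment $(S_1 \cap S_2) \otimes (T_1 \cap T_2) \subseteq (S_1 \otimes T_1) \cap (S_2 \otimes T_2)$ is immediate from the monotonicity of $\otimes$ under inclusion of operator spaces, so the plan is to concentrate on the reverse containment. Using the hypothesis that $(S_1 \cap S_2, T_1 \cap T_2, A \otimes B)$ has the slice map property, it suffices to establish
\begin{equation*}
(S_1 \otimes T_1) \cap (S_2 \otimes T_2) \subseteq F(S_1 \cap S_2, T_1 \cap T_2, A \otimes B),
\end{equation*}
since the right side equals $(S_1 \cap S_2) \otimes (T_1 \cap T_2)$ by assumption.

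To handle the Fubini product on the right, I would apply the generalized intersection formula of Lemma \ref{lem int} to rewrite
\begin{equation*}
F(S_1 \cap S_2, T_1 \cap T_2, A \otimes B) = \bigcap_{i, j \in \{1,2\}} F(S_i, T_j, A \otimes B).
\end{equation*}
Thus for $x \in (S_1 \otimes T_1) \cap (S_2 \otimes T_2)$ the task reduces to checking membership $x \in F(S_i, T_j, A \otimes B)$ for each of the four pairs $(i, j)$.

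The key observation is that membership in the simple tensor product already controls both slices: if $y \in S_k \otimes T_k$, then for any $\phi \in A^*$ one has $(\phi \otimes \id_B)(y) \in T_k$, and for any $\psi \in B^*$ one has $(\id_A \otimes \psi)(y) \in S_k$ (true on the algebraic tensor $S_k \odot T_k$ and preserved in the norm closure by continuity of the slice maps together with closedness of $S_k$ and $T_k$). Applying this with $k = 2$ gives the first-slot conclusion $(\phi \otimes \id_B)(x) \in T_2$, while applying it with $k = 1$ gives $(\id_A \otimes \psi)(x) \in S_1$; hence $x \in F(S_1, T_2, A \otimes B)$. The remaining three combinations are obtained by swapping the roles of indices $1$ and $2$ in exactly the same way.

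I do not anticipate any serious obstacle: the argument is a clean assembly of Lemma \ref{lem int}, the trivial inclusion $S \otimes T \subseteq F(S, T, A \otimes B)$, and the assumed slice map property. The only mildly subtle point worth writing out carefully is the slice behaviour for elements of the norm-completed tensor product $S_k \otimes T_k$, but this is a routine $\varepsilon$-approximation argument using that $\phi \otimes \id_B$ and $\id_A \otimes \psi$ are bounded and that $S_k$, $T_k$ are closed.
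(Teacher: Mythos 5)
Your proposal is correct and follows essentially the same route as the paper: both arguments rest on the trivial inclusion $S\otimes T\subseteq F(S,T,A\otimes B)$, the intersection formula of Lemma~\ref{lem int}, and the hypothesis collapsing $F(S_1\cap S_2,T_1\cap T_2,A\otimes B)$ back to $(S_1\cap S_2)\otimes(T_1\cap T_2)$. The only cosmetic difference is that you verify membership in all four $F(S_i,T_j,A\otimes B)$, whereas the paper passes through the two diagonal terms $F(S_1,T_1,A\otimes B)\cap F(S_2,T_2,A\otimes B)$, which already equal the full intersection.
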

\begin{proof} We have 
	\begin{align*}
	(S_{1} \cap S_{2}) \otimes (T_{1} \cap T_{2}) &\subseteq (S_{1} \otimes T_{1}) \cap (S_{2} \otimes T_{2})\\
	&\subseteq F(S_{1}, T_{1}, A \otimes B) \cap F(S_{2}, T_{2}, A \otimes B)\\
	&=  F(S_{1} \cap S_{2}, T_{1} \cap T_{2}, A \otimes B).
	\end{align*}
\end{proof}

\subsection{Kernels}
\begin{prop}\label{prop ker} Let $A$, $B$ and $D$ be operator spaces and let $\tau\colon B \to D$ be a completely bounded map. Then
	\begin{equation*}
	F(A, \ker(\tau), A \otimes B) = \ker (\id_{A} \otimes \tau).
	\end{equation*}
\end{prop}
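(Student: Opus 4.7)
The plan is to reduce the statement to the standard slice-functional separation property of the spatial tensor product.

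First I would observe that in the definition of $F(A, \ker(\tau), A \otimes B)$, the condition $(\id_{A} \otimes \psi)(x) \in A$ is automatic, so membership in the Fubini product reduces to requiring $\tau\bigl((\phi \otimes \id_{B})(x)\bigr) = 0$ for every $\phi \in A^{*}$. Next I would use the identity
\begin{equation*}
\tau \circ (\phi \otimes \id_{B}) = (\phi \otimes \id_{D}) \circ (\id_{A} \otimes \tau),
\end{equation*}
which is immediate on elementary tensors and extends by continuity, to rewrite this as $(\phi \otimes \id_{D})(y) = 0$ for all $\phi \in A^{*}$, where $y \coloneqq (\id_{A} \otimes \tau)(x) \in A \otimes D$. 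The inclusion $\ker(\id_{A} \otimes \tau) \subseteq F(A, \ker(\tau), A \otimes B)$ then follows at once.

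For the reverse inclusion I would need to show that if $y \in A \otimes D$ satisfies $(\phi \otimes \id_{D})(y) = 0$ for every $\phi \in A^{*}$, then $y = 0$. Composing with an arbitrary $\psi \in D^{*}$ gives $(\phi \otimes \psi)(y) = 0$ for all $\phi \in A^{*}$ and $\psi \in D^{*}$, so the job reduces to showing that functionals of this product form separate points of $A \otimes D$. Realizing $A \subseteq \B(\H)$ and $D \subseteq \B(\K)$ for suitable Hilbert spaces, I would specialize $\phi$ and $\psi$ to restrictions of vector functionals $\omega_{\xi,\xi'}\colon a \mapsto \bk{a\xi,\xi'}$ and $\omega_{\eta,\eta'}\colon d \mapsto \bk{d\eta,\eta'}$. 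Checking on elementary tensors and extending by continuity, one sees that $(\omega_{\xi,\xi'} \otimes \omega_{\eta,\eta'})(y) = \bk{y(\xi \otimes \eta),\xi'\otimes \eta'}$ for all $y \in A \otimes D$, and totality of simple tensors in $\H \otimes \K$ forces $y = 0$.

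The one genuinely non-formal step is this final separation argument; everything preceding it is routine bookkeeping with slice maps. The key point is that the spatial tensor product is characterised by having enough product functionals to detect zero, and that is precisely what makes the Fubini product of a kernel coincide with the kernel of the tensored map.
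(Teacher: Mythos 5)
Your proposal is correct and follows essentially the same route as the paper: the whole argument rests on the intertwining identity $\tau \circ (\phi \otimes \id_{B}) = (\phi \otimes \id_{D}) \circ (\id_{A} \otimes \tau)$, which is exactly the paper's one-line proof. The only difference is that you spell out the separation step (product functionals, via vector states, detect zero in the spatial tensor product), which the paper leaves implicit in its ``iff''; that added detail is accurate and harmless.
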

\begin{proof} Let $x \in A \otimes B$. For $\phi \in A^{*}$, we have 
	\begin{equation*}
	\tau [(\phi \otimes \id_{B})(x)] = (\phi \otimes \id_{D})[(\id_{A} \otimes \tau)(x)].
	\end{equation*}
Thus $x \in F(A, \ker(\tau), A \otimes B)$ iff $x \in \ker (\id_{A} \otimes \tau)$.	
\end{proof}

\begin{cor}[({cf.\ \cite[Corollary 1]{MR0397427}})] Let $A$, $B$ and $D$ be operator spaces and let $\tau \colon B \to D$ be a completely bounded map. Then the equality
	\begin{equation*}
	A \otimes \ker(\tau) = \ker (\id_{A} \otimes \tau)
	\end{equation*}
holds if and only if the triple $(A, \ker(\tau), A \otimes B)$ has the slice map property.\qed	
\end{cor}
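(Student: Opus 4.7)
The plan is to observe that this corollary is essentially a direct rewriting of the preceding Proposition \ref{prop ker} through the definition of the slice map property. Proposition \ref{prop ker} gives us the identity $F(A, \ker(\tau), A \otimes B) = \ker(\id_{A} \otimes \tau)$ with no further hypotheses on $\tau$ beyond complete boundedness, so the left-hand side of the claimed equality can be replaced by the Fubini product.

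Concretely, I would argue as follows. Assume first that $(A, \ker(\tau), A \otimes B)$ has the slice map property. By definition, this means $F(A, \ker(\tau), A \otimes B) = A \otimes \ker(\tau)$. Combining this with Proposition \ref{prop ker} gives
\begin{equation*}
A \otimes \ker(\tau) = F(A, \ker(\tau), A \otimes B) = \ker(\id_{A} \otimes \tau),
\end{equation*}
which is the asserted equality. Conversely, if $A \otimes \ker(\tau) = \ker(\id_{A} \otimes \tau)$, then by Proposition \ref{prop ker} again we obtain
\begin{equation*}
A \otimes \ker(\tau) = \ker(\id_{A} \otimes \tau) = F(A, \ker(\tau), A \otimes B),
\end{equation*}
so that $(A, \ker(\tau), A \otimes B)$ has the slice map property by definition.

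There is no real obstacle here: the content of the result lies entirely in Proposition \ref{prop ker}, and the corollary amounts to a bookkeeping step comparing the identification of the Fubini product as a kernel with the definitional equation $F(A, \ker(\tau), A \otimes B) = A \otimes \ker(\tau)$ characterising the slice map property. Hence the proof is a one-line chain of equalities in each direction.
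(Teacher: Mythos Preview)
Your proof is correct and matches the paper's approach exactly: the paper gives no explicit proof (only a \qed), since the corollary is immediate from Proposition~\ref{prop ker} and the definition of the slice map property, which is precisely what you spell out.
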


\begin{thm}\label{thm ker} Let $A$, $B$, $C$ and $D$ be operator spaces and let $\{\sigma_{\alpha}\colon A \to C\}$ and $\{\tau_{\beta}\colon B \to D\}$ be families of completely bounded maps. Then
	\begin{equation*}
	F(\cap_{\alpha}\ker(\sigma_{\alpha}), \cap_{\beta}\ker(\tau_{\beta}), A \otimes B) = (\cap_{\alpha}\ker(\sigma_{\alpha} \otimes \id_{B})) \cap (\cap_{\beta}\ker (\id_{A} \otimes \tau_{\beta})).
	\end{equation*}
\end{thm}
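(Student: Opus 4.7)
The plan is to reduce the theorem to the two tools already in hand: Lemma \ref{lem int} on intersections and Proposition \ref{prop ker} on kernels.

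First I would use the intersection formula from Lemma \ref{lem int} in its simpler binary form to split the left-hand side as
\begin{equation*}
F(\cap_{\alpha}\ker(\sigma_{\alpha}), \cap_{\beta}\ker(\tau_{\beta}), A \otimes B) = F(A, \cap_{\beta}\ker(\tau_{\beta}), A \otimes B) \cap F(\cap_{\alpha}\ker(\sigma_{\alpha}), B, A \otimes B).
\end{equation*}
Then I would apply the more general version of Lemma \ref{lem int} to each factor to pull the intersections outside, obtaining
\begin{equation*}
\bigcap_{\beta} F(A, \ker(\tau_{\beta}), A \otimes B) \cap \bigcap_{\alpha} F(\ker(\sigma_{\alpha}), B, A \otimes B).
\end{equation*}

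Next I would invoke Proposition \ref{prop ker} to identify $F(A, \ker(\tau_{\beta}), A \otimes B) = \ker(\id_{A} \otimes \tau_{\beta})$, and its symmetric counterpart (which holds by the evident symmetry of the defining condition in the Fubini product, swapping the roles of the two tensor factors) to identify $F(\ker(\sigma_{\alpha}), B, A \otimes B) = \ker(\sigma_{\alpha} \otimes \id_{B})$. Substituting these two identifications yields exactly the right-hand side.

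I do not expect any real obstacle: the theorem is essentially a combinatorial bookkeeping statement, every ingredient has been prepared in the preceding lemmas, and the only point requiring a brief mention is the symmetry that legitimises the "dual" form of Proposition \ref{prop ker}. If one wanted to avoid appealing to symmetry, the same computation used in the proof of Proposition \ref{prop ker} works verbatim with the slice directions interchanged, so this adds at most one line.
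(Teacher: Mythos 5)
Your proof is correct and follows exactly the route the paper intends: its proof of Theorem~\ref{thm ker} is simply ``Follows from Lemma~\ref{lem int} and Proposition~\ref{prop ker}'', and your decomposition via the binary and general intersection formulas, together with Proposition~\ref{prop ker} and its symmetric counterpart, is the natural way to fill in those details. Your remark that the dual form of Proposition~\ref{prop ker} needs either a symmetry argument or a one-line repetition of its proof is a fair and accurate observation.
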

\begin{proof} Follows from Lemma~\ref{lem int} and Proposition~\ref{prop ker}. 
\end{proof}

\subsection{Relative Commutants}
As a corollary we obtain the following.
\begin{prop}\label{prop com} Let $S$, $A \subseteq \B(\H)$ and $T$, $B \subseteq \B(\K)$ be operator spaces. Then we have
	\begin{equation*}
	F(S' \cap A, T' \cap B, A \otimes B) = (S \otimes \C1_{\B(\K)} + \C1_{\B(\H)} \otimes T)' \cap (A \otimes B).
	\end{equation*}
\end{prop}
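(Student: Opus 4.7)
The plan is to realize the relative commutants as intersections of kernels of commutator maps, and then apply Theorem~\ref{thm ker}. Specifically, for each $s \in S$, define the completely bounded map $\sigma_{s} \colon A \to \B(\H)$ by $\sigma_{s}(a) \coloneqq sa - as$, and for each $t \in T$, define $\tau_{t} \colon B \to \B(\K)$ by $\tau_{t}(b) \coloneqq tb - bt$. These maps are completely bounded since left/right multiplications by fixed elements of $\B(\H)$, $\B(\K)$ are. By definition,
\begin{equation*}
S' \cap A = \bigcap_{s \in S}\ker(\sigma_{s}) \quad \text{and} \quad T' \cap B = \bigcap_{t \in T}\ker(\tau_{t}).
\end{equation*}

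Applying Theorem~\ref{thm ker} to these two families immediately gives
\begin{equation*}
F(S' \cap A, T' \cap B, A \otimes B) = \Bigl(\bigcap_{s} \ker(\sigma_{s} \otimes \id_{B})\Bigr) \cap \Bigl(\bigcap_{t}\ker(\id_{A} \otimes \tau_{t})\Bigr).
\end{equation*}
The next step is to identify these kernels with commutants. On elementary tensors,
\begin{equation*}
(\sigma_{s} \otimes \id_{B})(a \otimes b) = (sa - as) \otimes b = (s \otimes 1)(a \otimes b) - (a \otimes b)(s \otimes 1),
\end{equation*}
so by linearity on $A \odot B$ and continuity (both sides being bounded maps $A \otimes B \to \B(\H \otimes \K)$) one concludes that $(\sigma_{s} \otimes \id_{B})(x) = (s \otimes 1)x - x(s \otimes 1)$ for every $x \in A \otimes B$. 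Hence $\ker(\sigma_{s} \otimes \id_{B}) = \{x \in A \otimes B \mid x \text{ commutes with } s \otimes 1\}$, and analogously $\ker(\id_{A} \otimes \tau_{t}) = \{x \in A \otimes B \mid x \text{ commutes with } 1 \otimes t\}$.

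Intersecting over all $s \in S$ and $t \in T$, the resulting set is precisely the collection of elements of $A \otimes B$ that commute with every $s \otimes 1$ and every $1 \otimes t$, which is the relative commutant $(S \otimes \C 1_{\B(\K)} + \C 1_{\B(\H)} \otimes T)' \cap (A \otimes B)$. The only non-routine step is the identity $(\sigma_{s} \otimes \id_{B})(x) = (s \otimes 1)x - x(s \otimes 1)$ on the full tensor product, but since both sides are norm continuous in $x$ and agree on the dense subspace $A \odot B$, this extends without issue. Thus the proposition reduces to bookkeeping once the correct families of maps are chosen.
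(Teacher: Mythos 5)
Your proof is correct and follows essentially the same route as the paper: both realize $S'\cap A$ and $T'\cap B$ as intersections of kernels of the commutator maps $\sigma_{s}=[\,\cdot\,,s]$ and $\tau_{t}=[\,\cdot\,,t]$, identify $\ker(\sigma_{s}\otimes\id_{B})$ with the commutant of $s\otimes 1$ (and similarly for $1\otimes t$), and then invoke Theorem~\ref{thm ker}. Your density-and-continuity justification of $(\sigma_{s}\otimes\id_{B})(x)=(s\otimes 1)x-x(s\otimes 1)$ on all of $A\otimes B$ is a detail the paper states without proof, and it is fine.
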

\begin{proof} For $s \in S$,  let $\sigma_{s} \coloneqq [-, s] \colon A \to \B(\H)$. Then $\sigma_{s}$ is completely bounded and
	\begin{equation*}
	\sigma_{s} \otimes \id_{B} = [-, s \otimes 1_{\B(\K)}] \colon A \otimes B \to \B(\H) \otimes  B.
	\end{equation*}
Moreover, we have
	\begin{align*}
	\cap_{s \in S} \ker(\sigma_{s}) &= S' \cap A \quad\text{and}\\
	\cap_{s \in S} \ker(\sigma_{s} \otimes \id_{B}) &= (S \otimes \C1_{\B(\K)})' \cap (A \otimes B).
	\end{align*}	
Similary, $\tau_{t} \coloneqq [-, t] \colon B \to \B(\K)$, $t\in T$,  are  completely bounded and
	\begin{align*}
	\cap_{t \in T} \ker(\tau_{t}) &= T' \cap B \quad\text{and}\\
	\cap_{t \in T} \ker(\id_{A} \otimes \tau_{t}) &= (\C 1_{\B(\H)} \otimes T)' \cap (A \otimes B).
	\end{align*}		 
Thus Theorem~\ref{thm ker} completes the proof.
\end{proof}

\begin{cor}[({cf.\ \cite[Theorem 1]{MR0328609} and \cite[Corollary 2]{MR0397427}})]\label{cor com} Let $S$, $A \subseteq \B(\H)$ and $T$, $B \subseteq \B(\K)$ and suppose $1_{\B(\H)} \in S$ and $1_{\B(\K)} \in T$. Then the equality 
	\begin{equation*}
	(S' \cap A) \otimes (T' \cap B) = (S \otimes T)' \cap (A \otimes B)
	\end{equation*}
holds if and only if the triple $(S' \cap A, T' \cap B, A \otimes B)$ has the slice map property. 	
\end{cor}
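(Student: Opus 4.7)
The plan is to reduce the corollary to Proposition~\ref{prop com} by showing that under the unitality hypotheses $1_{\B(\H)} \in S$ and $1_{\B(\K)} \in T$, the relative commutants $(S \otimes \C 1_{\B(\K)} + \C 1_{\B(\H)} \otimes T)' \cap (A \otimes B)$ and $(S \otimes T)' \cap (A \otimes B)$ coincide. Once this identification is in place, the corollary is immediate: by Proposition~\ref{prop com}, the Fubini product $F(S' \cap A, T' \cap B, A \otimes B)$ equals the former, so by definition of the slice map property the triple $(S' \cap A, T' \cap B, A \otimes B)$ has the slice map property exactly when $(S' \cap A) \otimes (T' \cap B)$ equals the latter, which is the asserted equality.

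To establish the commutant equality I would argue by double inclusion. For $\supseteq$, note that since $1_{\B(\H)} \in S$ and $1_{\B(\K)} \in T$ we have $S \otimes \C 1_{\B(\K)} \subseteq S \otimes T$ and $\C 1_{\B(\H)} \otimes T \subseteq S \otimes T$, hence any element of $A \otimes B$ commuting with all of $S \otimes T$ a fortiori commutes with $S \otimes \C 1_{\B(\K)} + \C 1_{\B(\H)} \otimes T$. For $\subseteq$, the key observation is the factorisation
\begin{equation*}
s \otimes t = (s \otimes 1_{\B(\K)})(1_{\B(\H)} \otimes t), \qquad s \in S,\ t \in T,
\end{equation*}
so that any $x$ commuting with every $s \otimes 1_{\B(\K)}$ and every $1_{\B(\H)} \otimes t$ automatically commutes with each elementary tensor $s \otimes t$; by linearity and norm continuity of the commutator, $x$ then commutes with the closed linear span $S \otimes T$.

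Combining these two steps gives the desired equality of commutants, and feeding this into Proposition~\ref{prop com} yields the corollary. There is no real obstacle here beyond being careful to use the unital hypotheses at exactly the right place, namely to ensure $S \otimes \C 1_{\B(\K)}$ and $\C 1_{\B(\H)} \otimes T$ sit inside $S \otimes T$ so that the two commutants can be compared directly.
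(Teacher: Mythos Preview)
Your proof is correct and follows essentially the same approach as the paper: reduce to Proposition~\ref{prop com} by using the unitality hypotheses to identify $(S \otimes T)'$ with $(S \otimes \C 1_{\B(\K)} + \C 1_{\B(\H)} \otimes T)'$. The paper merely asserts this commutant equality, whereas you spell out the double inclusion via the factorisation $s \otimes t = (s \otimes 1_{\B(\K)})(1_{\B(\H)} \otimes t)$; this added detail is fine and changes nothing substantive.
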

\begin{proof} Follows from Proposition~\ref{prop com}, since under the unitality conditions, we have 
	\begin{equation*}
	(S \otimes T)' = (S \otimes \C 1_{\B(\K)} + \C 1_{\B(\H)} \otimes T)'.
	\end{equation*}
\end{proof}
\begin{cor}[({cf.\ \cite[Corollary 1]{MR0328609}})] Let $A$ and $B$ be \cast-algebras. Then
	\begin{equation*}
	Z(A) \otimes Z(B) \cong Z(A \otimes B),
	\end{equation*}
where $Z$ denotes the center.	
\end{cor}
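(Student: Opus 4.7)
The plan is to apply Corollary~\ref{cor com} with $S = A$ and $T = B$. After faithfully representing $A$ and $B$ on Hilbert spaces $\H$ and $\K$ (replacing $A$, $B$ by their unitizations if necessary so that the unital hypotheses $1_{\B(\H)} \in S$ and $1_{\B(\K)} \in T$ are satisfied), I identify $S' \cap A = Z(A)$ and $T' \cap B = Z(B)$. Since $A \otimes B$ is generated as a C*-algebra by $A \otimes \C 1 + \C 1 \otimes B$, the commutants $(A \otimes \C 1 + \C 1 \otimes B)'$ and $(A \otimes B)'$ coincide in $\B(\H \otimes \K)$, giving $(S \otimes T)' \cap (A \otimes B) = Z(A \otimes B)$. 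Corollary~\ref{cor com} thus reduces the desired isomorphism $Z(A) \otimes Z(B) \cong Z(A \otimes B)$ to verifying that the triple $(Z(A), Z(B), A \otimes B)$ enjoys the slice map property.

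The main obstacle is precisely this slice map property. The essential point is that $Z(A)$ and $Z(B)$ are commutative, hence nuclear, C*-algebras. By Gelfand duality, $Z(A) \cong C(X)$ and $Z(B) \cong C(Y)$ for compact Hausdorff spaces $X, Y$, and $Z(A) \otimes Z(B) \cong C(X \times Y)$. Nuclearity supplies completely positive approximations $A \to Z(A)$ (and similarly for $B$) that restrict to the identity on the centers in the limit. Combined with the functoriality lemma from Subsection~\ref{sub fubini product} and the essentially finite-rank computation that $(Z(A), Z(B), Z(A) \otimes B)$ has the slice map property, a standard approximation argument then pushes any $x \in F(Z(A), Z(B), A \otimes B)$ into $Z(A) \otimes Z(B)$ in the limit.

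In summary, the corollary is immediate from Corollary~\ref{cor com} once the slice map property is in hand; the remaining technical content is encapsulated in this slice map property, which reflects the nuclearity of the commutative subalgebras $Z(A)$ and $Z(B)$. For non-unital $A, B$, the unital reduction via unitizations $A^+$, $B^+$ and the identities $Z(A \otimes B) = Z(A^+ \otimes B^+) \cap (A \otimes B)$ complete the argument.
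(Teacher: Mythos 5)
Your overall strategy coincides with the paper's: apply Corollary~\ref{cor com} with $S=A$, $T=B$ (after unitizing) to identify $F(Z(A),Z(B),A\otimes B)$ with $Z(A\otimes B)$, and then reduce everything to the slice map property of the triple $(Z(A),Z(B),A\otimes B)$, which is to be extracted from the nuclearity of the commutative algebras $Z(A)$ and $Z(B)$. The paper's one-line proof invokes exactly these two ingredients (Corollary~\ref{cor com} together with Remark~\ref{rem CPAP to OAP}, i.e.\ CPAP $\Rightarrow$ strong OAP for the centers), so up to this point you are on the paper's track.

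The soft spot is your verification of the slice map property. Nuclearity of $Z(A)$ gives finite-rank u.c.p.\ maps $Z(A)\to M_n\to Z(A)$ converging to $\id_{Z(A)}$ pointwise, and injectivity of $M_n$ does let you extend them to maps $\Phi_\alpha\colon A\to Z(A)$; but these $\Phi_\alpha$ converge to the identity \emph{only on $Z(A)$}, not on $A$. Consequently, for $x\in F(Z(A),Z(B),A\otimes B)$ the convergence $(\Phi_\alpha\otimes\id_B)(x)\to x$ is not a ``standard approximation argument'': $x$ is a norm limit of elements of $A\odot B$, not of $Z(A)\odot B$, and knowing only that the slices $(\id_A\otimes\psi)(x)$ lie in $Z(A)$ does not allow the usual $\varepsilon/3$ estimate to run. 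Indeed, the assertion $(\Phi_\alpha\otimes\id_B)(x)\to x$ is essentially equivalent to $F(Z(A),B,A\otimes B)=Z(A)\otimes B$, which is the hard half of what is being proved. What the OAP machinery of Subsection~\ref{sub OAP} actually delivers is the slice map property for triples in which $Z(A)$ (resp.\ $Z(B)$) is the \emph{full} tensor factor, e.g.\ $(Z(A),T,Z(A)\otimes B)$; passing to the ambient algebra $A\otimes B$ via Proposition~\ref{prop trans} still requires the missing input that $(A,Z(B),A\otimes B)$ has the slice map property, and that is precisely the content of the cited Haydon--Wassermann theorem. In fairness, the paper's own proof is equally laconic on this point, but your sketch should not present the passage from ambient algebra $Z(A)\otimes B$ to ambient algebra $A\otimes B$ as routine.
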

\begin{proof} Follows from Corollary~\ref{cor com} and Remark~\ref{rem CPAP to OAP}.
\end{proof}

\subsection{Invariant Elements}
The following can be deduced from Proposition~\ref{prop com}. We give a direct proof.
\begin{prop}\label{prop inv} Let $A$ and $B$ be operator spaces. Suppose that the group $G$ acts on $A$ and the group $H$ acts on $B$ by completely bounded maps. Then
	\begin{equation*}
	F(A^{G}, B^{H}, A \otimes B) = (A \otimes B)^{G \times H}. 
	\end{equation*}
\end{prop}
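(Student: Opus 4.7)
The plan is to apply Theorem~\ref{thm ker} with the families of completely bounded maps given by ``subtract the action'', turning the fixed-point spaces into intersections of kernels.

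First I would observe that for each $g \in G$ the map $\sigma_{g} \coloneqq g - \id_{A} \colon A \to A$ is completely bounded (as a difference of completely bounded maps), and
\begin{equation*}
A^{G} = \cap_{g \in G} \ker(\sigma_{g}),
\end{equation*}
and similarly $B^{H} = \cap_{h \in H} \ker(\tau_{h})$ where $\tau_{h} \coloneqq h - \id_{B}$. Substituting these descriptions into Theorem~\ref{thm ker} gives
\begin{equation*}
F(A^{G}, B^{H}, A \otimes B) = \bigl(\cap_{g} \ker(\sigma_{g} \otimes \id_{B})\bigr) \cap \bigl(\cap_{h} \ker(\id_{A} \otimes \tau_{h})\bigr).
\end{equation*}

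Next I would rewrite each kernel in group-theoretic terms. Since $\sigma_{g} \otimes \id_{B} = (g \otimes \id_{B}) - \id_{A \otimes B}$, its kernel is exactly the set of $x \in A \otimes B$ fixed by $g \otimes \id_{B}$; intersecting over $g \in G$ gives the fixed points of the induced $G$-action on $A \otimes B$. Similarly, $\cap_{h} \ker(\id_{A} \otimes \tau_{h})$ is the fixed-point space of the $H$-action. Finally, since the $G \times H$-action on $A \otimes B$ is generated by the commuting $G$- and $H$-actions, being fixed by all of $G \times H$ is equivalent to being fixed by both subactions, yielding
\begin{equation*}
\bigl(\cap_{g} \ker(\sigma_{g} \otimes \id_{B})\bigr) \cap \bigl(\cap_{h} \ker(\id_{A} \otimes \tau_{h})\bigr) = (A \otimes B)^{G \times H}.
\end{equation*}

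There is no real obstacle here beyond bookkeeping; the only thing to check carefully is that the action of $G \times H$ used on the right-hand side is the one generated by the diagonal/tensor extension of the given actions, so that fixing everything in $G \times H$ does reduce to fixing everything in $G \times \{e\}$ and $\{e\} \times H$ separately. Once this is clear, combining the two displayed equalities gives the claim.
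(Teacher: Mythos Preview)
Your proof is correct and follows essentially the same approach as the paper: define $\sigma_{g}=\id_{A}-g$ (the sign difference from your $g-\id_{A}$ is immaterial for kernels) and $\tau_{h}=\id_{B}-h$, identify $A^{G}$ and $B^{H}$ as intersections of their kernels, apply Theorem~\ref{thm ker}, and then observe that $(A\otimes B)^{G\times H}=(A\otimes B)^{G\times\{1_{H}\}}\cap(A\otimes B)^{\{1_{G}\}\times H}$.
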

\begin{proof} For $g \in G$, let $\sigma_{g} \coloneqq (\id_{A} - g)\colon A \to A$. Then $\sigma_{g}$ is completely bounded and 
	\begin{equation*}
	\sigma_{g} \otimes \id_{B} = (\id_{A \otimes B} - 1_{H} \times g) \colon A \otimes B \to A \otimes B.
	\end{equation*}
Moreover, we have
	\begin{align*}
	\cap_{g \in G}\ker(\sigma_{g}) &= A^{G}\\
%
	\cap_{g \in G}\ker(\sigma_{g} \otimes \id_{B}) &= (A \otimes B)^{G \times \{1_{H}\}}.
	\end{align*}
Similarly, $\tau_{h} \coloneqq (\id_{B} - h) \colon B \to B$, $h \in H$, are completely bounded and
	\begin{align*}
	\cap_{h \in H}\ker(\tau_{h}) &= B^{H}\\
%
	\cap_{h \in H}\ker(\id_{A} \otimes \tau_{h} ) &= (A \otimes B)^{\{1_{G}\} \times H}.
	\end{align*}
Now Theorem~\ref{thm ker} completes the proof, since 
	\begin{equation*}
	(A \otimes B)^{G \times H} = (A \otimes B)^{G \times \{1_{H}\}} \cap (A \otimes B)^{\{1_{G}\} \times H}.
	\end{equation*}
\end{proof}

\begin{cor}[({cf.\ \cite[Corollary 7]{MR0410402}})]\label{cor inv} Let $A$ and $B$ be operator spaces. Suppose that group $G$ acts on $A$ and group $H$ acts on $B$ by completely bounded maps. Then the equality  
	\begin{equation*}
	A^{G} \otimes B^{H} = (A \otimes B)^{G \times H}
	\end{equation*}
holds if and only if the triple $(A^{G}, B^{H}, A \otimes B)$ has the slice map property.
\end{cor}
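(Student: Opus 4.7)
The plan is to observe that this corollary is essentially a direct translation of Proposition~\ref{prop inv} via the definition of the slice map property, exactly parallel to the way Corollary~\ref{cor com} is deduced from Proposition~\ref{prop com}.

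First I would invoke Proposition~\ref{prop inv}, which already identifies the Fubini product with the invariant part:
\begin{equation*}
F(A^{G}, B^{H}, A \otimes B) = (A \otimes B)^{G \times H}.
\end{equation*}
Next I would unwind the definition of the slice map property: the triple $(A^{G}, B^{H}, A \otimes B)$ has the slice map property precisely when
\begin{equation*}
F(A^{G}, B^{H}, A \otimes B) = A^{G} \otimes B^{H}.
\end{equation*}
Substituting the left-hand side using Proposition~\ref{prop inv} gives the equivalence
\begin{equation*}
A^{G} \otimes B^{H} = (A \otimes B)^{G \times H} \iff (A^{G}, B^{H}, A \otimes B) \text{ has the slice map property},
\end{equation*}
which is the claim.

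There is no real obstacle here since the proposition has already done all the work; the only thing to make sure of is that nothing extra (such as unitality, as needed in Corollary~\ref{cor com}) is required in this invariant setting. Checking Proposition~\ref{prop inv}, its conclusion holds for arbitrary operator spaces with completely bounded actions and requires no such hypothesis, so the two-line deduction above is complete.
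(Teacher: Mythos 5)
Your proposal is correct and matches the paper's (implicit) argument: the corollary is an immediate consequence of Proposition~\ref{prop inv} combined with the definition of the slice map property, and the paper indeed leaves this two-line substitution to the reader. Your remark that no unitality hypothesis is needed here, in contrast with Corollary~\ref{cor com}, is an accurate observation.
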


\subsection{Combinations}\label{sub combinations}

\begin{lem}\label{lem fstab} Let $S \subseteq A$ and $T \subseteq B$ be operator spaces. Then
	\begin{equation*}
	F(S, T, A \otimes T) = (A \otimes T) \cap F(S, B, A \otimes B).
	\end{equation*}
\end{lem}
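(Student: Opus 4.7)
The plan is to unpack both sides of the claimed equality into elementary conditions on slice maps and then observe that the two conditions agree by Hahn--Banach.

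First I would note that for $x \in A \otimes T$, the condition $(\phi \otimes \id_T)(x) \in T$ is automatic for every $\phi \in A^*$, so the Fubini product on the left-hand side simplifies to
\begin{equation*}
F(S, T, A \otimes T) = \{x \in A \otimes T \mid (\id_A \otimes \psi)(x) \in S \text{ for all } \psi \in T^*\}.
\end{equation*}
Similarly, since $B$ itself appears in the slot where $T$ would be tested, the condition $(\phi \otimes \id_B)(x) \in B$ is vacuous, and intersecting with $A \otimes T$ gives
\begin{equation*}
(A \otimes T) \cap F(S, B, A \otimes B) = \{x \in A \otimes T \mid (\id_A \otimes \psi)(x) \in S \text{ for all } \psi \in B^*\}.
\end{equation*}

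Next I would verify that these two defining conditions coincide. The key point is that for $x \in A \otimes T$ and $\psi \in B^*$, the slice $(\id_A \otimes \psi)(x)$ depends only on $\psi|_T$: this is obvious for $x \in A \odot T$ and extends by norm-continuity of the slice map to $A \otimes T$. Therefore the set of slices $\{(\id_A \otimes \psi)(x) \mid \psi \in B^*\}$ equals the set $\{(\id_A \otimes \psi)(x) \mid \psi \in T^*\}$: the inclusion $\subseteq$ follows by restriction $\psi \mapsto \psi|_T$, and the reverse inclusion follows by extending any $\psi \in T^*$ to some $\tilde\psi \in B^*$ via Hahn--Banach and observing that $(\id_A \otimes \tilde\psi)(x) = (\id_A \otimes \psi)(x)$ for $x \in A \otimes T$.

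Combining the two steps yields the equality of the two descriptions, hence the lemma. There is no real obstacle here; the only point that requires a moment's care is justifying that slicing $x \in A \otimes T$ by a functional $\psi \in B^*$ really depends only on $\psi|_T$, which reduces to the algebraic identity on $A \odot T$ and passes to the completion because the slice maps $\id_A \otimes \psi$ and $\id_A \otimes \psi|_T$ are bounded and agree on a dense subspace.
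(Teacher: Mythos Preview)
Your proof is correct and takes essentially the same approach as the paper: the paper's one-line proof simply says ``Clear since any element of $T^{*}$ extends to an element of $B^{*}$,'' which is precisely the Hahn--Banach step you identify as the key point. Your version is more detailed in that it also spells out why slicing $x \in A \otimes T$ by $\psi \in B^{*}$ depends only on $\psi|_{T}$, but the core idea is identical.
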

\begin{proof} Clear since any element of $T^{*}$ extends to an element of $B^{*}$.
\end{proof}
\begin{lem}[({\cite[Lemma 2.6]{MR3272046}})]\label{lem aotimes} Let $S \subseteq A$ and $T \subseteq B$ be operator spaces. If $(A, T, A \otimes B)$ has the slice map property, then
	\begin{equation*}
	F(S, T, A \otimes T) = F(S, T, A \otimes B)
	\end{equation*}
\end{lem}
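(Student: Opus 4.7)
The plan is to combine the intersection formula from Lemma~\ref{lem int} with the hypothesis and then match the result to the expression supplied by Lemma~\ref{lem fstab}. First, I would apply Lemma~\ref{lem int} with the two ``enlargements'' $S \subseteq A$ and $T \subseteq B$ to obtain
\begin{equation*}
F(S, T, A \otimes B) = F(A, T, A \otimes B) \cap F(S, B, A \otimes B).
\end{equation*}
The hypothesis that $(A, T, A \otimes B)$ has the slice map property rewrites the first factor on the right as $A \otimes T$, yielding
\begin{equation*}
F(S, T, A \otimes B) = (A \otimes T) \cap F(S, B, A \otimes B).
\end{equation*}

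Next, I would invoke Lemma~\ref{lem fstab}, which says exactly that the right-hand side equals $F(S, T, A \otimes T)$. Chaining the two identities gives the desired equality $F(S, T, A \otimes T) = F(S, T, A \otimes B)$.

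There is essentially no obstacle: the argument is a two-step bookkeeping using the previously established intersection lemmas, with the slice map hypothesis used once to identify $F(A, T, A \otimes B)$ with $A \otimes T$. The only point worth a sentence of care in the write-up is that Lemma~\ref{lem fstab} applies in the form we need because every functional in $T^{*}$ extends to one in $B^{*}$ (Hahn--Banach/Wittstock), so that the ``inner'' slice condition defining $F(S, T, A \otimes T)$ matches the one inherited from $F(S, B, A \otimes B)$ when intersected with $A \otimes T$.
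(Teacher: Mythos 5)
Your proof is correct and is essentially identical to the paper's: both chain the same three equalities, namely $F(S,T,A\otimes T)=(A\otimes T)\cap F(S,B,A\otimes B)$ from Lemma~\ref{lem fstab}, the replacement of $A\otimes T$ by $F(A,T,A\otimes B)$ via the slice map hypothesis, and the intersection formula of Lemma~\ref{lem int}; you merely read the chain in the opposite direction.
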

\begin{proof} We have
	\begin{align*}
	F(S, T, A \otimes T) &= (A \otimes T) \cap F(S, B, A \otimes B)\\
	&= F(A, T, A \otimes B) \cap F(S, B, A \otimes B)\\
	& = F(S, T, A \otimes B),
	\end{align*}
by Lemma~\ref{lem fstab}	and Lemma~\ref{lem int}.
\end{proof}

\begin{prop}[({\cite[Corollary 2.7]{MR3272046}})]\label{prop trans} Let $S \subseteq A$ and $T \subseteq B$ be operator spaces. If $(A, T, A \otimes B)$ and $(S, T, A \otimes T)$ have the slice map property, then $(S, T, S \otimes B)$ and $(S, T, A \otimes B)$ also have the slice map property. Conversely, if $(S, T, A \otimes B)$ has the slice map property, then $(S, T, A \otimes T)$ and $(S, T, S \otimes B)$ also have the slice map property. 
\end{prop}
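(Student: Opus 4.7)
The argument rests on Lemma~\ref{lem aotimes} together with the elementary monotonicity
\[
S\otimes T \subseteq C\otimes D \subseteq A\otimes B \quad\Longrightarrow\quad F(S,T,C\otimes D) \subseteq F(S,T,A\otimes B),
\]
which follows directly from the definition: any $\phi\in A^{*}$ restricts to an element of $C^{*}$ (and symmetrically for $B^{*}$), so the Fubini condition in the ambient space is weaker than in the enveloping space. Equivalently, this is the functoriality lemma applied to the inclusions $C\hookrightarrow A$ and $D\hookrightarrow B$.

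For the forward implication, assuming $(A,T,A\otimes B)$ and $(S,T,A\otimes T)$ have the slice map property, I would first apply Lemma~\ref{lem aotimes} to identify
\[
F(S,T,A\otimes B) = F(S,T,A\otimes T) = S\otimes T,
\]
which is the slice map property for $(S,T,A\otimes B)$. The slice map property for $(S,T,S\otimes B)$ then drops out of the sandwich
\[
S\otimes T \subseteq F(S,T,S\otimes B) \subseteq F(S,T,A\otimes B) = S\otimes T,
\]
whose right-hand inclusion is the monotonicity observation.

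For the converse, assuming $F(S,T,A\otimes B) = S\otimes T$, the same monotonicity gives
\[
S\otimes T \subseteq F(S,T,A\otimes T) \subseteq F(S,T,A\otimes B) = S\otimes T,
\]
and analogously for $F(S,T,S\otimes B)$, yielding both remaining slice map properties. There is essentially no obstacle: once Lemma~\ref{lem aotimes} is granted, the entire proposition is a short diagram chase, and the apparent asymmetry between the hypotheses $(A,T,A\otimes B)$ and $(S,T,A\otimes T)$ (as opposed to $(S,B,A\otimes B)$ for the $S\otimes B$ conclusion) is bypassed by the sandwich argument rather than by a second application of Lemma~\ref{lem aotimes}.
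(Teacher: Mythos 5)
Your proof is correct and follows essentially the same route as the paper: Lemma~\ref{lem aotimes} identifies $F(S,T,A\otimes T)$ with $F(S,T,A\otimes B)$, and the monotonicity of the Fubini product under the inclusions $S\otimes B,\ A\otimes T \subseteq A\otimes B$ (the paper's ``commutative diagram of inclusions'') supplies the sandwich arguments for both directions. No gaps.
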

\begin{proof} Follows from Lemma~\ref{lem aotimes} and the following commutative diagram of inclusions:
	\begin{equation*}
	\xymatrix{
	F(S, T, A \otimes T) \ar@{^{(}->}[r] & F(S, T, A \otimes B)\\
	S \otimes T \ar@{^{(}->}[r] \ar@{^{(}->}[u]& F(S, T, S \otimes B)  \ar@{^{(}->}[u]\\
 	}
	\end{equation*}
\end{proof}

\subsection{The Operator Approximation Property}\label{sub OAP}

In this subsection, we review the connection between the operator approximation property and the slice map property. This is partly for completeness and partly because we use it in Section~\ref{section ITAP}. However, we have nothing new to add here to the excellent work of Kraus \cite{MR1138840} and Haagerup-Kraus \cite{MR1220905}.   

\begin{defn}
We say that $A$ has the {\em slice map property} for $B$ if $(A, T, A \otimes B)$ has the slice map property for all operator spaces $T \subseteq B$.
\end{defn}

\begin{defn} We write $F(A, B)$ for the space of finite-rank maps $A \to B$.
\end{defn}

\begin{lem} Finite-rank maps of operator spaces are completely bounded. \qed
\end{lem}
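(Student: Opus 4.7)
The plan is to decompose a finite-rank map $\phi \colon A \to B$ as a finite sum of rank-one maps, each of which factors as a scalar-valued linear functional followed by multiplication by a vector, and then to verify that both factors are completely bounded. First, I would fix a basis $b_{1}, \dots, b_{n}$ of the (necessarily finite-dimensional) image of $\phi$. Since $\phi$ is bounded and the coefficient projections onto a finite-dimensional subspace relative to a chosen basis are automatically continuous, this yields a unique decomposition $\phi(a) = \sum_{i=1}^{n} \phi_{i}(a) b_{i}$ with $\phi_{i} \in A^{*}$. Equivalently, $\phi = \sum_{i=1}^{n} M_{b_{i}} \circ \phi_{i}$, where $M_{b_{i}} \colon \C \to B$ denotes the scalar-multiplication map $\lambda \mapsto \lambda b_{i}$.

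Next, I would verify that each factor of each summand is completely bounded. For the scalar-multiplication map $M_{b_{i}}$ this is immediate from the definition: for any operator space $D$, the tensored map $M_{b_{i}} \otimes \id_{D} \colon \C \otimes D \to B \otimes D$ is the embedding $d \mapsto b_{i} \otimes d$, so $\|M_{b_{i}}\|_{\cb} = \|b_{i}\|$. For the scalar-valued functional $\phi_{i} \in A^{*}$, I would invoke the standard operator space fact that every bounded linear functional is completely bounded with the same norm, i.e.\ $\|\phi_{i}\|_{\cb} = \|\phi_{i}\|$. Then, since the cb-norm is submultiplicative under composition and subadditive under finite sums, $\phi$ is completely bounded, with $\|\phi\|_{\cb} \le \sum_{i=1}^{n} \|\phi_{i}\| \cdot \|b_{i}\|$.

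The only non-formal step, and thus the main (though mild) obstacle, is the equality $\|\phi_{i}\|_{\cb} = \|\phi_{i}\|$ for scalar-valued functionals. In the setting of this paper it can be deduced from Wittstock's theorem by first extending $\phi_{i}$ from $A \subseteq \B(\H)$ to all of $\B(\H)$ via Hahn--Banach, and then representing the extension as a linear combination of vector functionals $x \mapsto \bk{x \xi, \eta}$, each of which is manifestly completely bounded with cb-norm at most $\|\xi\| \cdot \|\eta\|$. As this is a well-known staple of operator space theory, it could equally well simply be cited.
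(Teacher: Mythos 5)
Your overall route is the standard one and is essentially what the paper is implicitly invoking: the lemma is stated with a bare \verb|\qed| and no argument, as a well-known fact, and the intended justification is exactly your decomposition $\phi=\sum_{i=1}^{n}\phi_{i}(\cdot)\,b_{i}$ into rank-one maps, together with the fact that bounded linear functionals on an operator space are completely bounded with the same norm. Your reduction steps (the coefficient functionals of a bounded map into a finite-dimensional space are bounded; $M_{b_{i}}$ has cb-norm $\|b_{i}\|$; cb-norms are subadditive under sums and submultiplicative under composition) are all correct.

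The one step that would fail as written is your proposed derivation of $\|\phi_{i}\|_{\cb}=\|\phi_{i}\|$. After extending $\phi_{i}$ to $\B(\H)$ by Hahn--Banach, the extension is a general element of $\B(\H)^{*}$, and such a functional need not be a linear combination of vector functionals $x\mapsto\bk{x\xi,\eta}$: those span only a dense subspace of the predual $\B(\H)_{*}$, and a non-normal functional is not even a norm limit of them. Fortunately neither the extension nor the vector-functional representation is needed. For $\psi\in A^{*}$ with $\|\psi\|\le 1$ and $[a_{kl}]\in M_{n}(A)$ one has
\begin{equation*}
\bigl\|[\psi(a_{kl})]\bigr\|=\sup\Bigl|\sum_{k,l}\overline{\alpha_{k}}\,\psi(a_{kl})\,\beta_{l}\Bigr|=\sup\Bigl|\psi\Bigl(\sum_{k,l}\overline{\alpha_{k}}\,a_{kl}\,\beta_{l}\Bigr)\Bigr|\le\bigl\|[a_{kl}]\bigr\|,
\end{equation*}
the suprema being over unit vectors $\alpha,\beta\in\C^{n}$; the last inequality holds because $\sum_{k,l}\overline{\alpha_{k}}\,a_{kl}\,\beta_{l}$ is the compression $\alpha^{*}[a_{kl}]\beta$ of $[a_{kl}]\in\B(\H^{n})$ by a row contraction and a column contraction. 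This gives $\sup_{n}\|\psi\otimes\id_{M_{n}}\|=\|\psi\|$ directly, and hence, by the paper's lemma identifying this supremum with $\|\psi\|_{\cb}$ and bounding $\|\psi\otimes\id_{D}\|\le\|\psi\|_{\cb}$, the boundedness of $\psi\odot\id_{D}$ for every operator space $D$. With that repair (or a citation of this standard fact), your proof is complete.
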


Let $A$ and $B$ be operator spaces and let $x \in A \otimes B$. Define 
	\begin{align*}
	F_{B}(x) &\coloneqq \{(\Phi \otimes \id_{B})(x) \mid \Phi \in F(A, A)\}^{=} \subseteq A \otimes B, \\
	T_{B}(x) &\coloneqq \{(\phi \otimes \id_{B})(x) \mid \phi \in A^{*}\}^{=} \subseteq B.
	\end{align*} 
Then 
	\begin{equation*}
	F_{B}(x) = A \otimes T_{B}(x).
	\end{equation*}
Moreover, we have
	\begin{align*}
	F(A, T, A \otimes B) &= \{x \in A \otimes B \mid T_{B}(x) \subseteq T\}\\
	&= \{x \in A \otimes B \mid F_{B}(x) \subseteq A \otimes T\}.
	\end{align*}

\begin{lem}[({\cite[Theorem 5.4]{MR1138840}})]\label{lem main kraus}
Let $A$ and $B$ be operator spaces. Then $A$ has the slice map property for $B$ if and only if $x \in F_{B}(x)$ for all $x \in A \otimes B$.
\end{lem}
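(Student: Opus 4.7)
The plan is to observe that the two identities displayed immediately before the lemma do almost all the work: namely $F_B(x) = A \otimes T_B(x)$ and
\begin{equation*}
F(A, T, A \otimes B) = \{x \in A \otimes B \mid T_B(x) \subseteq T\}.
\end{equation*}
Given these, the lemma reduces to a single natural choice of $T$, namely $T = T_B(x)$.

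For the forward direction, I would assume $A$ has the slice map property for $B$ and fix $x \in A \otimes B$. The set $T_B(x)$ is by definition a norm-closed linear subspace of $B$, hence an operator space, so the slice map property is applicable to it. The characterization of $F(A, -, A \otimes B)$ above gives tautologically that $x \in F(A, T_B(x), A \otimes B)$, because $T_B(x) \subseteq T_B(x)$. The slice map property applied with $T = T_B(x)$ then yields
\begin{equation*}
F(A, T_B(x), A \otimes B) = A \otimes T_B(x) = F_B(x),
\end{equation*}
so $x \in F_B(x)$.

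For the converse, I would assume $x \in F_B(x)$ for every $x \in A \otimes B$ and fix an arbitrary operator space $T \subseteq B$. The inclusion $A \otimes T \subseteq F(A, T, A \otimes B)$ is automatic from the general remark on Fubini products. Conversely, if $x \in F(A, T, A \otimes B)$, then $T_B(x) \subseteq T$, and hence
\begin{equation*}
x \in F_B(x) = A \otimes T_B(x) \subseteq A \otimes T,
\end{equation*}
which gives the reverse inclusion and thus the slice map property.

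There is no substantial obstacle here; the argument is essentially bookkeeping. The only thing worth pausing on is the equality $F_B(x) = A \otimes T_B(x)$, which is used at face value: it rests on the fact that every $\Phi \in F(A,A)$ has the form $\sum_i a_i\, \phi_i(\cdot)$ with $a_i \in A$ and $\phi_i \in A^*$, so that $(\Phi \otimes \id_B)(x) = \sum_i a_i \otimes (\phi_i \otimes \id_B)(x)$ runs over $A \odot T_B(x)$ as $\Phi$ varies, and norm closure gives $A \otimes T_B(x)$. The slickness of the argument is that one does not need to verify the slice map property for every $T$ separately; testing it against the canonical choice $T = T_B(x)$ suffices.
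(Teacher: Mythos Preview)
Your proof is correct and matches the paper's argument essentially line for line: both directions hinge on the canonical choice $T = T_B(x)$ together with the identities $F_B(x) = A \otimes T_B(x)$ and $F(A, T, A \otimes B) = \{x \mid T_B(x) \subseteq T\}$. The only difference is that you spell out a few steps (e.g., the automatic inclusion $A \otimes T \subseteq F(A,T,A\otimes B)$ and the reason for $F_B(x) = A \otimes T_B(x)$) that the paper leaves implicit.
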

\begin{proof} ($\Rightarrow$): Let $x \in A \otimes B$. Clearly, $x \in F(A, T_{B}(x), A \otimes B)$. Since $A$ has the slice map property for $B$, we have $F(A, T_{B}(x), A \otimes B) = A \otimes T_{B}(x) = F_{B}(x)$. Thus $x \in F_{B}(x)$.

($\Leftarrow$): Let $T \subseteq B$ be operator subspace and let $x \in F(A, T, A \otimes B)$. Then $F_{B}(x) \subseteq A \otimes T$. Thus $x \in A \otimes T$.
\end{proof}

\begin{defn} An operator space $B$ is {\em matrix stable} if for each $n \in \N$, there is a completely bounded surjection $B \to B \otimes M_{n}$.
\end{defn}

\begin{defn} We say that  $A$ has the {\em operator approximation property} (OAP) for $B$ if there is a net $\Phi_{\alpha} \in F(A, A)$  such that $\Phi_{\alpha} \otimes \id_{B}$ converges to $\id_{A} \otimes \id_{B}$ in point-norm topology.
\end{defn}
\begin{thm}[({cf.\ \cite[Theorem 5.4]{MR1138840}})]\label{thm kraus} Let $A$ and $B$ be operator spaces. If $A$ has the operator approximation property for $B$, then $A$ has the slice map property for $B$. If $B$ is matrix stable, then the converse also holds.

\end{thm}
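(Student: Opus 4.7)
The plan is to handle the two directions separately. For the forward implication, suppose $A$ has the OAP for $B$ witnessed by a net $\Phi_\alpha \in F(A,A)$. For any $x \in A \otimes B$, the elements $(\Phi_\alpha \otimes \id_B)(x)$ lie in $F_B(x)$ by the very definition of $F_B(x)$, and they converge to $x$ in norm by hypothesis. Hence $x \in F_B(x)$ for every $x \in A \otimes B$, and Lemma~\ref{lem main kraus} gives the slice map property of $A$ for $B$.

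For the converse, first I would recast the OAP as a finite approximation property: a net $\Phi_\alpha \in F(A,A)$ converging to $\id_A$ in the sense that $\Phi_\alpha \otimes \id_B \to \id_{A \otimes B}$ point-norm exists if and only if for every finite set $\{x_1, \ldots, x_n\} \subseteq A \otimes B$ and every $\varepsilon > 0$ there is some $\Phi \in F(A,A)$ with $\|(\Phi \otimes \id_B)(x_i) - x_i\| < \varepsilon$ for all $i$. One direction is obvious, the other comes from indexing the desired net by pairs (finite subset, tolerance) partially ordered by refinement.

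The substantive step is to produce, from the slice map property for $B$, a single $\Phi$ that simultaneously approximates $x_1, \ldots, x_n$. For this I would bundle the $x_i$ into a single element
\begin{equation*}
y \coloneqq \sum_{i=1}^n x_i \otimes e_{i1} \in A \otimes (B \otimes M_n),
\end{equation*}
and observe that any norm estimate on $(\Phi \otimes \id_{B \otimes M_n})(y) - y$ automatically dominates each individual estimate $\|(\Phi \otimes \id_B)(x_i) - x_i\|$, because the column norm on $M_n(A \otimes B)$ dominates each matrix entry. Thus once we know $A$ has the slice map property for $B \otimes M_n$, Lemma~\ref{lem main kraus} applied to $y$ yields $y \in F_{B \otimes M_n}(y)$, and we can pick the required $\Phi$.

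The one thing that really requires the matrix stability hypothesis is the transfer of the slice map property from $B$ to $B \otimes M_n$; this is the main obstacle. The completely bounded surjection $B \to B \otimes M_n$ guaranteed by matrix stability should allow us to pull back the characterization of Lemma~\ref{lem main kraus} from $B$ to $B \otimes M_n$: given $z \in A \otimes (B \otimes M_n)$, one lifts through $\id_A \otimes \pi$ to an element of $A \otimes B$ (on algebraic tensors, then by density and a standard $\varepsilon/3$ argument), applies the slice map property for $B$ to produce finite-rank approximants, and then pushes back down using functoriality of the Fubini product. Verifying that this pullback respects $F_B(-)$ and gives $z \in F_{B \otimes M_n}(z)$ is the technical heart of the argument.
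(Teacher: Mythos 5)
Your proposal is correct and follows essentially the same route as the paper: the forward direction via Lemma~\ref{lem main kraus}, and the converse by bundling $x_{1}, \dotsc, x_{n}$ into a single element of $A \otimes (B \otimes \M_{n})$, lifting it through the completely bounded surjection supplied by matrix stability, approximating the lift in $A \otimes B$ using Lemma~\ref{lem main kraus}, and pushing the approximant back down via the intertwining of $\Phi \otimes \id$ with $\id_{A} \otimes \pi$. The only (harmless) difference is that the paper skips your intermediate goal of establishing the slice map property for $B \otimes \M_{n}$ in full and instead approximates the one bundled element directly.
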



\begin{proof} The first statement is clear from Lemma~\ref{lem main kraus}.

Now we prove the second statement. Suppose $B$ is stable. Let $x_{1}, \dotsc, x_{n} \in A \otimes B$ and let $\epsilon > 0$. Let $C \coloneqq B \otimes M_{n}$ and choose a completely bounded surjection $\pi\colon B \to C$.

Let $x \coloneqq x_{1} \oplus \dotsb \oplus x_{n} \in A \otimes B \otimes M_{n} = A \otimes C$. Then there exists $y \in A \otimes B$ such that $x = (\id_{A} \otimes \pi)(y)$. By Lemma~\ref{lem main kraus}, there is $\Phi \in F(A, A)$ such that $\|(\Phi \otimes \id_{B})(y) - y\| < \epsilon/(\|\pi\|_{\cb} + 1)$. Then 
	\begin{align*}
	\|(\Phi \otimes \id_{C})(x) - x\| &= 	\|(\Phi \otimes \id_{C})((\id_{A} \otimes \pi)(y)) - (\id_{A} \otimes \pi)(y)\|\\
	&=\|(\id_{A} \otimes \pi) ((\Phi \otimes \id_{B}) (y) -y)\|\\ 
	&< \epsilon.
	\end{align*}
It follows, for any $1 \le k \le n$, we have $\|(\Phi \otimes \id_{B})(x_{k}) - x_{k}\| < \epsilon$.	
\end{proof}

\begin{defn} Let $\H$ is a separable infinite dimensional Hilbert space. We say that $A$ has the {\em operator approximation property} (OAP) if $A$ has the OAP for $\Cpt(\H)$ and the {\em strong operator approximation property} (SOAP) if $A$ has the OAP for $\B(\H)$.
\end{defn}

\begin{lem} If an operator space has the strong OAP, then it has the OAP for any $B$. \qed
\end{lem}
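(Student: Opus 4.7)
The plan is to reduce the statement to two ingredients: (i) pointwise norm convergence $\Phi_{\alpha} \to \id_{A}$ on $A$, and (ii) a uniform cb-bound $\sup_{\alpha}\|\Phi_{\alpha}\|_{\cb} \le M < \infty$. Once these are in hand, OAP for an arbitrary operator space $B$ follows from a standard $\varepsilon/3$-argument: given $x \in A \otimes B$ and $\varepsilon > 0$, I would approximate $x$ in norm by a finite sum $y = \sum_{i=1}^{n} a_{i} \otimes b_{i} \in A \odot B$, use the uniform bound $\|\Phi_{\alpha} \otimes \id_{B}\| \le \|\Phi_{\alpha}\|_{\cb} \le M$ (from the lemma in Subsection~\ref{sub operator spaces}) to control $(\Phi_{\alpha} \otimes \id_{B} - \id)(x-y)$, and observe that $(\Phi_{\alpha} \otimes \id_{B})(y) - y = \sum_{i}(\Phi_{\alpha}(a_{i})-a_{i}) \otimes b_{i}$ tends to zero in norm by (i), since $\|(\Phi_{\alpha}(a_{i})-a_{i}) \otimes b_{i}\| \le \|\Phi_{\alpha}(a_{i})-a_{i}\|\cdot\|b_{i}\|$.

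To establish (i), I would plug any elementary tensor $a \otimes b$ with $b \neq 0$ into the strong OAP hypothesis on $A \otimes \B(\H)$. The convergence $\Phi_{\alpha}(a) \otimes b \to a \otimes b$ combined with the cross-norm equality $\|(\Phi_{\alpha}(a)-a) \otimes b\| = \|\Phi_{\alpha}(a)-a\|\cdot\|b\|$ immediately forces $\Phi_{\alpha}(a) \to a$ in norm.

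To establish (ii), I would apply the Banach--Steinhaus theorem on the Banach space $A \otimes \B(\H)$: the family $\{\Phi_{\alpha} \otimes \id_{\B(\H)}\}$ is pointwise convergent, hence pointwise bounded, hence uniformly bounded in operator norm by some $M$. Since each $M_{n}$ embeds completely isometrically into $\B(\H)$, the formula $\|\Phi_{\alpha}\|_{\cb} = \sup_{n}\|\Phi_{\alpha} \otimes \id_{M_{n}}\|$ from the earlier lemma in Subsection~\ref{sub operator spaces} gives $\|\Phi_{\alpha}\|_{\cb} \le \|\Phi_{\alpha} \otimes \id_{\B(\H)}\| \le M$, as required. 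No step poses a real obstacle; the main thing to keep in mind is not to conflate point-norm convergence on $A \otimes B$ (the desired conclusion) with uniform convergence on bounded sets, which is precisely why the $\varepsilon/3$-approximation, separating the behaviour on $A \odot B$ from the tail, is needed.
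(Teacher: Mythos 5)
Your step (i) and the concluding $\varepsilon/3$-argument are fine, but step (ii) contains a genuine gap: Banach--Steinhaus does not apply. The approximating family $(\Phi_{\alpha})$ in the definition of the strong OAP is a \emph{net}, and a point-norm convergent net of operators need not be pointwise bounded --- convergence at a point $x$ only controls the tail $\{\alpha \ge \alpha_{0}(x)\}$, and since $\alpha_{0}(x)$ depends on $x$ there is no single tail on which the family is bounded at every point of the unit ball. (For a sequence one could invoke the fact that convergent sequences are bounded, but nothing in the definition lets you assume the net is a sequence, and for nonseparable $A$ it genuinely need not be.) This is not a repairable technicality: if step (ii) went through, you would have produced finite-rank maps with $\sup_{\alpha}\|\Phi_{\alpha}\|_{\cb}<\infty$ converging to $\id_{A}$ in point-norm, i.e.\ you would have shown that the strong OAP implies the CBAP, reversing one of the one-way implications recorded in Remark~\ref{rem CPAP to OAP}; that implication is a well-known hard problem and is certainly not a consequence of a uniform boundedness argument. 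So the uniform cb-bound needed for your $\varepsilon/3$-step is simply not available.

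The lemma nevertheless holds, by an argument that requires no uniform bound. Fix $x \in A \otimes B$ and choose $y_{n} \in A \odot B$ with $y_{n} \to x$; let $B_{0} \subseteq B$ be the closed linear span of the countably many second tensor factors occurring in the $y_{n}$, so that $B_{0}$ is a separable operator space and $x \in A \otimes B_{0}$. Every separable operator space embeds completely isometrically into $\B(\H)$ with $\H$ separable and infinite dimensional, and by the Remark in Subsection~\ref{sub operator spaces} the norm on $A \odot B_{0}$, hence on $A \otimes B_{0}$, is unchanged under this re-embedding. Thus $A \otimes B_{0}$ sits isometrically inside $A \otimes \B(\H)$; the map $\Phi_{\alpha} \otimes \id_{\B(\H)}$ restricts on it to $\Phi_{\alpha} \otimes \id_{B_{0}}$ (check this on $A \odot B_{0}$ and extend by continuity of both maps), and the strong OAP applied to the image of $x$ in $A \otimes \B(\H)$ gives $(\Phi_{\alpha} \otimes \id_{B})(x) = (\Phi_{\alpha} \otimes \id_{B_{0}})(x) \to x$ directly, one $x$ at a time. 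This per-element separable reduction replaces both your uniform bound and the $\varepsilon/3$-argument.
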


\begin{rem}\label{rem CPAP to OAP} We have the implications
	\begin{center}
	CPAP $\implies$ CBAP $\implies$ strong OAP $\implies$ OAP.
	\end{center}
\end{rem}

\begin{defn}[({\cite[Definition 1.1]{MR1220905}})] A countable discrete group $G$ has the {\em approximation property} (AP) if the constant function $1$ is in the $\sigma(M_{0}A(G), Q(G))$-closure of $A(G)$ in $M_{0}A(G)$, where $A(G)$ is the Fourier algebra of $G$ and $M_{0}A(G)$ is the space of completely bounded Fourier multipliers of $G$ and $Q(G)$ is the standard predual of $M_{0}A(G)$.  
\end{defn}

\begin{thm}[({\cite[Theorem 2.1]{MR1220905}})]\label{thm HK} A countable discrete group $G$ has the AP if and only if its reduced group \cast-algebra $\Cl(G)$ has the (strong) OAP.
\end{thm}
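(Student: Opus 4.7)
The plan is to set up the standard correspondence between Fourier multipliers on $G$ and completely bounded maps on $\Cl(G)$, and translate the approximation conditions across this correspondence in each direction. For each $u \in M_{0}A(G)$, the Fourier multiplier $m_{u} \colon \Cl(G) \to \Cl(G)$ extending $\lambda_{g} \mapsto u(g) \lambda_{g}$ is completely bounded with $\|m_{u}\|_{\cb} = \|u\|_{M_{0}A(G)}$, and has finite rank whenever $u$ is finitely supported. The crucial input is that the predual $Q(G)$ is constructed precisely so that $u_{\alpha} \to u$ in $\sigma(M_{0}A(G), Q(G))$ is equivalent to $m_{u_{\alpha}} \otimes \id_{\B(\H)} \to m_{u} \otimes \id_{\B(\H)}$ pointwise in the $\sigma$-weak topology on $\Cl(G) \otimes \B(\H)$, for $\H$ a separable infinite-dimensional Hilbert space.

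For the forward direction, I would take a net $u_{\alpha} \in A(G)$ with $u_{\alpha} \to 1$ in $\sigma(M_{0}A(G), Q(G))$, truncate each $u_{\alpha}$ to finite support using density of the finitely supported elements in $A(G)$, and obtain a net of finite-rank completely bounded maps $\Phi_{\alpha} \coloneqq m_{u_{\alpha}}$ on $\Cl(G)$. The $Q(G)$-characterisation then yields $\Phi_{\alpha} \otimes \id_{\B(\H)} \to \id$ pointwise in the $\sigma$-weak topology; a standard Hahn--Banach / Mazur argument (passing to convex combinations of the $\Phi_{\alpha}$) upgrades this to pointwise norm convergence, which is exactly the strong OAP.

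For the reverse direction, I would start with a net $\Phi_{\alpha}$ of finite-rank completely bounded maps with $\Phi_{\alpha} \otimes \id_{\B(\H)} \to \id$ in point-norm topology and extract a net of finite-rank Fourier multipliers approximating the identity on $\Cl(G)$. The efficient route is via the slice map property: by Lemma~\ref{lem main kraus}, the strong OAP for $\Cl(G)$ implies that $\Cl(G)$ has the slice map property for $\B(\H)$. Applying this slice map property to elements built from the coproduct structure on $\Cl(G)$ (for instance, objects in $\Cl(G) \otimes \B(\ell^{2}(G))$ of the form $\sum_{g \in F} \lambda_{g} \otimes \lambda_{g}$ for finite $F \subseteq G$) produces a bounded net of finite-rank Fourier multipliers $m_{u_{\alpha}}$, and one verifies $u_{\alpha} \to 1$ in $\sigma(M_{0}A(G), Q(G))$ by unwinding the same duality.

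The main obstacle is the reverse direction: extracting Fourier multipliers of controlled norm from arbitrary finite-rank completely bounded maps. Since $G$ need not be amenable, one cannot simply average $\Phi_{\alpha}$ over the conjugation action of $G$ on $\Cl(G)$ to convert it into a multiplier. Overcoming this relies on working with the tensored map $\Phi_{\alpha} \otimes \id_{\B(\H)}$ rather than with $\Phi_{\alpha}$ alone, and on the careful identification of $Q(G)$ as a suitable quotient of a projective tensor product so that the pairing with $M_{0}A(G)$ captures exactly the behaviour of multipliers on tensored operators. Both implications ultimately reduce to the identification of $M_{0}A(G)$ with the space of $G$-invariant completely bounded Herz--Schur multipliers on $\Cl(G)$, which is the technical heart of \cite{MR1220905}.
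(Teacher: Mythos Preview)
The paper does not prove this theorem; its entire proof reads ``See the original article \cite{MR1220905} or \cite[Section 12.4]{MR2391387}.'' Your sketch follows the standard argument found in those references, so in effect you have supplied what the paper deliberately omits rather than offered an alternative.

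As a plan, what you wrote is sound and matches the Haagerup--Kraus approach: the bridge between $M_{0}A(G)$ and $CB(\Cl(G))$ via Fourier multipliers $m_{u}$, the use of finitely supported $u \in A(G)$ to obtain finite-rank maps, the convexity upgrade from weak-type to norm convergence, and your identification of the reverse direction's real difficulty (extracting multipliers from arbitrary finite-rank maps without an invariant mean to average against) are all exactly as in \cite{MR1220905}. Your closing remark is accurate: the substantive work is the identification of $M_{0}A(G)$ with completely bounded Herz--Schur multipliers and the construction of $Q(G)$ so that $\sigma(M_{0}A(G), Q(G))$-convergence matches the stable point-weak topology on multipliers; your sketch correctly defers those details to the cited sources, just as the paper itself does.
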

\begin{proof} See the original article \cite{MR1220905} or \cite[Section 12.4]{MR2391387}. 
\end{proof}

\section{The Invariant Translation Approximation Property}\label{section ITAP}

In this section, we apply the results in Section~\ref{section Fubini} to the problem of studying the invariant part of the uniform Roe algebra.

\subsection{Uniform Roe Algebras}\label{sub roe algebra}
\begin{defn} We say that a (countable discrete) metric space is of {\em bounded geometry} if for any $R > 0$, there is $N_{R} < \infty$ such that all balls of radius at most $R$ have at most $N_{R}$ elements. 
\end{defn}

\begin{defn}[({cf.\ \cite[Section 4.1]{MR2007488} and \cite[Section 2]{MR2215118}})]
Let $(X, d)$ be a metric space of bounded geometry and let $S \subseteq \B(\H)$ be a subset. For $R > 0$ and $M > 0$, let $A_{R, M}(X, S)$ denote the set of $X \times X$-matrices with values in $S$ satisfying
	\begin{enumerate}
	\item for any $x_{1}$, $x_{2 }\in X$ with $d(x_{1}, x_{2}) > R$, we have $a(x_{1}, x_{2}) = 0$ and
	\item for any $x_{1}$, $x_{2 }\in X$, we have $\|a(x_{1}, x_{2})\|_{\B(\H)} \le M$.
	\end{enumerate}
Let $A(X, S) \coloneqq \cup_{R, M} A_{R, M}(X, S)$.	For $S = \C$, we write $A_{R, M}(X)$ and $A(X)$.
\end{defn}

\begin{lem}\label{lem norm bound} Under the natural action, elements of $A_{R, M}(X, S)$ act on $l^{2}(X, \H) \cong l^{2}X \otimes \H$ as bounded operators of norm at most $M \cdot N_{R}$. 
\end{lem}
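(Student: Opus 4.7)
The plan is to verify the estimate directly via a Schur-type Cauchy--Schwarz bound on the standard action. For $\xi = (\xi_{x})_{x \in X} \in l^{2}(X, \H)$, the matrix $a \in A_{R, M}(X, S)$ acts by $(a\xi)(y) = \sum_{x \in X} a(y, x) \xi_{x}$. Condition (1) restricts this sum to the ball $B(y, R) \coloneqq \{x : d(x, y) \le R\}$, which by bounded geometry has at most $N_{R}$ elements; so the sum has only finitely many nonzero terms and $(a\xi)(y)$ is a well-defined vector of $\H$.

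Next I would apply Cauchy--Schwarz to the finite sum over $B(y, R)$ with the trivial weight $1$, combined with condition (2):
\begin{equation*}
\|(a\xi)(y)\|_{\H} \le \sum_{x \in B(y, R)} \|a(y, x)\|_{\B(\H)} \, \|\xi_{x}\|_{\H} \le M \sqrt{N_{R}} \left(\sum_{x \in B(y, R)} \|\xi_{x}\|^{2}\right)^{1/2}.
\end{equation*}
Squaring and summing over $y \in X$, and then swapping the order of summation, yields
\begin{equation*}
\|a\xi\|^{2} \le M^{2} N_{R} \sum_{y \in X} \sum_{x \in B(y, R)} \|\xi_{x}\|^{2} = M^{2} N_{R} \sum_{x \in X} \|\xi_{x}\|^{2} \cdot \#\{y : d(x, y) \le R\} \le M^{2} N_{R}^{2} \, \|\xi\|^{2},
\end{equation*}
where the second invocation of bounded geometry bounds the symmetric ball around $x$. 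Taking square roots gives $\|a\| \le M N_{R}$, which in particular shows $a\xi \in l^{2}(X, \H)$ and delivers the advertised norm estimate.

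The argument is routine: there is no real obstacle, the key point being that bounded geometry supplies \emph{both} row and column sparsity of the matrix, each contributing a factor of $\sqrt{N_{R}}$ to the final bound. An alternative route would decompose $a$ via K\H{o}nig's edge-colouring theorem into at most $N_{R}$ summands, each with at most one nonzero entry per row and per column (hence of operator norm at most $M$); but the direct Cauchy--Schwarz estimate above is cleaner and self-contained.
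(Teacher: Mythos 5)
Your argument is correct and is essentially the paper's own proof: the same Cauchy--Schwarz estimate on each row (giving one factor of $\sqrt{N_{R}}$ from the at most $N_{R}$ nonzero entries), followed by summing over the first index and interchanging the sums to extract the second factor of $N_{R}$. The remark about K\H{o}nig's theorem is a valid alternative but is not needed.
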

\begin{proof} The action is given by
	\begin{equation*}
	(a \xi)(x_{1}) \coloneqq \sum_{x_{2}} a(x_{1}, x_{2}) \xi(x_{2}), \quad \xi \in l^{2}X \otimes \H. 
	\end{equation*}
Thus \begin{align*}
	\|(a \xi)(x_{1})\| &\le \sum_{d(x_{1}, x_{2})\le R} \|a(x_{1}, x_{2}) \xi(x_{2})\|\\
	&\le M \sum_{d(x_{1}, x_{2}) \le R} \|\xi(x_{2})\|\\
	&\le M \left(N_{R} \sum_{d(x_{1}, x_{2}) \le R} \|\xi(x_{2})\|^{2}\right)^{1/2}.
	\end{align*}
It follows that
	\begin{align*}
	\|a \xi\|^{2} &= \sum_{x_{1}} \|(a\xi)(x_{1})\|^{2}\\ 
	&\le M^{2} N_{R}  \sum_{x_{1}}\sum_{d(x_{1}, x_{2}) \le R} \|\xi(x_{2})\|^{2}\\
	&\le M^{2}N_{R}^{2} \sum_{x_{2}} \|\xi(x_{2})\|^{2}\\
	&= M^{2} N_{R}^{2} \|\xi\|^{2}.
	\end{align*}	
\end{proof}

Thus we identify $A(X, S) \subseteq \B(l^{2}X \otimes \H)$.

\begin{lem}\label{lem A inclusions} For any $S \subseteq \B(\H)$, we have
	\begin{equation*}
	A(X) \times S \subseteq A(X, S) \subseteq A(X, \overline S) \subseteq \overline{A(X, S)} 
	\end{equation*}
in $\B(l^{2}X \otimes \H)$.	
\end{lem}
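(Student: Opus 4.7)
The plan is to verify the three inclusions separately, with the first two being essentially formal and the third requiring Lemma~\ref{lem norm bound} together with the bounded geometry condition.

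For the inclusion $A(X) \times S \subseteq A(X, S)$, I would take $a \in A_{R, M}(X)$ and $s \in S$, and identify the operator $a \otimes s$ on $l^{2}X \otimes \H$ with the $X \times X$-matrix whose $(x_{1}, x_{2})$-entry is $a(x_{1}, x_{2}) s \in \B(\H)$. Since $a(x_{1}, x_{2}) = 0$ whenever $d(x_{1}, x_{2}) > R$, the support condition is inherited; and since $|a(x_{1}, x_{2})| \le M$, the entry norm bound $\|a(x_{1}, x_{2}) s\| \le M \|s\|$ holds, so $a \otimes s \in A_{R, M\|s\|}(X, S)$ (using that $S$ is closed under scalar multiplication).

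The second inclusion $A(X, S) \subseteq A(X, \overline{S})$ is immediate from $S \subseteq \overline{S}$, as all entries remain in the larger set and the support and norm conditions are unaffected.

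The third inclusion $A(X, \overline{S}) \subseteq \overline{A(X, S)}$ is the main point and is where Lemma~\ref{lem norm bound} is used. Given $a \in A_{R, M}(X, \overline{S})$ and $\epsilon > 0$, I would define a matrix $b$ by choosing, for each pair $(x_{1}, x_{2})$ with $d(x_{1}, x_{2}) \le R$, an element $b(x_{1}, x_{2}) \in S$ satisfying $\|a(x_{1}, x_{2}) - b(x_{1}, x_{2})\| < \epsilon$, and setting $b(x_{1}, x_{2}) = 0$ otherwise. Then $b \in A_{R, M + \epsilon}(X, S)$, while the difference $a - b$ has entries of norm at most $\epsilon$ supported in the band $d(x_{1}, x_{2}) \le R$; applying Lemma~\ref{lem norm bound} to $a - b \in A_{R, \epsilon}(X, \B(\H))$ yields $\|a - b\| \le \epsilon \cdot N_{R}$. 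Since $N_{R}$ is fixed and $\epsilon$ is arbitrary, $a$ lies in the operator-norm closure of $A(X, S)$.

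The conceptual obstacle — really the only substantive point — is the third step: the entrywise approximation in $\overline{S}$ is pointwise in nature, whereas we want an operator-norm approximation of an infinite matrix. Bounded geometry resolves this, since the bandwidth restriction forces each row and column of $a - b$ to have at most $N_{R}$ nonzero entries, so uniform control of the entries (via $\epsilon/N_{R}$ in place of $\epsilon$, or absorbing $N_{R}$ at the end) translates directly into operator-norm control via Lemma~\ref{lem norm bound}.
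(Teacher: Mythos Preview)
Your proof is correct and follows essentially the same approach as the paper's: the paper likewise declares the first two inclusions clear and, for the third, approximates each entry of $a \in A_{R,M}(X,\overline{S})$ to within $1/n$ by an element of $S$, producing $a_n \in A_{R,M+1}(X,S)$ with $\|a_n - a\| \le N_R/n$ via Lemma~\ref{lem norm bound}. The only cosmetic difference is that the paper writes this as a sequence in $n$ rather than a single $\epsilon$-approximation.
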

\begin{proof} Only the last inclusion needs checking. Take $a \in A_{R, M}(X, \overline S)$. For each positive integer $n \ge 1$, we define $a_{n} \in A(X, S)$ as follows. For $x_{1}$, $x_{2} \in X$, if $d(x_{1}, x_{2}) > R$, then let $a_{n}(x_{1}, x_{2}) = 0$, and if $d(x_{1}, x_{2}) \le R$, then choose $a_{n}(x_{1}, x_{2}) \in S$ to satisfy $\|a_{n}(x_{1}, x_{2}) - a(x_{1}, x_{2})\| \le 1/n$. Then $a_{n} \in A_{R, M + 1}(X, S)$ and $\|a_{n} - a\| \le N_{R}/n$ by Lemma~\ref{lem norm bound}. Hence the sequence $a_{n}$ converges to $a$ in $\B(l^{2}X \otimes \H)$ as $n \to \infty$ and thus $a \in \overline{A(X, S)}$.
\end{proof}

\begin{defn}[({cf.\ \cite[Section 4.4]{MR2007488} \cite{MR2215118}})] Let $X$ be a metric space of bounded geometry and let $S \subseteq \B(H)$ be an operator space. The {\em uniform Roe operator space} $\Cu(X, S)$ is the closure of $A(X, S)$ in $\B(l^{2}X \otimes \H)$. For $S = \C$, we write $\Cu(X)$.
\end{defn}

\begin{lem} For any operator space $S \subseteq \B(\H)$, we have
	\begin{equation*}
	\Cu(X) \otimes S \subseteq \Cu(X, S).
	\end{equation*}
\end{lem}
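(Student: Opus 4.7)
The plan is to combine Lemma~\ref{lem A inclusions} with an easy density argument. The two inclusions $A(X) \times S \subseteq A(X,S)$ and $A(X,S) \subseteq \Cu(X,S)$ are already in hand, and the tensor product $\Cu(X) \otimes S$ is by definition a closure in $\B(l^{2}X \otimes \H)$, so the whole statement should reduce to checking that $A(X) \odot S$ sits inside $A(X,S)$ and is dense in $\Cu(X) \odot S$.

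First I would verify that $A(X,S)$ is a linear subspace of $\B(l^{2}X \otimes \H)$. Indeed, if $a \in A_{R_{1},M_{1}}(X,S)$ and $b \in A_{R_{2},M_{2}}(X,S)$, then $a + b$ has entries in $S$ (since $S$ is a subspace), vanishes whenever $d(x_{1},x_{2}) > \max(R_{1},R_{2})$, and has entries of norm at most $M_{1}+M_{2}$; so $a+b \in A_{\max(R_{1},R_{2}),M_{1}+M_{2}}(X,S)$. Scalar multiplication is similar. Combined with $A(X) \times S \subseteq A(X,S)$ from Lemma~\ref{lem A inclusions}, this yields
\begin{equation*}
A(X) \odot S \;\subseteq\; A(X,S) \;\subseteq\; \Cu(X,S),
\end{equation*}
and taking closures in $\B(l^{2}X \otimes \H)$ gives $\overline{A(X) \odot S} \subseteq \Cu(X,S)$.

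It remains to identify $\overline{A(X) \odot S}$ with $\Cu(X) \otimes S$. By definition $\Cu(X) \otimes S = \overline{\Cu(X) \odot S}$, so it suffices to show $A(X) \odot S$ is norm-dense in $\Cu(X) \odot S$. Given a finite sum $\sum_{i=1}^{n} a_{i} \otimes s_{i}$ with $a_{i} \in \Cu(X)$ and $s_{i} \in S$, choose $a_{i,k} \in A(X)$ with $a_{i,k} \to a_{i}$ in norm; then $\sum_{i} a_{i,k} \otimes s_{i}$ lies in $A(X) \odot S$ and converges in norm to $\sum_{i} a_{i} \otimes s_{i}$ as $k \to \infty$. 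This gives the desired density, hence $\Cu(X) \otimes S = \overline{A(X) \odot S} \subseteq \Cu(X,S)$.

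There is no real obstacle here beyond bookkeeping with the definitions; the potentially subtle point — comparing the two different ambient closures used to define $\Cu(X) \otimes S$ and $\Cu(X,S)$ — is handled automatically since both closures are taken in the same space $\B(l^{2}X \otimes \H)$, thanks to the embedding provided by Lemma~\ref{lem norm bound}.
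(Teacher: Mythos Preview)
Your argument is correct and is exactly the unpacking of the paper's one-line proof ``Follows from Lemma~\ref{lem A inclusions}'': from $A(X)\times S\subseteq A(X,S)$ you pass to $A(X)\odot S\subseteq A(X,S)$ by linearity, then take closures in $\B(l^{2}X\otimes\H)$ and observe $\overline{A(X)\odot S}=\Cu(X)\otimes S$. There is no difference in approach, only in the level of detail.
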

\begin{proof} Follows from Lemma~\ref{lem A inclusions}.
\end{proof}

\subsection{Products}\label{sub products}

For metric spaces $(X, d_{X})$ and $(Y, d_{Y})$, we equip the product $X \times Y$ with the metric 
	\begin{equation*}
	d_{X \times Y}((x_{1}, y_{1}), (x_{2}, y_{2})) \coloneqq \max\{d_{X}(x_{1}, x_{2}), d_{Y}(y_{1}, y_{2})\}.
	\end{equation*}
If $X$ and $Y$ are of bounded geometry, then so is $X \times Y$.	
	
\begin{lem}\label{lem in prod} Let $X$ and $Y$ be metric spaces of bounded geometry and let $S \subseteq \B(\H)$ be a subset. Then for $R$, $R' > 0$ and $M$, $M' > 0$ we have a natural inclusion
	\begin{equation*}
	A_{R, M}(X) \times A_{R', M'}(Y, S) \subseteq A_{R'', M''}(X \times Y, S),
	\end{equation*}
where $R'' \coloneqq \max\{R, R'\}$ and $M'' \coloneqq M \cdot M'$. In particular, we have
	\begin{equation*}
	A(X) \times A(Y, S) \subseteq A(X \times Y, S)
	\end{equation*}
in $\B(l^{2}X \otimes l^{2}Y \otimes \H)$.	
\end{lem}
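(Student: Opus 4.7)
The plan is to show the inclusion at the matrix level by unpacking what the tensor product operator $a \otimes b$ looks like as an $(X \times Y) \times (X \times Y)$-matrix with values in $S$.

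First, I would set up the identification $l^{2}(X \times Y) \otimes \H \cong l^{2}X \otimes l^{2}Y \otimes \H$ via $\delta_{(x,y)} \otimes \xi \leftrightarrow \delta_{x} \otimes \delta_{y} \otimes \xi$. For $a \in A_{R, M}(X)$ (acting on $l^{2}X$) and $b \in A_{R', M'}(Y, S)$ (acting on $l^{2}Y \otimes \H$), I would compute that the operator $a \otimes b$ acts on $l^{2}(X \times Y) \otimes \H$ as the matrix
\begin{equation*}
(a \otimes b)\bigl((x_{1}, y_{1}), (x_{2}, y_{2})\bigr) = a(x_{1}, x_{2}) \cdot b(y_{1}, y_{2}) \in S,
\end{equation*}
where $a(x_{1}, x_{2}) \in \C$ multiplies $b(y_{1}, y_{2}) \in S$ in the natural way. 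This identification is routine from the definition of the tensor product of operators and the fact that $a$ has scalar entries.

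Next I would verify the two defining conditions of $A_{R'', M''}(X \times Y, S)$ with $R'' = \max\{R, R'\}$ and $M'' = M \cdot M'$. For the support condition, if $d_{X \times Y}((x_{1}, y_{1}), (x_{2}, y_{2})) > R''$, then by definition of the max metric, either $d_{X}(x_{1}, x_{2}) > R$, giving $a(x_{1}, x_{2}) = 0$, or $d_{Y}(y_{1}, y_{2}) > R'$, giving $b(y_{1}, y_{2}) = 0$; in either case the matrix entry vanishes. For the norm bound,
\begin{equation*}
\|a(x_{1}, x_{2}) \cdot b(y_{1}, y_{2})\|_{\B(\H)} \le |a(x_{1}, x_{2})| \cdot \|b(y_{1}, y_{2})\|_{\B(\H)} \le M \cdot M' = M''.
\end{equation*}
This shows $a \otimes b \in A_{R'', M''}(X \times Y, S)$, establishing the first inclusion.

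The ``In particular'' statement then follows by taking the union over all $R, M, R', M'$, since any element of $A(X) \times A(Y, S)$ arises as $a \otimes b$ with $a \in A_{R, M}(X)$ and $b \in A_{R', M'}(Y, S)$ for suitable parameters. The only mild obstacle is book-keeping the identification $l^{2}(X \times Y) \otimes \H \cong l^{2}X \otimes l^{2}Y \otimes \H$ carefully enough to justify the matrix formula for $a \otimes b$; once that is in place, the rest is a direct computation.
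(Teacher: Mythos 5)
Your proposal is correct and follows essentially the same route as the paper: both identify $a \otimes b$ with the $(X\times Y)\times(X\times Y)$-matrix whose entries are $a(x_{1},x_{2})\,b(y_{1},y_{2})$ and then verify the propagation bound via the max metric and the entrywise norm bound $M\cdot M'$. The extra care you take with the identification $l^{2}(X\times Y)\otimes\H \cong l^{2}X\otimes l^{2}Y\otimes\H$ is left implicit in the paper but changes nothing of substance.
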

\begin{proof} Take $a \in A_{R, M}(X)$ and $b \in A_{R', M'}(Y, S)$ and let $R'' = \max\{R, R'\}$ and $M'' = M \cdot M'$. For $x_{1}$, $x_{2} \in X$ and $y_{1}, y_{2} \in Y$, we define 
	\begin{equation*}
	(a \otimes b)((x_{1}, y_{1}), (x_{2}, y_{2})) \coloneqq a(x_{1}, x_{2}) \otimes b(y_{1}, y_{2}).
	\end{equation*}
\begin{enumerate}
\item For $x_{1}$, $x_{2} \in X$ and $y_{1}, y_{2} \in Y$ with $d((x_{1}, y_{1}), (x_{2}, y_{2})) > R'' = \max\{R, R'\}$, we have either $d(x_{1}, x_{2}) > R'' \ge R$ thus $a(x_{1}, x_{2}) = 0$, or $d(y_{1}, y_{2}) > R'' \ge R'$ thus $b(y_{1}, y_{2}) = 0$, hence $(a \otimes b)((x_{1}, y_{1}), (x_{2}, y_{2})) = a(x_{1}, x_{2}) \otimes b(y_{1}, y_{2}) = 0$ and
\item For $x_{1}$, $x_{2} \in X$ and $y_{1}, y_{2} \in Y$, we have $\|(a \otimes b)((x_{1}, y_{1}), (x_{2}, y_{2}))\| = \|a(x_{1}, x_{2})\| \cdot \|b(y_{1}, y_{2}) \| \le M \cdot M' = M''$.
\end{enumerate}
Hence $a \otimes b$ belongs to $A_{R'', M''}(X \times Y, S)$. The last statement is clear.
\end{proof}

\begin{lem}\label{lem prod in} Let $X$ and $Y$ be metric spaces of bounded geometry and let $S \subseteq \B(\H)$ be a subset. Then for $R > 0$ and $M > 0$, we have a natural inclusion
	\begin{equation*}
	A_{R, M}(X\times Y, S) \subseteq A_{R, M'}(X, A_{R, M}(Y, S)),
	\end{equation*}
where $M' \coloneqq  M \cdot N^{X}_{R}$.	 In particular, we have
	\begin{equation*}
	A(X \times Y, S) \subseteq A(X, A(Y, S))
	\end{equation*}
in $ \B(l^{2}X \otimes l^{2}Y \otimes \H)$.
\end{lem}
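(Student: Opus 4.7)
The plan is to define a natural uncurrying map sending a matrix indexed by $X \times Y$ with entries in $S$ to a matrix indexed by $X$ whose entries are themselves matrices indexed by $Y$ with entries in $S$. Explicitly, given $c \in A_{R, M}(X \times Y, S)$, I would set
\[
\tilde c(x_{1}, x_{2})(y_{1}, y_{2}) \coloneqq c((x_{1}, y_{1}), (x_{2}, y_{2})).
\]

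First I would check that for each fixed $(x_{1}, x_{2}) \in X \times X$, the matrix $\tilde c(x_{1}, x_{2})$ belongs to $A_{R, M}(Y, S)$. The support condition uses that the product metric is the maximum of the factor metrics: if $d_{Y}(y_{1}, y_{2}) > R$, then $d_{X \times Y}((x_{1}, y_{1}),(x_{2}, y_{2})) > R$ as well, so $c$ vanishes at that entry. The uniform entrywise bound $\|\tilde c(x_{1}, x_{2})(y_{1}, y_{2})\| \le M$ is inherited directly from $c$. Hence $\tilde c$ genuinely takes values in $A_{R, M}(Y, S) \subseteq \B(l^{2}Y \otimes \H)$.

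Next I would verify the outer conditions, i.e.\ that $\tilde c \in A_{R, M'}(X, A_{R, M}(Y, S))$. The $X$-support condition is symmetric to the previous step: if $d_{X}(x_{1}, x_{2}) > R$ then every entry of $\tilde c(x_{1}, x_{2})$ vanishes, so $\tilde c(x_{1}, x_{2}) = 0$. For the uniform bound on the \emph{operator} norm of each entry, I would apply Lemma~\ref{lem norm bound} to $\tilde c(x_{1}, x_{2}) \in A_{R, M}(Y, S)$, which yields a bound $M'$ of the form $M \cdot N_{R}$ (with $N_{R}$ computed for the appropriate factor).

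Finally, the inclusion at the level of operators on $\B(l^{2}X \otimes l^{2}Y \otimes \H)$ requires identifying the action of $c$ on $l^{2}(X \times Y) \otimes \H$ with that of $\tilde c$ on $l^{2}X \otimes (l^{2}Y \otimes \H)$ under the canonical unitary identification; this is a direct reindexing of the defining sum from Lemma~\ref{lem norm bound}, noting that the ``inner'' action of $\tilde c(x_{1}, x_{2})$ on $l^{2}Y \otimes \H$ is precisely the restriction of $c$ to the $(x_{1}, x_{2})$-fiber. The ``in particular'' statement for $A(X \times Y, S) \subseteq A(X, A(Y, S))$ then follows by taking unions over $R$ and $M$. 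The only mild obstacle is keeping the constants straight in the two applications of the norm bound (once for the inner matrix, once implicit in interpreting $A_{R, M'}(X, \cdot)$ as sitting inside $\B(l^{2}X \otimes l^{2}Y \otimes \H)$), but this is routine bookkeeping rather than a genuine difficulty.
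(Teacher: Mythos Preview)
Your proposal is correct and follows essentially the same route as the paper: define the uncurrying $b_{x_{1},x_{2}}(y_{1},y_{2}) = a((x_{1},y_{1}),(x_{2},y_{2}))$, check the inner propagation and entrywise bound to land in $A_{R,M}(Y,S)$, then check the outer propagation and invoke Lemma~\ref{lem norm bound} for the operator-norm bound on each $b_{x_{1},x_{2}}$. Your additional remark about verifying that the two actions on $l^{2}X \otimes l^{2}Y \otimes \H$ agree under the canonical reindexing is a detail the paper leaves implicit; your caution about which factor's $N_{R}$ appears is well placed, since Lemma~\ref{lem norm bound} applied to $A_{R,M}(Y,S)$ naturally yields $M \cdot N^{Y}_{R}$.
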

\begin{proof} Take $a \in A_{R, M}(X \times Y, S)$. Fix $x_{1}$, $x_{2} \in X$ and consider $b_{x_{1}, x_{2}}(y_{1}, y_{2}) \coloneqq a((x_{1}, y_{1}), (x_{2}, y_{2}))$. First we show that $b_{x_{1}, x_{2}}$ is an element of $A_{R, M}(Y, S)$. 
\begin{enumerate} 
\item For $y_{1}$, $y_{2} \in Y$ with $d(y_{1}, y_{2}) > R$, we have $d((x_{1}, y_{1}), (x_{2}, y_{2})) \ge d(y_{1}, y_{2}) > R$, thus $b_{x_{1}, x_{2}}(y_{1}, y_{2}) = a((x_{1}, y_{1}), (x_{2}, y_{2})) = 0$.
\item For $y_{1}$, $y_{2} \in Y$, we have $\|b_{x_{1}, x_{2}}(y_{1}, y_{2}) \| = \|a((x_{1}, y_{1}), (x_{2}, y_{2}))\| \le M$.
\end{enumerate}
Now we show that $b$ is an element of $A_{R, M'}(X, A_{R, M}(Y, S))$.
\begin{enumerate} 
\item For $x_{1}$, $x_{2} \in X$ with $d(x_{1}, x_{2}) > R$ and for any $y_{1}$, $y_{2} \in Y$, we have $d((x_{1}, y_{1}), (x_{2}, y_{2})) \ge d(x_{1}, x_{2}) > R$, thus 
	\begin{equation*}
	b_{x_{1}, x_{2}}(y_{1}, y_{2}) = a((x_{1}, y_{1}), (x_{2}, y_{2})) = 0
	\end{equation*}
	 i.e.\ $b_{x_{1}, x_{2}} = 0$
\item For $x_{1}$, $x_{2} \in X$, we have $\|b_{x_{1}, x_{2}}\| \le M \cdot N^{X}_{R} = M'$ by Lemma~\ref{lem norm bound}.
\end{enumerate}

\end{proof}

\begin{thm}\label{thm prod} Let $X$ and $Y$ be metric spaces of bounded geometry and let $S \subseteq \B(H)$ be an operator space. Then we have a natural inclusion
	\begin{equation*}
	\Cu(X) \otimes \Cu(Y, S) \subseteq \Cu(X \times Y, S) \subseteq \Cu(X, \Cu(Y, S)).
	\end{equation*}
\end{thm}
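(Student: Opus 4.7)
The plan is to obtain both inclusions by taking norm closures in the inclusions already established at the level of finite propagation matrices in Lemmas~\ref{lem in prod} and~\ref{lem prod in}. Throughout, I identify $l^{2}(X \times Y) \otimes \H \cong l^{2}X \otimes l^{2}Y \otimes \H \cong l^{2}X \otimes (l^{2}Y \otimes \H)$ so that all closures take place inside the same $\B(l^{2}X \otimes l^{2}Y \otimes \H)$.

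For the first inclusion, Lemma~\ref{lem in prod} gives $A(X) \times A(Y, S) \subseteq A(X \times Y, S)$, and taking linear spans on the left yields $A(X) \odot A(Y, S) \subseteq A(X \times Y, S)$. Taking norm closures yields $\overline{A(X) \odot A(Y, S)} \subseteq \Cu(X \times Y, S)$ by definition of the uniform Roe operator space. It then remains to identify $\overline{A(X) \odot A(Y, S)}$ with $\Cu(X) \otimes \Cu(Y, S) = \overline{\Cu(X) \odot \Cu(Y, S)}$. One inclusion is obvious from $A(X) \subseteq \Cu(X)$ and $A(Y,S) \subseteq \Cu(Y,S)$. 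The reverse inclusion follows from a standard $\varepsilon/3$ argument: every elementary tensor $a \otimes b$ with $a \in \Cu(X)$ and $b \in \Cu(Y,S)$ is a norm limit of $a_{n} \otimes b_{n}$ with $a_{n} \in A(X)$ and $b_{n} \in A(Y,S)$, and by linearity the same holds for finite sums.

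For the second inclusion, Lemma~\ref{lem prod in} gives $A(X \times Y, S) \subseteq A(X, A(Y, S))$. Since $A(Y, S) \subseteq \Cu(Y, S)$, one has a tautological inclusion $A(X, A(Y, S)) \subseteq A(X, \Cu(Y, S))$: an $X \times X$-matrix whose entries lie in $A(Y, S)$ also has entries in $\Cu(Y, S)$, with the same propagation and norm bounds. Composing these two inclusions and taking the norm closure in $\B(l^{2}X \otimes l^{2}Y \otimes \H)$ produces $\Cu(X \times Y, S) \subseteq \Cu(X, \Cu(Y, S))$, as desired.

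No single step presents a genuine obstacle; the bookkeeping to keep in mind is simply that all three algebras are realised on the same Hilbert space so that one norm closure suffices, and that the closure of $A(X) \odot A(Y, S)$ is honestly the operator space tensor product $\Cu(X) \otimes \Cu(Y, S)$, not merely a closed subspace of it.
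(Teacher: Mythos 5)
Your proof is correct and follows exactly the route the paper intends: the paper's own proof consists of the single line ``Follows from Lemmas~\ref{lem in prod} and~\ref{lem prod in}'', and your argument is precisely the closure-taking that this line leaves implicit. The two points you flag -- realising everything on the common Hilbert space $l^{2}X \otimes l^{2}Y \otimes \H$ and identifying $\overline{A(X) \odot A(Y,S)}$ with $\Cu(X) \otimes \Cu(Y,S)$ -- are the right details to check, and your handling of both is sound.
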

\begin{proof} Follows from Lemmas \ref{lem in prod} and \ref{lem prod in}
\end{proof}

\subsection{The Invariant Translation Approximation Property}\label{sub itap}

Let $G$ be a countable discrete group. Let $|G|$ denote the metric space of bounded geometry on $G$ associated to a proper length function. Then $G$ acts on $|G|$ isometrically by right translations.

\begin{ex} Let $G$ be a countable discrete group. Then $A(G)$ is the $*$-algebra generated by $\C[G]$ and $l^{\infty}(G)$, and therefore $\Cu|G| = C^{*}(l^{\infty}(G), \lambda(G))$. It is well-known that $\Cu|G| \cong l^{\infty}(G) \rrtimes G$ and $(\Cu|G|)^{G} = L(G) \cap \Cu|G|$, where $L(G)$ denote the von Neumann algebra generated by $\Cl(G)$. See \cite[Section 5.1]{MR2391387}.
\end{ex}

\begin{defn} Let $S$ be an operator space. We say $G$ has the {\em invariant translation approximation property} (ITAP) for $S$, if we have
	\begin{equation*}
	\Cl(G) \otimes S = \Cu(|G|, S)^{G}. 
	\end{equation*} 
\end{defn}

\begin{defn} We say $G$ has the {\em (strong) invariant translation approximation property} if it has the ITAP for (all operator spaces $S \subseteq \B(\H)$) $\C$. 
\end{defn}

The following theorem connects the strong ITAP to the AP.
\begin{thm}[({\cite{MR2215118}})]\label{thm Z} Let $G$ be a countable discrete group. Then $G$ has the AP if and only if $G$ is exact and has the strong ITAP. \qed
\end{thm}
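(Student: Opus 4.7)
The plan is to express the invariant subspace $\Cu(|G|, S)^{G}$ as a Fubini product and apply the functorial results of Section~\ref{section Fubini}, combined with Theorem~\ref{thm HK}. The cornerstone is a preliminary reduction: if $G$ is exact, then by Ozawa's theorem $\Cu(|G|) \cong l^{\infty}(G) \rrtimes G$ is nuclear, hence has the slice map property for every operator space (Theorem~\ref{thm kraus}); a finite-propagation approximation argument, mirroring Lemma~\ref{lem A inclusions}, then promotes the inclusion from Theorem~\ref{thm prod} to an equality $\Cu(|G|, S) = \Cu(|G|) \otimes S$.

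For the forward direction, assume $G$ has AP. By Theorem~\ref{thm HK}, $\Cl(G)$ has the strong OAP, hence by Theorem~\ref{thm kraus} the slice map property for every operator space; and by Haagerup--Kraus, AP implies exactness. Proposition~\ref{prop inv} (with trivial $H$-action on $S$) gives
\begin{equation*}
\Cu(|G|, S)^{G} = (\Cu(|G|) \otimes S)^{G} = F(\Cu(|G|)^{G}, S, \Cu(|G|) \otimes S).
\end{equation*}
The scalar case $\Cu(|G|)^{G} = L(G) \cap \Cu(|G|) = \Cl(G)$ is the classical ITAP for AP-groups, which I would derive by a direct Herz--Schur multiplier approximation using AP. With this in hand, the slice map property of $\Cl(G)$ collapses the Fubini product to $\Cl(G) \otimes S$, giving strong ITAP.

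For the backward direction, assume $G$ is exact and has the strong ITAP. The preliminary reduction gives $\Cu(|G|, S) = \Cu(|G|) \otimes S$; combined with Proposition~\ref{prop inv} and the $S = \C$ case of strong ITAP, this rewrites strong ITAP as $\Cl(G) \otimes S = F(\Cl(G), S, \Cu(|G|) \otimes S)$ for every $S$. Specialising to $S = T \subseteq \B(\H)$ and applying Lemma~\ref{lem aotimes} (valid because $\Cu(|G|)$ has the slice map property for $\B(\H)$ by nuclearity), we obtain $\Cl(G) \otimes T = F(\Cl(G), T, \Cu(|G|) \otimes \B(\H))$. The converse direction of Proposition~\ref{prop trans} then yields the slice map property for the triple $(\Cl(G), T, \Cl(G) \otimes \B(\H))$ for every $T \subseteq \B(\H)$, i.e.\ $\Cl(G)$ has the slice map property for $\B(\H)$. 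Since $\B(\H)$ is matrix stable, Theorem~\ref{thm kraus} yields the strong OAP of $\Cl(G)$, and Theorem~\ref{thm HK} delivers AP.

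The main obstacle is the scalar ITAP $\Cl(G) = L(G) \cap \Cu(|G|)$ for AP-groups, which does not follow formally from the Fubini machinery and is the essential content of \cite{MR2215118}; the remaining steps consist of carefully threading Proposition~\ref{prop inv}, Lemma~\ref{lem aotimes} and Proposition~\ref{prop trans} through the strong ITAP equation, together with the nontrivial input that exactness of $G$ forces $\Cu(|G|, S) = \Cu(|G|) \otimes S$ for \emph{arbitrary} operator spaces $S$.
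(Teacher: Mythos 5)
The paper does not actually prove this theorem---it is quoted from \cite{MR2215118}---so the only thing to assess is whether your argument stands on its own. It does not: the ``preliminary reduction'' $\Cu(|G|, S) = \Cu(|G|) \otimes S$ for exact $G$, on which both directions of your proof lean, is false. Take $G = \Z$ (exact, even amenable) and $S = \B(\H)$ with $\H$ infinite dimensional. The diagonal ($R=0$) part of $A(|\Z|, \B(\H))$ is all of $\ell^{\infty}(\Z, \B(\H))$, so $\ell^{\infty}(\Z, \B(\H)) \subseteq \Cu(|\Z|, \B(\H))$; but the diagonal compression of $\Cu(|\Z|) \otimes \B(\H)$ lands in $\ell^{\infty}(\Z) \otimes \B(\H)$, which consists only of bounded functions with relatively norm-compact range. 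A sequence of pairwise orthogonal rank-one projections $(p_{n})_{n}$ lies in the former but not the latter, so $\Cu(|\Z|, \B(\H)) \neq \Cu(|\Z|) \otimes \B(\H)$. Nuclearity of $\Cu(|G|)$ cannot rescue this: $\Cu(|G|, S)$ is not defined as any tensor product of $\Cu(|G|)$ with $S$, and the strict gap between $\Cu(|G|) \otimes S$ and $\Cu(|G|, S)$ is precisely what makes the strong ITAP a nontrivial condition. Once this identification is removed, your rewriting of $\Cu(|G|, S)^{G}$ as $F(\Cu(|G|)^{G}, S, \Cu(|G|) \otimes S)$ in the forward direction, and your rewriting of strong ITAP as $\Cl(G) \otimes S = F(\Cl(G), S, \Cu(|G|) \otimes S)$ in the backward direction, both collapse.

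The two directions need different repairs. For ``AP $\Rightarrow$ strong ITAP'' you should not prove only the scalar case and bootstrap; instead run the Herz--Schur multiplier argument directly on $\Cu(|G|, S)$: the AP provides finitely supported multipliers $u_{\alpha}$ whose Schur multipliers converge point-norm to the identity on $\Cu(|G|, S)$, preserve $G$-invariance and the property of having entries in $S$, and send invariant elements into $\C[G] \odot S \subseteq \Cl(G) \otimes S$ (this is exactly the mechanism of Proposition~\ref{prop AP to SMP} in the paper, which already handles general coefficients). For ``exact $+$ strong ITAP $\Rightarrow$ AP'' the formal steps you quote (Lemma~\ref{lem aotimes}, Proposition~\ref{prop trans}, Theorem~\ref{thm kraus}, Theorem~\ref{thm HK}) are fine \emph{once} one knows that every $x \in F(\Cl(G), T, \Cl(G) \otimes \B(\H))$ lies in $\Cu(|G|, T)^{G}$; but membership in $\Cu(|G|, T)$ requires approximability by finite-propagation $T$-valued matrices, which does not follow merely from the matrix entries of $x$ lying in $T$. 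Supplying that approximation is where exactness genuinely enters in \cite{MR2215118}, and it is the step your argument is missing.
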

\begin{proof} See \cite{MR2215118}.
\end{proof}

\begin{thm}\label{thm ITAP subgroup} Any subgroup of a group with ITAP has ITAP.
\end{thm}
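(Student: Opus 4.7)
The plan is to construct an isometric ``inflation map'' $\iota\colon \Cu(|H|) \hookrightarrow \Cu(|G|)$ that sends $H$-invariants to $G$-invariants; then the ITAP of $G$ reduces the problem to compressing $\Cl(G)$ back to $\Cl(H)$.

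Equip $H$ with the restriction of a proper length function on $G$, so that $|H|$ embeds isometrically into $|G|$ and right-translations act by isometries. The projection $p_H \coloneqq \chi_H \in l^{\infty}(G) \subseteq \Cu(|G|)$ identifies $\Cu(|H|)$ with the corner $p_H \Cu(|G|) p_H$. Fix right coset representatives $\{g_i\}_{i \in I}$ for $H \backslash G$ (with $e$ among them), giving an orthogonal decomposition $l^2 G = \bigoplus_i l^2(Hg_i)$. For a finite-propagation matrix $x \in A(|H|)$, define
	\begin{equation*}
	\iota(x)(h_1 g_i,\, h_2 g_j) \coloneqq \delta_{ij}\, x(h_1, h_2),
	\end{equation*}
and extend by continuity. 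Since right-translation preserves $|G|$-distances, $\iota(x)$ has the same propagation and entry-norm bound as $x$, so $\iota(x) \in \Cu(|G|)$; it is block-diagonal on the coset decomposition with each block unitarily equivalent to $x$, hence $\iota$ is isometric.

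Using the identification $\Cu(|H|)^H = L(H) \cap \Cu(|H|)$ (and likewise for $G$) noted in the example preceding the theorem, the key claim is that $\iota(L(H) \cap \Cu(|H|)) \subseteq L(G) \cap \Cu(|G|)$. An element of $L(H)$ has matrix entries $x(h_1, h_2) = f(h_1 h_2^{-1})$ for some $f\colon H \to \C$; a direct verification---using that $g_i g_j^{-1} \in H$ if and only if $i = j$---shows $\iota(x)(g_1, g_2) = \tilde f(g_1 g_2^{-1})$, where $\tilde f$ is the extension of $f$ by zero off $H$. Hence $\iota(x) \in L(G) \cap \Cu(|G|)$, which equals $\Cl(G)$ by the ITAP of $G$.

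Finally, $x = p_H \iota(x) p_H$ as operators on $l^2 H = p_H l^2 G$. Since $p_H \lambda_g^G p_H$ equals $\lambda_g^H$ for $g \in H$ and vanishes for $g \notin H$, the compression $p_H(\cdot)p_H$ maps the linear span of $\{\lambda_g^G : g \in G\}$ onto that of $\{\lambda_h^H : h \in H\}$; by continuity, $p_H \Cl(G) p_H \subseteq \Cl(H)$. Thus $x \in \Cl(H)$, proving $\Cu(|H|)^H \subseteq \Cl(H)$; the reverse inclusion is immediate. The main technical point is the coset-bookkeeping behind $\iota$---checking propagation control, isometry, and the $L(G)$-membership of $\iota(x)$---all of which follow directly once one invokes the isometry of right-translation and the fact that right-multiplication by $g \in G$ permutes right cosets bijectively.
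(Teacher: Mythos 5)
Your argument is correct and follows essentially the same route as the paper's: embed $\Cu|H|$, $L(H)$ and $\Cl(H)$ into the corresponding objects over $G$ via the coset decomposition (your $\iota$ is exactly that inclusion), apply the ITAP of $G$ to get $\iota(x)\in L(G)\cap\Cu|G|=\Cl(G)$, and then cut back down to $H$. The only real difference is in the last step: where the paper invokes the conditional expectations $E_L$, $E_u$, $E_\lambda$ induced by the positive-definite multiplier $\chi_H$ and their compatibility on $\Cl(G)$, you use the single corner compression $p_H(\cdot)p_H$ together with the identity $x=p_H\iota(x)p_H$ and the inclusion $p_H\Cl(G)p_H\subseteq\Cl(H)$ --- a slightly more hands-on but equally valid way to land in $\Cl(H)$.
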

\begin{proof}
Let $H$ be a subgroup of $G$ and suppose that $G$ has the ITAP. Using the coset decomposition of $G$ one checks that we have inclusions $\Cl(H) \subseteq \Cl(G)$, $L(H) \subseteq L(G)$ and $\Cu|H| \subseteq \Cu |G|$. Now the multiplier with respect to the indicator function of $H$, being positive definite, induces conditional expectations $E_{L} \colon L(G) \to L(H)$, $E_{u} \colon \Cu|G| \to \Cu |H|$ and $E_{\lambda} \colon \Cl(G) \to \Cl(H)$. The restrictions of $E_{L}$ and $E_{u}$ to $\Cl(G)$ are equal to $E_{\lambda}$. Now let $x \in L(H) \cap \Cu |H|$ then $x \in L(G) \cap \Cu |G| = \Cl(G)$ and $E_{L}(x) =x$ as well as $E_{u}(x) = x$. It follows that $E_{\lambda}(x)=x$ i.e.\ $x \in \Cl(H)$ and thus $H$ also has the ITAP.
\end{proof}

Now we consider products.

\begin{lem} Let $G$ and $H$ be countable discrete groups with proper length functions $l_{G} \colon G \to \R_{\ge 0}$ and $l_{H} \colon G \to \R_{\ge 0}$. Then $l_{G \times H} \colon G \times H \to \R_{\ge 0}$ given by $l_{G \times H}(g, h) = \max\{l_{G}(g), l_{H}(h)\}$ is a proper length function and $|G \times H| = |G| \times |H|$.
\end{lem}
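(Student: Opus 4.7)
The plan is to verify the three defining properties of a length function for $l_{G\times H}$, check properness, and then unpack the definitions to see that the induced metric is exactly the product metric already introduced in Subsection~\ref{sub products}. None of this should be genuinely hard; it is really a bookkeeping exercise with $\max$.

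First I would verify the length function axioms. Identity: $l_{G\times H}(1_G,1_H) = \max\{l_G(1_G), l_H(1_H)\} = 0$. Symmetry: $l_{G\times H}((g,h)^{-1}) = \max\{l_G(g^{-1}), l_H(h^{-1})\} = \max\{l_G(g), l_H(h)\} = l_{G\times H}(g,h)$. Subadditivity: using subadditivity of $l_G$ and $l_H$ together with the elementary inequality $\max\{a+b, c+d\} \le \max\{a,c\} + \max\{b,d\}$ for nonnegative reals, we get
\begin{equation*}
l_{G\times H}((g_1,h_1)(g_2,h_2)) = \max\{l_G(g_1g_2), l_H(h_1 h_2)\} \le l_{G\times H}(g_1,h_1) + l_{G\times H}(g_2,h_2).
\end{equation*}

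Next, properness. For any $R > 0$, if $l_{G\times H}(g,h) \le R$ then both $l_G(g) \le R$ and $l_H(h) \le R$. By properness of $l_G$ and $l_H$, the sets $\{g : l_G(g) \le R\}$ and $\{h : l_H(h) \le R\}$ are finite, so their product, which contains the sublevel set of $l_{G\times H}$, is finite.

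Finally, for the identification $|G \times H| = |G| \times |H|$, I would just compute the metric induced by $l_{G\times H}$. For $(g_1,h_1), (g_2,h_2) \in G \times H$,
\begin{align*}
d_{|G\times H|}((g_1,h_1),(g_2,h_2)) &= l_{G\times H}(g_1^{-1}g_2, h_1^{-1}h_2)\\
&= \max\{l_G(g_1^{-1}g_2), l_H(h_1^{-1}h_2)\}\\
&= \max\{d_{|G|}(g_1,g_2), d_{|H|}(h_1,h_2)\},
\end{align*}
which is precisely the product metric $d_{|G|\times|H|}$ defined at the start of Subsection~\ref{sub products}. The main (mild) obstacle is making sure the max-versus-sum inequality used in subadditivity is justified cleanly; everything else is immediate from the definitions.
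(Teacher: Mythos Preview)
Your proof is correct and follows essentially the same route as the paper's: verify the three length-function axioms for $l_{G\times H}$ (using the same $\max$ inequality for subadditivity), note properness, and compute that the induced metric is the product metric. The only cosmetic discrepancy is that the paper writes the induced metric as $d(g_1,g_2)=l(g_1g_2^{-1})$ (right-invariant convention) whereas you use $l(g_1^{-1}g_2)$; the computation goes through identically either way, so just match the paper's convention when you write it up.
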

\begin{proof} First we check that $l$ is a length function. Indeed, we have 
	\begin{equation*}
	l_{G \times H}(e, e) = \max\{l_{G}(e), l_{H}(e)\} = 0
	\end{equation*}
and 
	\begin{align*}
	l_{G \times H}(g^{-1}, h^{-1}) &= \max\{l_{G}(g^{-1}), l_{H}(h^{-1})\}\\
	&= \max\{l_{G}(g), l_{H}(h)\}\\
	&= l_{G \times H}(g, h)
	\end{align*}
and finally
	\begin{align*}
	l_{G \times H}(gg', hh') &= \max\{l_{G}(gg'), l_{H}(hh')\}\\
	&\le \max\{l_{G}(g) + l_{G}(g'), l_{H}(h) + l_{H}(h')\}\\
	&\le \max\{l_{G}(g), l_{H}(h)\} + \max\{l_{G}(g'), l_{H}(h')\}\\
	&= l_{G \times H}(g, h) + l_{G \times H}(g', h').
	\end{align*}
Moreover, it is clear that $l$ is proper. Finally, we have
	\begin{align*}
	d_{|G \times H|}((g_{1}, h_{1}), (g_{2}, h_{2})) &= l_{G \times H}(g_{1}g_{2}^{-1}, h_{1}h_{2}^{-1})\\
	&=\max\{l_{G}(g_{1}g_{2}^{-1}), l_{H}(h_{1}h_{2}^{-1})\}\\
	&=\max\{d_{|G|}(g_{1}, g_{2}), d_{|H|}(h_{1}, h_{2})\}\\
	&= d_{|G| \times |H|}((g_{1}, h_{1}), (g_{2}, h_{2})).
	\end{align*} 
\end{proof}

\begin{prop} Let $G$ and $H$ be countable discrete groups. If $G \times H$ has the ITAP, then $G$ and $H$ also have the ITAP and the triple 
	\begin{equation*}
	((\Cu |G|)^{G}, (\Cu |H|)^{H}, \Cu |G| \otimes \Cu |H|)
	\end{equation*}
has the slice map property.
\end{prop}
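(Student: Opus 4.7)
The plan is to sandwich four natural subspaces of $\B(l^{2}(G\times H))$ between two copies of $\Cl(G)\otimes \Cl(H)$, forcing all inclusions to collapse to equalities. Two preliminary identifications set the stage: $\Cl(G\times H)\cong \Cl(G)\otimes \Cl(H)$ (a standard fact for reduced group \cast-algebras of direct products), and $|G\times H|=|G|\times|H|$, which is the preceding lemma. Together with the hypothesis that $G\times H$ has the ITAP, these rewrite the hypothesis as $\Cu(|G|\times|H|)^{G\times H}=\Cl(G)\otimes \Cl(H)$.

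Next I would write down the chain
\begin{equation*}
\Cl(G)\otimes \Cl(H)\;\subseteq\;(\Cu|G|)^{G}\otimes(\Cu|H|)^{H}\;\subseteq\;(\Cu|G|\otimes \Cu|H|)^{G\times H}\;\subseteq\;\Cu(|G|\times|H|)^{G\times H}
\end{equation*}
and check each inclusion: the first is the elementary containment $\Cl(G)\subseteq L(G)\cap \Cu|G|=(\Cu|G|)^{G}$ (left translations commute with right translations and have finite propagation), and similarly for $H$; the second comes from $S\otimes T\subseteq F(S,T,A\otimes B)$ combined with Proposition~\ref{prop inv}, which identifies $F((\Cu|G|)^{G},(\Cu|H|)^{H},\Cu|G|\otimes \Cu|H|)$ with $(\Cu|G|\otimes \Cu|H|)^{G\times H}$; the third is the functoriality of taking $(G\times H)$-fixed points applied to the inclusion $\Cu|G|\otimes \Cu|H|\subseteq \Cu(|G|\times |H|)$ of Theorem~\ref{thm prod}. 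Since the rightmost space equals $\Cl(G)\otimes \Cl(H)$ by hypothesis, the chain closes and every inclusion is an equality.

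Two conclusions now drop out simultaneously. The middle equality $(\Cu|G|)^{G}\otimes(\Cu|H|)^{H}=(\Cu|G|\otimes \Cu|H|)^{G\times H}$ is precisely the slice map property for the triple $((\Cu|G|)^{G},(\Cu|H|)^{H},\Cu|G|\otimes \Cu|H|)$ via Corollary~\ref{cor inv}. The leftmost equality $\Cl(G)\otimes \Cl(H)=(\Cu|G|)^{G}\otimes(\Cu|H|)^{H}$, combined with Tomiyama's Lemma~\ref{lem tomiyama} applied to the reverse inclusion, forces $(\Cu|G|)^{G}\subseteq \Cl(G)$ and $(\Cu|H|)^{H}\subseteq \Cl(H)$; together with the trivial inclusions already noted, this yields $\Cl(G)=(\Cu|G|)^{G}$ and $\Cl(H)=(\Cu|H|)^{H}$, that is, the ITAP for both $G$ and $H$. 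There is no serious obstacle: the argument is essentially bookkeeping, and the only slightly delicate point is the appeal to Tomiyama's lemma to pass from a tensor-product equality to equality of the individual factors.
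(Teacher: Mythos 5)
Your proof is correct and follows essentially the same route as the paper: the identical chain of inclusions from $\Cl(G)\otimes \Cl(H)$ up to $(\Cu|G\times H|)^{G\times H}$, closed by the ITAP hypothesis, with Tomiyama's Lemma~\ref{lem tomiyama} extracting the ITAP of the factors and Corollary~\ref{cor inv} giving the slice map property. The paper merely adds the remark that the ITAP of $G$ and $H$ also follows directly from Theorem~\ref{thm ITAP subgroup} on subgroups.
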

\begin{proof} By Theorem~\ref{thm prod}, we have 
	\begin{align*}
	\Cl (G \times H) &= \Cl(G) \otimes \Cl(H)\\
	&\subseteq (\Cu |G|)^{G} \otimes (\Cu |H|)^{H}\\
	&\subseteq (\Cu |G| \otimes \Cu |H|)^{G \times H}\\ 
	&\subseteq (\Cu |G \times H|)^{G \times H}\\
	&= \Cl(G \times H).
	\end{align*}
Thus $G$ and $H$ have the ITAP by Lemma~\ref{lem tomiyama} (this is also immediate from Theorem~\ref{thm ITAP subgroup}). The last statement follows from Corollary~\ref{cor inv}.	
\end{proof}

\begin{thm}\label{thm AP coef} Let $G$ be a countable discrete group with the AP and let $B$ be an operator space equipped with an action of a group $H$ by completely bounded maps. Then we have
	\begin{equation*}
	\Cl(G) \otimes B^{H} = \Cu(|G|, B)^{G \times H}.
	\end{equation*}
\end{thm}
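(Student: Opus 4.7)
My plan is to compute the $G \times H$-invariants of $\Cu(|G|, B)$ by taking invariants in two stages: first with respect to $G$, then with respect to $H$. A preliminary step is to observe that the $H$-action on $B$ extends entry-wise to $\Cu(|G|, B)$, that this extension commutes with the $G$-action by right translation on $|G|$, and hence that the two actions combine into a $G \times H$-action with
\begin{equation*}
\Cu(|G|, B)^{G \times H} = \bigl(\Cu(|G|, B)^G\bigr)^H.
\end{equation*}

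For the $G$-step I would invoke the hypothesis that $G$ has the AP. By Theorem~\ref{thm Z}, this yields the strong ITAP of $G$, which applied to the operator space $B \subseteq \B(\H)$ gives
\begin{equation*}
\Cu(|G|, B)^G = \Cl(G) \otimes B.
\end{equation*}
Under this identification, the residual $H$-action becomes the one that is trivial on the $\Cl(G)$-tensorand and agrees with the given action on $B$.

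For the $H$-step I would apply Corollary~\ref{cor inv} to the $\{1_G\} \times H$-action on $\Cl(G) \otimes B$. This reduces the desired identity $\Cl(G) \otimes B^H = (\Cl(G) \otimes B)^H$ to the slice map property for the triple $(\Cl(G), B^H, \Cl(G) \otimes B)$. That property follows from the strong OAP of $\Cl(G)$ (Theorem~\ref{thm HK}) combined with Theorem~\ref{thm kraus}, which ensures that $\Cl(G)$ has the slice map property for every operator space, in particular for $B$ with the subspace $B^H$. Chaining the two reductions produces the claimed equality.

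The main obstacle is not any single hard estimate but the bookkeeping in the first paragraph: one must be careful that the $H$-action on $B$ indeed extends to a completely bounded $H$-action on $\Cu(|G|, B)$ that commutes with the right-translation action of $G$, so that the iterated-invariants identity is legitimate. Once this is confirmed, the strong ITAP (from the AP via Theorem~\ref{thm Z}) handles the $G$-invariants and the slice map property (also from the AP via Theorems~\ref{thm HK} and~\ref{thm kraus}) handles the $H$-invariants, giving the result immediately.
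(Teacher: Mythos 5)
Your proposal is correct and takes essentially the same route as the paper: the AP gives the strong ITAP (Theorem~\ref{thm Z}) to compute the $G$-invariants as $\Cl(G)\otimes B$, and the strong OAP of $\Cl(G)$ (Theorem~\ref{thm HK}) gives the slice map property needed to identify $(\Cl(G)\otimes B)^{H}$ with $\Cl(G)\otimes B^{H}$, exactly as in the paper's use of Proposition~\ref{prop inv}. The only cosmetic difference is that the paper separately records the easy inclusion chain $\Cl(G)\otimes B^{H}\subseteq \Cu(|G|,B)^{G\times H}$, which your chain of equalities makes unnecessary.
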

\begin{proof} 
Clearly,  we have
	\begin{equation*}
	\Cl(G) \otimes B^{H} \subseteq (\Cu |G|)^{G} \otimes B^{H} \subseteq (\Cu |G| \otimes B)^{G \times H} \subseteq \Cu(|G|, B)^{G \times H}.
	\end{equation*}
Since $G$ has AP, it has the strong ITAP by Theorem~\ref{thm Z}, thus 
	\begin{equation*}
	\Cu(|G|, B)^{G} = \Cl(G) \otimes B.
	\end{equation*}
Moreover, the \cast-algebra $\Cl(G)$ has the strong OAP by Theorem~\ref{thm HK}, thus by Proposition~\ref{prop inv}, we have
	\begin{align*}
	(\Cl(G) \otimes B)^{H} = F(\Cl(G), B^{H}, \Cl(G) \otimes B) = \Cl(G) \otimes B^{H}.
	\end{align*}
It follows that
	\begin{align*}
	\Cu(|G|, B)^{G \times H} &= \left(\Cu(|G|, B)^{G}\right)^{H}\\
	&\subseteq (\Cl(G) \otimes B)^{H}\\
	&=\Cl(G) \otimes B^{H}.
	\end{align*}
This completes the proof.
\end{proof}

%

\begin{thm} Let $G$ and $H$ be countable discrete groups. If $G$ has the AP and $H$ has the ITAP, then $G \times H$ has the ITAP.
\end{thm}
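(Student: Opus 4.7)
The plan is to identify $|G\times H|$ with $|G|\times|H|$ (using the lemma immediately preceding the theorem) so that the target equality becomes
$$(\Cu(|G|\times|H|))^{G\times H} = \Cl(G\times H) = \Cl(G)\otimes \Cl(H),$$
and then to sandwich $\Cu(|G|\times|H|)$ between two operator spaces whose $G\times H$-invariants we can compute via Theorem~\ref{thm AP coef}.

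The main input is Theorem~\ref{thm AP coef} applied with the operator space $B := \Cu|H|$ equipped with the right translation action of $H$. Since $H$ acts by $*$-automorphisms (hence completely boundedly) and $H$ has the ITAP, we have $B^{H} = (\Cu|H|)^{H} = \Cl(H)$. Because $G$ has the AP, Theorem~\ref{thm AP coef} yields
$$\Cu(|G|,\Cu|H|)^{G\times H} = \Cl(G)\otimes (\Cu|H|)^{H} = \Cl(G)\otimes \Cl(H) = \Cl(G\times H).$$

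Now I would invoke Theorem~\ref{thm prod} (with $X=|G|$, $Y=|H|$, $S=\C$, upgraded by Lemma~\ref{lem A inclusions} to the coefficient space $S=\Cu|H|$, or more directly by reading the inclusion $\Cu(|G|\times|H|)\subseteq \Cu(|G|,\Cu|H|)$ straight from the theorem) to obtain the chain
$$\Cu|G|\otimes \Cu|H| \;\subseteq\; \Cu(|G|\times|H|) \;\subseteq\; \Cu(|G|,\Cu|H|).$$
Taking $G\times H$-invariants (acting on the first factor through $G$ and the second through $H$) and using the computation above, the right-hand inclusion gives
$$(\Cu|G\times H|)^{G\times H} \subseteq \Cu(|G|,\Cu|H|)^{G\times H} = \Cl(G\times H).$$
For the reverse inclusion, the left-hand side of the chain together with Proposition~\ref{prop inv} gives
$$\Cl(G\times H) = \Cl(G)\otimes \Cl(H) \subseteq (\Cu|G|)^{G}\otimes (\Cu|H|)^{H} \subseteq (\Cu|G|\otimes \Cu|H|)^{G\times H} \subseteq (\Cu|G\times H|)^{G\times H},$$
completing the proof.

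The only delicate point is making sure Theorem~\ref{thm AP coef} is applied with the correct action: the action of $G\times H$ on $\Cu(|G|,\Cu|H|)$ should be the obvious one where $G$ acts on the outer $\Cu|G|$ factor by translation and $H$ acts on the coefficient space $\Cu|H|$ by translation; this matches the $G\times H$-action on $\Cu|G\times H|$ under the embedding supplied by Theorem~\ref{thm prod}. Once this compatibility is verified (it is routine from the matrix-coefficient description in Lemma~\ref{lem prod in}), the argument is purely formal.
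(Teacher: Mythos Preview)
Your proposal is correct and follows essentially the same route as the paper: embed $\Cu|G\times H|$ into $\Cu(|G|,\Cu|H|)$ via Theorem~\ref{thm prod}, compute the $G\times H$-invariants of the latter using Theorem~\ref{thm AP coef} together with the ITAP of $H$, and combine with the trivial inclusion $\Cl(G\times H)\subseteq(\Cu|G\times H|)^{G\times H}$. Your treatment of the reverse inclusion via Proposition~\ref{prop inv} is more elaborate than needed (the containment $\Cl(G\times H)\subseteq(\Cu|G\times H|)^{G\times H}$ is immediate), but the argument is sound.
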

\begin{proof}
By Theorem~\ref{thm prod}, we have
	\begin{equation*}
	\Cu |G \times H| \subseteq \Cu \left(|G|, \Cu |H|\right).
	\end{equation*}
Thus, by Theorem~\ref{thm AP coef}, we have
	\begin{equation*}
	(\Cu |G \times H|)^{G \times H} \subseteq \Cu \left(|G|, \Cu (|H|)\right)^{G \times H} = \Cl(G) \otimes (\Cu |H|)^{H}.
	\end{equation*}
Since $H$ has ITAP, we have $(\Cu |H|)^{H} = \Cl(H)$. Thus, we obtain
	\begin{align*}
	\Cl(G) \otimes \Cl(H) &= \Cl (G \times H)\\
	&\subseteq (\Cu |G \times H|)^{G \times H}\\
	&\subseteq \Cl(G) \otimes \Cl(H),
	\end{align*}
so that $G \times H$ has the ITAP.
\end{proof}

\begin{cor} Let $G$ and $H$ be countable discrete groups. Suppose $G$ has the AP. Then $G \times H$ has the ITAP if and only if $H$ has the ITAP. 
\qed
\end{cor}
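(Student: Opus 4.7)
The plan is to prove the two directions of the biconditional separately, each by invoking a result already proved in the paper, so no new work is really required.

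For the forward direction, assume $G \times H$ has the ITAP. Since $H$ embeds as a subgroup of $G \times H$ (say as $\{1_G\} \times H$), Theorem~\ref{thm ITAP subgroup} immediately gives that $H$ has the ITAP. Note that the hypothesis that $G$ has the AP is not needed for this direction.

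For the reverse direction, assume $H$ has the ITAP. Combined with the hypothesis that $G$ has the AP, the theorem immediately preceding the corollary (the one whose statement is ``If $G$ has the AP and $H$ has the ITAP, then $G \times H$ has the ITAP'') yields that $G \times H$ has the ITAP.

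There is no real obstacle here: the corollary is a direct repackaging of the previous theorem together with Theorem~\ref{thm ITAP subgroup}. The substantive content lies in those two earlier results, which in turn rely on Theorem~\ref{thm AP coef} (using that AP implies strong ITAP via Theorem~\ref{thm Z} and that $\Cl(G)$ has the strong OAP via Theorem~\ref{thm HK}) and on the conditional-expectation argument via the indicator multiplier of the subgroup. Accordingly, I would write the proof as a single short paragraph citing these two results, and mark it with \qed as the authors have already done in the source.
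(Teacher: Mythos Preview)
Your proposal is correct and matches exactly what the paper intends: the corollary is left without proof (marked \qed) because it is an immediate combination of the preceding theorem (for the reverse direction) with Theorem~\ref{thm ITAP subgroup} or the earlier Proposition on products (for the forward direction). There is nothing to add.
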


\section{The Fubini Crossed Product}\label{section crossed}

In this section, we study the crossed product version of the Fubini product.
\subsection{The Fubini Crossed Product}

Let $G$ be a countable discrete group. A {\em $G$-operator space} is an operator space equipped with a completely isometric action of $G$. 

Let $A \subseteq \B(\H)$ be an operator space equipped with a completely isometric action $\alpha$ of $G$. Define a new action $\pi$ of $A$ on $\H \otimes l^{2}G$ by $\pi(a) (v \otimes \delta_{g}) \coloneqq \alpha_{g^{-1}}(a)v \otimes \delta_{g}$ and let $G$ act on $\H \otimes l^{2}G$ by $\lambda(g) (v \otimes \delta_{h}) \coloneqq v \otimes \delta_{gh}$.

The {\em reduced crossed product} $A \rrtimes G$ is defined as the operator space spanned by $\{ \pi(a) \lambda(g) \in \B(\H \otimes l^{2}(G)) \mid a \in A, g \in G\}$.

\begin{defn}[({\cite[Lemma 2.1]{MR1721796}})] For any $\psi \in \B(l^{2} G)_{*}$, there is a natural, completely bounded  slice-map $\id_{A} \rrtimes \psi \colon A \rrtimes G \to A$. If $S \subseteq A$ is a $G$-invariant operator subspace, then the {\em Fubini crossed product} $F(S, A \rrtimes G)$ is defined as the set of all $x \in A \rrtimes G$ such that $(\id_{A} \rrtimes \psi)(x) \in S$ for all $\psi \in \B(l^{2}G)_{*}$. 
\end{defn}

The slice map $\id_{A} \rrtimes \psi $ is given by the restriction of the  von Neumann slice map which maps $A^{**} \overline{\otimes} \B(l^2 G)$ to $A^{**}$.
\begin{rem} We  have 
	\begin{equation*}
	S \rrtimes G \subseteq F(S, A \rrtimes G) \subseteq A \rrtimes G.
	\end{equation*}
In fact, many of the formal properties of the Fubini product hold for Fubini crossed products, usually with the same proof.
\end{rem}

\begin{defn}
We say that $(S, A \rrtimes G)$ has the {\em slice map property} if 
	\begin{equation*}
	F(S, A \rrtimes G) = S \rrtimes G.
	\end{equation*}
\end{defn}

\begin{lem}\label{lem trivial action} If the action of $G$ on $A$ is trivial, then 
	\begin{equation*}
	F(S, A \rrtimes G) = F(S, \Cl(G), A \otimes \Cl(G)).
	\end{equation*}
\end{lem}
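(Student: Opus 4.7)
The plan is to reduce the claim to the weak-$*$ density lemma following the definition of the Fubini product in Subsection~\ref{sub fubini product}. First, when $G$ acts trivially on $A$, the covariant representation reduces to $\pi(a) = a \otimes 1_{l^{2}G}$ and $\lambda(g) = 1_{\H} \otimes \lambda(g)$, so as concrete operator subspaces of $\B(\H \otimes l^{2}G)$ we have $A \rrtimes G = A \otimes \Cl(G)$. On an elementary tensor $a \otimes b$ with $b \in \Cl(G)$, the crossed-product slice map $\id_{A} \rrtimes \psi$ and the tensor-product slice map $\id_{A} \otimes \psi|_{\Cl(G)}$ both return $\psi(b)\,a$, so by linearity and continuity they agree on $A \otimes \Cl(G)$.

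Next, observe that for any $x \in A \otimes \Cl(G)$ and any $\phi \in A^{*}$, the slice $(\phi \otimes \id_{\Cl(G)})(x)$ automatically lies in $\Cl(G)$; hence
\begin{equation*}
F(S, \Cl(G), A \otimes \Cl(G)) = \{\, x \in A \otimes \Cl(G) \mid (\id_{A} \otimes \varphi)(x) \in S \text{ for all } \varphi \in \Cl(G)^{*}\,\}.
\end{equation*}
So the lemma amounts to replacing the test family $\Cl(G)^{*}$ by the family of restrictions $L \coloneqq \{\psi|_{\Cl(G)} : \psi \in \B(l^{2}G)_{*}\}$ arising from the crossed-product definition.

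The key step is to show that the closed unit ball of $L$ is weak-$*$-dense in the closed unit ball of $\Cl(G)^{*}$. Given $c \in \Cl(G) \subseteq \B(l^{2}G)$, Hahn-Banach provides a normal functional on $\B(l^{2}G)$ of norm at most one whose value at $c$ is $\|c\|$; so the unit ball of $L$ norms $\Cl(G)$, and a routine bipolar argument yields the required weak-$*$-density. Once this is in place, invoking the density lemma from Subsection~\ref{sub fubini product} (with $K = A^{*}$ and $L$ as above) shows that the two conditions are equivalent and thus the two Fubini products coincide.

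The only genuine content is the density of $L$: this is the price paid for the Fubini crossed product being defined using only normal functionals on $\B(l^{2}G)$, whereas the Fubini tensor product tests against all of $\Cl(G)^{*}$. Everything else is a bookkeeping identification of slice maps under the trivial-action isomorphism $A \rrtimes G \cong A \otimes \Cl(G)$.
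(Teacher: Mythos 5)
Your argument is correct and follows essentially the same route as the paper: identify $A \rrtimes G$ with $A \otimes \Cl(G)$ under the trivial action, observe that the crossed-product slices are exactly the tensor slices against restrictions of normal functionals, and close the gap with a bounded weak-$*$ density argument (the paper cites Goldstine and redoes the approximation by hand where you invoke the earlier density lemma with $K = A^{*}$). One cosmetic point: Hahn--Banach does not produce a \emph{normal} functional attaining $\|c\|$; what you actually need is that the vector functionals $\langle \xi, \cdot\, \eta\rangle$ with $\|\xi\|, \|\eta\| \le 1$ already norm $\Cl(G)$ (up to $\varepsilon$), which is all the bipolar argument requires.
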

\begin{proof}
Since the action of $G$ on $A$ is trivial, we have $A \rrtimes G  = A \otimes \Cl(G)$. The inclusion $\xymatrix{\Cl(G) \ar@{^{(}->}[r] & \B(l^{2}G)}$ gives the diagram 
	\[\xymatrix{
	\B(l^{2}G)^{*} \ar@{->>}[r]& \Cl(G)^{*}\\
	\B(l^{2}G)_{*} \ar@{^{(}->}[u] \ar@{-->}[ru]& 
	}.\] 
Hence we have $F(S, \Cl(G), A \otimes \Cl(G)) \subseteq F(S, A \rrtimes G)$.
By Goldstine's theorem for any functional $\psi \in \Cl(G)^{*}$, there exists a bounded net $\psi_{n} \in B(l^{2}(G))_{*}$ with $\|\psi_{n}\| \le \|\psi\|$, converging to $\psi$ in the weak$^{*}$ topology. Now it is easy to see that for any $x \in A \otimes \Cl(G)$, the elements $\id_{A} \otimes \psi_{n}(x)$ converge to $\id_{A} \rtimes \psi(x)$ in norm. It follows that $F(S, A \rrtimes G) \subseteq F(S, \Cl(G), A \otimes \Cl(G))$.

%
\end{proof}

\begin{ex} Let $l^{\infty}(G)$ act on $l^{2}G$ by multiplication. Then $G$ acts on $l^{\infty}(G)$ by left multiplication and $c_{0}(G) \subseteq l^{\infty}(G)$ is an invariant \cast-subalgebra. The Fubini crossed product $F(c_{0}(G), l^{\infty}(G) \rrtimes G)$ is the ideal of all ghost operators on $|G|$. Thus $G$ is exact if and only if $(c_{0}(G), l^{\infty}(G) \rrtimes G)$ has the slice map property by \cite{MR3146831}.
\end{ex}

\begin{lem}\label{lem int G} For families of $G$-invariant operator subspaces $\{S_{\alpha} \subseteq A\}$ we have
	\begin{equation*}
	F(\cap_{\alpha} S_{\alpha}, A \rrtimes G) = \cap_{\alpha} F(S_{\alpha}, A \rrtimes G).
	\end{equation*}	
\qed
\end{lem}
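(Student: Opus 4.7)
The plan is to prove the equality by a direct double-inclusion argument from the definition, in exact parallel with the proof of Lemma~\ref{lem int} for ordinary Fubini products. The key observation is that membership in $F(S, A \rrtimes G)$ is defined by a \emph{pointwise} (in $\psi \in \B(l^{2}G)_{*}$) containment condition, and intersection of sets is compatible with pointwise containment in the evident way.

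Concretely, I would unfold both sides using the definition. For the inclusion ``$\subseteq$'', take $x \in F(\cap_{\alpha} S_{\alpha}, A \rrtimes G)$. Then for every $\psi \in \B(l^{2}G)_{*}$ we have $(\id_{A} \rrtimes \psi)(x) \in \cap_{\alpha} S_{\alpha}$, hence $(\id_{A} \rrtimes \psi)(x) \in S_{\alpha}$ for each fixed $\alpha$, which says exactly that $x \in F(S_{\alpha}, A \rrtimes G)$ for every $\alpha$, so $x \in \cap_{\alpha} F(S_{\alpha}, A \rrtimes G)$. For the reverse inclusion, take $x \in \cap_{\alpha} F(S_{\alpha}, A \rrtimes G)$; then for every $\psi$ and every $\alpha$, $(\id_{A} \rrtimes \psi)(x) \in S_{\alpha}$, so $(\id_{A} \rrtimes \psi)(x) \in \cap_{\alpha} S_{\alpha}$ for every $\psi$, giving $x \in F(\cap_{\alpha} S_{\alpha}, A \rrtimes G)$.

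The only mild thing to check is that $\cap_{\alpha} S_{\alpha}$ is again a $G$-invariant operator subspace of $A$ (so that the left-hand side is well defined), which is immediate because each $S_{\alpha}$ is $G$-invariant and closedness and linearity are preserved by arbitrary intersections. There is no real obstacle here; the statement is purely formal and the slice maps $\id_{A} \rrtimes \psi$ never need to be manipulated beyond being evaluated at $x$. Accordingly, the proof deserves only a ``clear from the definitions'' remark, justifying the \qed that follows the statement.
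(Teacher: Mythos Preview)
Your proposal is correct and matches the paper's approach exactly: the paper gives no proof beyond the \qed, treating the lemma as immediate from the definition (just as Lemma~\ref{lem int} is dismissed with ``clear from the definitions''). Your unwinding of the double inclusion is precisely the trivial verification the paper is implicitly invoking.
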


\begin{prop}[({cf.\ \cite[Proposition 2.2]{MR1721796}})]\label{prop ker G} Let $B$ and $D$ be $G$-operator spaces and let $\sigma\colon B \to D$ be a completely bounded $G$-equivariant map. Then
	\begin{equation*}
	F(\ker(\sigma), B \rrtimes G) = \ker (\sigma \rrtimes G).
	\end{equation*}
\qed
\end{prop}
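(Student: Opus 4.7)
The plan is to mimic the proof of Proposition~\ref{prop ker}, with the reduced crossed product playing the role of the tensor product. The core ingredient I will need is the intertwining identity
\begin{equation*}
\sigma \circ (\id_{B} \rrtimes \psi) = (\id_{D} \rrtimes \psi) \circ (\sigma \rrtimes G)
\end{equation*}
valid for every $\psi \in \B(l^{2}G)_{*}$; it expresses the fact that slicing with $\psi$ on the group factor commutes with the application of $\sigma$ on the coefficient space.

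First I would establish this identity. Using the description of the slice map as the restriction of the von Neumann slice map $B^{**} \overline{\otimes} \B(l^{2}G) \to B^{**}$, and viewing $\sigma \rrtimes G$ as the restriction of $\sigma^{**} \otimes \id_{\B(l^{2}G)}$ (which is where $G$-equivariance of $\sigma$ enters), the identity reduces to the standard $W^{*}$-algebraic fact that a slice map by a normal functional on the second factor commutes with the application of a normal completely bounded map on the first factor. Alternatively, I could verify it directly on the dense subspace spanned by $\{\pi(b)\lambda(g) : b \in B,\, g \in G\}$ and extend by complete boundedness.

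With the intertwining in hand, both inclusions are formal. For $\ker(\sigma \rrtimes G) \subseteq F(\ker(\sigma), B \rrtimes G)$: if $(\sigma \rrtimes G)(x) = 0$, then $\sigma[(\id_{B} \rrtimes \psi)(x)] = (\id_{D} \rrtimes \psi)[(\sigma \rrtimes G)(x)] = 0$ for every $\psi$, so $(\id_{B} \rrtimes \psi)(x) \in \ker(\sigma)$. For the reverse inclusion: if $x \in F(\ker(\sigma), B \rrtimes G)$, then $(\id_{D} \rrtimes \psi)[(\sigma \rrtimes G)(x)] = 0$ for all $\psi \in \B(l^{2}G)_{*}$, and I would conclude $(\sigma \rrtimes G)(x) = 0$ by invoking the standard fact that slice maps by normal functionals on $\B(l^{2}G)$ separate points in $D \rrtimes G \subseteq D^{**} \overline{\otimes} \B(l^{2}G)$.

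The hard part, I expect, is the careful justification of the intertwining identity at the reduced $C^{*}$-level: it depends on knowing that $\sigma \rrtimes G$ is well-defined on $B \rrtimes G$ and that it is compatible with the $W^{*}$-setting under the slice maps. Both are part of the general theory developed in \cite{MR1721796}, but they carry the full weight of the argument; once they are granted, the Fubini-kernel equality falls out immediately, in complete parallel with Proposition~\ref{prop ker}.
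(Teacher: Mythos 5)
Your proof is correct and is exactly what the paper intends: the result is stated with the proof omitted, the preceding remark noting that the formal properties of the Fubini product carry over to Fubini crossed products ``usually with the same proof,'' and your argument is precisely the crossed-product analogue of the paper's proof of Proposition~\ref{prop ker}, with the intertwining identity $\sigma \circ (\id_{B} \rrtimes \psi) = (\id_{D} \rrtimes \psi) \circ (\sigma \rrtimes G)$ playing the role of $\tau[(\phi \otimes \id_{B})(x)] = (\phi \otimes \id_{D})[(\id_{A} \otimes \tau)(x)]$. The two supporting facts you flag -- well-definedness of $\sigma \rrtimes G$ for completely bounded $G$-equivariant maps and separation of points by normal slices (via the matrix-entry functionals $\langle \delta_{s}, \cdot\; \delta_{t}\rangle$, as the paper itself uses in Proposition~\ref{prop AP to SMP}) -- are indeed the content borrowed from Kirchberg--Wassermann, and your handling of them is fine.
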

%

\begin{thm} Let $G$ and $H$ be countable discrete groups and let $A$ be a $(G\times H)$-operator space. Then
	\begin{equation*}
	F(A^{H}, A \rrtimes G) = (A \rrtimes G)^{H}.
	\end{equation*}
\end{thm}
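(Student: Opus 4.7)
The plan is to mimic the proof of Proposition~\ref{prop inv}, replacing the Fubini tensor product tools with their reduced crossed product analogues, namely Proposition~\ref{prop ker G} (kernels) and Lemma~\ref{lem int G} (intersections). Since $A$ is a $(G \times H)$-operator space, the actions of $G$ and $H$ on $A$ commute, so for each $h \in H$ the completely bounded map $\sigma_{h} \coloneqq \id_{A} - h \colon A \to A$ is $G$-equivariant, which makes it eligible for Proposition~\ref{prop ker G}.

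The key identification I would make first is that, under the functoriality of $(-)\rrtimes G$,
\[
\sigma_{h} \rrtimes G = \id_{A \rrtimes G} - \tilde h,
\]
where $\tilde h$ denotes the natural extension of the $H$-action to $A \rrtimes G$, given on elementary products by $\tilde h(\pi(a)\lambda(g)) = \pi(h\cdot a)\lambda(g)$; this extension is well defined and completely isometric precisely because $H$ and $G$ commute, so $\tilde h$ intertwines $\pi$ and $\lambda$ correctly. Consequently
\[
\bigcap_{h \in H} \ker(\sigma_{h}) = A^{H}, \qquad \bigcap_{h \in H} \ker(\sigma_{h} \rrtimes G) = (A \rrtimes G)^{H}.
\]

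Plugging these into Lemma~\ref{lem int G} and then Proposition~\ref{prop ker G}, I obtain
\begin{align*}
F(A^{H}, A \rrtimes G) &= F\bigl(\cap_{h} \ker(\sigma_{h}), A \rrtimes G\bigr)\\
&= \bigcap_{h \in H} F(\ker(\sigma_{h}), A \rrtimes G)\\
&= \bigcap_{h \in H} \ker(\sigma_{h} \rrtimes G)\\
&= (A \rrtimes G)^{H},
\end{align*}
which is the desired equality.

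The only non-formal step is the identification $\sigma_{h} \rrtimes G = \id_{A \rrtimes G} - \tilde h$: this is a short direct check on generators $\pi(a)\lambda(g)$ using the commutativity of the $G$- and $H$-actions, extended by norm continuity to all of $A \rrtimes G$. Everything else is a straightforward transfer of the argument of Proposition~\ref{prop inv} from the tensor product to the crossed product setting.
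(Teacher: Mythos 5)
Your proposal is correct and follows essentially the same route as the paper: the authors also set $\sigma_{h} \coloneqq \id_{A} - \alpha_{h}$, observe that $G$-equivariance makes Proposition~\ref{prop ker G} applicable, identify $\sigma_{h} \rrtimes G = \id_{A \rrtimes G} - (\alpha_{h} \rrtimes G)$, and conclude via Lemma~\ref{lem int G} from the two kernel computations. Your extra verification that $\alpha_{h} \rrtimes G$ acts on generators by $\pi(a)\lambda(g) \mapsto \pi(h\cdot a)\lambda(g)$ is a detail the paper leaves implicit, but it is exactly the right justification.
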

\begin{proof}
For $h \in H$, let $\alpha_{h} \colon A \to A$ denote the action by $h$. Then $\alpha_{h}$ is $G$-equivariant. Let $\sigma_{h} \coloneqq \id_{A} - \alpha_{h}$. Then $\sigma_{h}  \rrtimes G = \id_{A \rrtimes G} - \alpha_{h} \rrtimes G \colon A \rrtimes G \to A \rrtimes G$. The proof follows from Proposition~\ref{prop ker G} and Lemma~\ref{lem int G}, since
	\begin{align*}
	\cap_{h \in H} \ker(\sigma_{h}) &= A^{H},\\
	\cap_{h \in H} \ker(\sigma_{h} \rrtimes G) &= (A \rrtimes G)^{H}.
	\end{align*}
\end{proof}

\begin{ex}\label{ex fubini cross of c} Let $l^{\infty}(G)$ act on $l^{2}G$ by multiplication. Then $G$ acts on $l^{\infty}(G)$ by left  multiplication. As already pointed out $\Cu |G|  \cong l^{\infty}(G) \rrtimes G$; moreover, thinking of $\C$ as embedded into $l^{\infty}(G)$ via the constant functions we have $\Cu(|G|)^{G} = F(\C, l^{\infty}(G) \rrtimes G)$. Thus $G$ has the ITAP if and only if $(\C, l^{\infty}(G) \rrtimes G)$ has the slice map property.
\end{ex}

\begin{prop}\label{prop AP to SMP}
Let $G$ be a discrete group with the AP.  Then for any $S \subseteq A$, we have $F(S,A  \rrtimes G) = S \rrtimes G$.
\end{prop}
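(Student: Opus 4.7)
The strategy is to transfer the approximation property of $G$ to a net of Fourier-type multipliers on the crossed product $A\rrtimes G$ and use them to approximate elements of $F(S,A\rrtimes G)$ by elements of $S\rrtimes G$. The AP hypothesis, via Theorem~\ref{thm HK}, gives that $\Cl(G)$ has the strong OAP, which by Theorem~\ref{thm kraus} translates into a slice map property of $\Cl(G)$ for every operator space. The plan is to use this to produce a point-norm approximate identity of completely bounded multipliers on $A\rrtimes G$ whose images have finite "Fourier support."

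Concretely, I would begin by unpacking the AP to obtain a net of finitely supported functions $u_{\alpha} \in A(G)$ converging to the constant function $1$ in the $\sigma(M_{0}A(G),Q(G))$-topology. Each $u_{\alpha}$ induces a completely bounded multiplier $m_{u_{\alpha}} \colon A\rrtimes G \to A\rrtimes G$ characterized on finite sums by $m_{u_{\alpha}}(\pi(a)\lambda(g)) = u_{\alpha}(g)\pi(a)\lambda(g)$. The crucial input, extracted from the Haagerup--Kraus machinery, is that this net converges to $\id_{A\rrtimes G}$ in the point-norm topology; this is where the AP is used in full strength, and it is effectively a reformulation of the strong OAP of $\Cl(G)$ tensored against $A$.

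Given $x \in F(S,A\rrtimes G)$, each $m_{u_{\alpha}}(x)$ is a \emph{finite} sum $\sum_{g\in \operatorname{supp}(u_{\alpha})} u_{\alpha}(g)\,\pi(b_{g})\lambda(g)$, where the Fourier coefficient $b_{g} \in A$ equals $(\id_{A}\rrtimes \psi_{g})(x)$ for the normal functional $\psi_{g} \in \B(l^{2}G)_{*}$ given by $\psi_{g}(T) = \langle T\delta_{e}, \delta_{g}\rangle$. The slice map membership of $x$ forces every such $b_{g}$ to lie in $S$, so $m_{u_{\alpha}}(x) \in S\rrtimes G$. Passing to the limit and using that $S\rrtimes G$ is norm-closed yields $x \in S\rrtimes G$, which is the desired inclusion; the reverse inclusion is immediate.

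The main obstacle is the verification that the multipliers $m_{u_{\alpha}}$ are well-defined and completely bounded on all of $A\rrtimes G$ with point-norm convergence to the identity. This is not an artefact of the definition of AP but rather the content of the Haagerup--Kraus transfer principle (cf.\ the proof of Theorem~\ref{thm HK} and \cite{MR1220905}); once this is available, the identification of Fourier coefficients via the slice maps $\id_{A}\rrtimes \psi_{g}$ and the finite support of each $u_{\alpha}$ make the rest of the argument essentially formal.
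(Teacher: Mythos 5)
Your proposal is correct and follows essentially the same route as the paper: use the AP to produce a net of finitely supported multipliers converging point-norm to the identity on the crossed product, observe via the slice maps against the matrix-coefficient functionals $\langle \,\cdot\,\delta_{e},\delta_{g}\rangle$ that elements of $F(S,A\rrtimes G)$ have all Fourier coefficients in $S$ (using $G$-invariance of $S$), so that each multiplier image is a finite sum in $S\rrtimes G$, and conclude by norm-closedness. The only cosmetic difference is that the paper phrases the multipliers as Schur multipliers on $\Cu(|G|,A)$ rather than as Fourier multipliers on $A\rrtimes G$ directly; these are the same maps.
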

\begin{proof}
Since $G$ has the AP there exists a net $(u_{\alpha})$ in $M_{0}A(G) \cap c_{c}(G)$ converging to $1 \in M_{0}A(G)$ in the $\sigma (M_0A(G), Q(G))$ topology. As explained in \cite{MR2215118}, this implies that the net of Schur multipliers $\hat{M}_{u_{\alpha}} \in CB(\Cu(|G|,A))$ given by $\hat{M}_{u_{\alpha}}([a_{s,t}]) = [u_{\alpha} (st^{-1}) a_{s,t}]$ converges to the identity map in the point norm topology. Fixing a faithful representation $ A \hookrightarrow B(K)$ we can think of $A \rrtimes G \subseteq B(K \otimes \ell^2(G)) $ as matrices indexed by $G$ with entries in $A$. Since the $\langle \delta_s, \cdot \; \delta_t \rangle$ is a normal functional on $B(\ell^2(G))$ whose slice map gives the $s,t$ entry of the matrix in $B(K \otimes \ell^2(G)) $ we can characterise $F(S, A \rrtimes G)$ as those matrices in $A \rrtimes G$ with entries in $S$. Thus it is clear that $F(S, A \rrtimes G) \subseteq \Cu(|G|, A)$. Moreover, since each $u_{\alpha}$ has finite support, it is easy to check that $\hat{M}_{u_{\alpha}} (F(S, A \rrtimes G))$ is contained in  $\textup{span} \{ \pi (s) \lambda (g) \mid s \in S,~ g \in G \} \subseteq S \rrtimes G$. It follows that $S \rrtimes G \ni \hat{M}_{\alpha}(x) \to  x $ for any $x \in F(S, A \rtimes G)$, concluding the proof.
\end{proof}

\begin{prop} If $F(S, A \rrtimes G) = S \rrtimes G$ for all $S \subseteq A$, then $G$ has the AP.
\end{prop}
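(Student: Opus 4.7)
The plan is to reduce the hypothesis about Fubini crossed products to a statement about ordinary Fubini products via Lemma~\ref{lem trivial action}, and then invoke the Haagerup--Kraus theory already developed in Section~\ref{section Fubini}. First, I would specialize the hypothesis to $A := \B(\H)$, where $\H$ is a separable infinite dimensional Hilbert space, equipped with the trivial action of $G$. Since the action is trivial, $A \rrtimes G = A \otimes \Cl(G)$, and Lemma~\ref{lem trivial action} identifies $F(S, A \rrtimes G)$ with $F(S, \Cl(G), \B(\H) \otimes \Cl(G))$. The hypothesis therefore yields
$$F(S, \Cl(G), \B(\H) \otimes \Cl(G)) = S \otimes \Cl(G)$$
for every operator subspace $S \subseteq \B(\H)$.

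Next, I would invoke the symmetry of the Fubini product under the flip $\B(\H) \otimes \Cl(G) \cong \Cl(G) \otimes \B(\H)$, which interchanges the two defining slice conditions in the definition of $F(S, T, A \otimes B)$. Consequently, the displayed equation is equivalent to the triple $(\Cl(G), T, \Cl(G) \otimes \B(\H))$ having the slice map property for every $T \subseteq \B(\H)$; in the terminology of Subsection~\ref{sub OAP}, this says that $\Cl(G)$ has the slice map property for $\B(\H)$.

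Since $\H \cong \H^n$ gives $\B(\H) \otimes M_n \cong \B(\H)$, the operator space $\B(\H)$ is matrix stable, so Theorem~\ref{thm kraus} upgrades the slice map property to the operator approximation property of $\Cl(G)$ for $\B(\H)$, that is, the strong OAP. Theorem~\ref{thm HK} then translates the strong OAP of $\Cl(G)$ into the AP of $G$, finishing the argument.

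The essential step is recognizing that Lemma~\ref{lem trivial action} is the correct bridge: the paper has not developed a direct analogue of Kraus's criterion (Theorem~\ref{thm kraus}) for Fubini crossed products, but trivializing the action bypasses this and returns the problem to the tensor product setting where Kraus's result and the Haagerup--Kraus characterization of the AP are directly applicable. The only subtlety to verify is that the hypothesis is indeed intended to quantify over all $G$-operator spaces $A$ (not merely a fixed one); this is consistent with the statement of Proposition~\ref{prop AP to SMP}, to which this result is the converse.
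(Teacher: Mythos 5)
Your proposal is correct and follows essentially the same route as the paper: trivialize the action, apply Lemma~\ref{lem trivial action} to reduce to the ordinary Fubini product, read off the slice map property of $\Cl(G)$, and then chain Kraus's theorem (Theorem~\ref{thm kraus}) with the Haagerup--Kraus theorem (Theorem~\ref{thm HK}). Your version merely makes explicit a few points the paper leaves implicit, namely the specialization to $A = \B(\H)$, the flip symmetry of the Fubini product, and the matrix stability of $\B(\H)$ needed for the converse direction of Kraus's theorem.
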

\begin{proof} Considering trivial actions we see that $\Cl(G)$ has the slice map property for any operator space $A$ by Lemma \ref{lem trivial action}, thus have the OAP by Kraus' theorem (Theorem \ref{thm kraus}). Hence $G$ has the AP by Haagerup-Kraus theorem (Theorem \ref{thm HK}).
\end{proof}

\subsection{Functoriality}

\begin{lem} Let $S \subseteq A$ and $T \subseteq B$ be $G$-operator spaces. Suppose that a completely bounded map $\sigma\colon A \to B$ maps $\sigma(S) \subseteq T$.
Then
	\begin{equation*}
	(\sigma \rrtimes G)(F(S, A \rrtimes G)) \subseteq F(T, B \rrtimes G).
	\end{equation*}
\end{lem}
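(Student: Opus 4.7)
The plan is to mirror the proof of the analogous functoriality lemma for the Fubini tensor product given earlier in the paper. The whole argument rests on a single naturality identity: for every $\psi \in \B(l^{2}G)_{*}$, the slice map behaves functorially in the $A$-variable, i.e.
\[
(\id_{B} \rrtimes \psi) \circ (\sigma \rrtimes G) \;=\; \sigma \circ (\id_{A} \rrtimes \psi)
\]
as maps $A \rrtimes G \to B$. Granting this identity, the lemma is immediate: fix $x \in F(S, A \rrtimes G)$ and $\psi \in \B(l^{2}G)_{*}$. By definition of the Fubini crossed product we have $(\id_{A} \rrtimes \psi)(x) \in S$, and by hypothesis $\sigma$ carries $S$ into $T$; applying the identity to $x$ therefore yields $(\id_{B} \rrtimes \psi)((\sigma \rrtimes G)(x)) \in T$ for every such $\psi$, which is exactly the condition that $(\sigma \rrtimes G)(x)$ lie in $F(T, B \rrtimes G)$.

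To establish the identity I would invoke the description, already noted in the paper, of $\id_{A} \rrtimes \psi$ as the restriction of the normal slice map $A^{**} \overline{\otimes} \B(l^{2}G) \to A^{**}$. Naturality in the $A$-variable is then essentially built into the construction, since the bidual $\sigma^{**}$ acts on the first tensor factor while $\psi$ acts on the second, and the two commute. For a hands-on verification, one evaluates both sides on the norm-dense subspace $\mathrm{span}\{\pi_{A}(a)\lambda(g) \mid a \in A,\ g \in G\}$, where each side reduces to $\sigma(a)\,\psi(\lambda(g))$; since $\sigma \rrtimes G$ together with both slice maps are completely bounded (hence norm-continuous), the identity extends by density to all of $A \rrtimes G$.

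The one point that should not be passed over in silence is that the statement implicitly requires $\sigma$ to be $G$-equivariant in order for $\sigma \rrtimes G$ to be defined at all (cf.\ Proposition~\ref{prop ker G}); once this is granted there is no real obstacle, and the argument is a straightforward transcription of the corresponding functoriality lemma for Fubini tensor products.
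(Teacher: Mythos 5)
Your proof is correct and is essentially identical to the paper's: both reduce the claim to the naturality identity $(\id_{B} \rrtimes \psi) \circ (\sigma \rrtimes G) = \sigma \circ (\id_{A} \rrtimes \psi)$ and then apply the hypothesis $\sigma(S) \subseteq T$. Your remark that $\sigma$ must implicitly be $G$-equivariant for $\sigma \rrtimes G$ to be defined is a fair observation about the statement, but it does not change the argument.
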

\begin{proof} Let $x \in F(S, A \rrtimes G)$. For any $\psi \in \B(l^{2} G)_{*}$, we have $(\id_{A} \rrtimes \psi)(x) \in S$, thus
	\begin{align*}
	(\id_{A} \rrtimes \psi)[(\sigma \rrtimes G) (x)] 
	&= \sigma [(\id_{A} \rrtimes \psi)(x)]
	\end{align*}
belongs to $T$. Hence the lemma holds.
\end{proof}

\begin{lem} Let $S \subseteq A$ be $G$-operator spaces. Let $\sigma \colon A \to A$ be a completely bounded $G$-map. If $\sigma$ restricts to the identity on $S$, then $\sigma \rrtimes G$ restricts to the identity on $F(S, A \rrtimes G)$.
\end{lem}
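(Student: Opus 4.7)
The plan is to adapt the proof of the analogous statement for ordinary Fubini products (the second lemma of Subsection~\ref{sub fubini product}), substituting the crossed-product slice maps $\id_{A} \rrtimes \psi$ with $\psi \in \B(l^{2} G)_{*}$ for the product functionals $\phi \otimes \psi$. Fix $x \in F(S, A \rrtimes G)$; by definition $(\id_{A} \rrtimes \psi)(x) \in S$ for every normal functional $\psi$ on $\B(l^{2} G)$.

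Next I would verify the naturality identity
\begin{equation*}
\sigma \circ (\id_{A} \rrtimes \psi) = (\id_{A} \rrtimes \psi) \circ (\sigma \rrtimes G)
\end{equation*}
as maps $A \rrtimes G \to A$. On the dense span of elements $\pi(a)\lambda(g)$, both sides evaluate, after using the $G$-equivariance of $\sigma$, to the same element of $A$; since all maps involved are completely bounded the identity extends by continuity to all of $A \rrtimes G$. Combining this with $\sigma|_{S} = \id_{S}$ and $(\id_{A} \rrtimes \psi)(x) \in S$ gives
\begin{equation*}
(\id_{A} \rrtimes \psi)[(\sigma \rrtimes G)(x)] = \sigma[(\id_{A} \rrtimes \psi)(x)] = (\id_{A} \rrtimes \psi)(x)
\end{equation*}
for every $\psi \in \B(l^{2} G)_{*}$.

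Finally, to pass from equality of all slices to equality of elements, I would invoke that the family of slice maps $\{\id_{A} \rrtimes \psi \mid \psi \in \B(l^{2} G)_{*}\}$ separates points of $A \rrtimes G$. This is the same remark used in the proof of Proposition~\ref{prop AP to SMP}: fixing a faithful representation $A \hookrightarrow \B(K)$ and viewing $A \rrtimes G \subseteq \B(K \otimes l^{2} G)$ as $A$-valued $G \times G$-matrices, the vector functionals $\langle \delta_{s}, \,\cdot\, \delta_{t} \rangle$ recover the $(s,t)$-entry. Hence $(\sigma \rrtimes G)(x) = x$ as required. The main obstacle is the naturality identity in the second step; once that is pinned down on generators and extended by density, the rest is immediate from the slice-map definitions.
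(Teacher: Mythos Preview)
Your proposal is correct and follows essentially the same route as the paper's proof: both establish the naturality identity $(\id_{A} \rrtimes \psi)\circ(\sigma \rrtimes G) = \sigma \circ (\id_{A} \rrtimes \psi)$, apply $\sigma|_{S}=\id_{S}$ to the slice $(\id_{A} \rrtimes \psi)(x)\in S$, and then conclude $(\sigma\rrtimes G)(x)=x$ from equality of all slices. The paper's version is simply terser, leaving the verification of naturality and the point-separation of slice maps implicit, whereas you spell these out.
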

\begin{proof} For $x \in F(S, A \rrtimes G)$ and $\psi \in \B(l^{2}G)_{*}$, we have
	\begin{align*}
	(\id_{A} \rrtimes \psi)[(\sigma \rrtimes G)(x)] = \sigma[(\id_{A} \rrtimes \psi)(x)] =(\id_{A} \rrtimes \psi)(x).
	\end{align*}
Thus $(\sigma \rrtimes G)(x) = x$.	
\end{proof}

\begin{cor}[({cf.~\cite[Lemma 2]{MR567831}})] Let $S \subseteq A$ and $S \subseteq B$ be $G$-operator spaces. If $\phi \colon A \to B$ and $\psi \colon B \to A$ are completely bounded $G$-maps such that $(\psi \circ \phi)_{|S} = \id_{S}$ and $(\phi \circ \psi)_{|S} = \id_{S}$, then there is a completely bounded $G$-isomorphism $F(S, A \rrtimes G) \to F(S, B \rrtimes G)$ which is the identity on $S \rrtimes G$.  
\qed
\end{cor}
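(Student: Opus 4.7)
The plan is to combine the two preceding lemmas in the Functoriality subsection, treating the corollary as a formal consequence of them. Given that $\phi$ and $\psi$ are $G$-equivariant completely bounded maps restricting to the identity on $S$ (which is how I read the hypothesis together with the claim that the resulting isomorphism is the identity on $S \rrtimes G$), the first lemma produces completely bounded maps
\begin{equation*}
\phi \rrtimes G \colon F(S, A \rrtimes G) \to F(S, B \rrtimes G), \qquad \psi \rrtimes G \colon F(S, B \rrtimes G) \to F(S, A \rrtimes G),
\end{equation*}
since $\phi$ (resp.\ $\psi$) sends $S$ into $S$.

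Next, I would apply the second lemma to the $G$-equivariant completely bounded maps $\psi \circ \phi \colon A \to A$ and $\phi \circ \psi \colon B \to B$, both of which restrict to the identity on $S$ by hypothesis. Using that the assignment $\sigma \mapsto \sigma \rrtimes G$ is functorial on $G$-equivariant completely bounded maps (which follows from the universal property of the reduced crossed product, together with the explicit description of the slice maps), I get
\begin{equation*}
(\psi \rrtimes G) \circ (\phi \rrtimes G) = (\psi \circ \phi) \rrtimes G = \id_{F(S, A \rrtimes G)}
\end{equation*}
and symmetrically $(\phi \rrtimes G) \circ (\psi \rrtimes G) = \id_{F(S, B \rrtimes G)}$. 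Hence $\phi \rrtimes G$ is a completely bounded $G$-equivariant isomorphism with inverse $\psi \rrtimes G$.

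Finally, because $\phi|_{S} = \id_{S}$, the map $\phi \rrtimes G$ sends each generator $\pi(s)\lambda(g)$ with $s \in S$ to $\pi(\phi(s))\lambda(g) = \pi(s)\lambda(g)$, so by linearity and continuity it restricts to the identity on $S \rrtimes G$. There is no real obstacle here: the argument is entirely formal, and the only step requiring care is the functoriality of $\rrtimes G$ under composition, which is standard.
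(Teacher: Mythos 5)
Your argument is correct and is exactly the intended one: the paper leaves this corollary as an immediate consequence of the two preceding functoriality lemmas, combined with the (routine, checked on the dense span of the elements $\pi(a)\lambda(g)$) identity $(\psi \rrtimes G)\circ(\phi \rrtimes G) = (\psi\circ\phi)\rrtimes G$. Your reading of the hypothesis as $\phi_{|S} = \id_{S}$ and $\psi_{|S} = \id_{S}$ is the right one --- it matches the tensor-product analogue (cf.\ the corollary after Huruya's Lemma~2 in Section~\ref{section Fubini}) and is what makes both the application of the first lemma and the final claim about $S \rrtimes G$ work.
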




Injective envelopes of $C^*$-algebras, operator systems and operator spaces have been considered by various authors \cite{MR519044,MR814075,MR1863404,MR2016568}. The $G$-injective envelope has only been defined for operator systems in the literature. However, the definitions and constructions are all analogous. First one has to find an injective extension of the given object in the appropriate category and then minimise it in such a way that uniqueness is automatic. Therefore in the following, we omit the proofs. For a $G$-operator space $S$, we denote the $G$-injective envelope by $I_{G}(S)$. We write $I(S)$ if $G$ is trivial.

\begin{lem}[({cf. \cite[Theorem 4]{MR567831}})]\label{lem univ fubini cross} Let $S$ be a $G$-operator system and let $A$ be a $G$-injective operator system containing $S$. Then the Fubini crossed product $F(S, A \rrtimes G)$ is independent of $A$. \qed
\end{lem}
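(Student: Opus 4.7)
The plan is to reduce the statement to the functoriality corollary proved just above for Fubini crossed products, by invoking the extension property built into $G$-injectivity. Suppose $A_{1}$ and $A_{2}$ are two $G$-injective operator systems each containing $S$ as a $G$-operator subsystem. I would first use the $G$-injectivity of $A_{2}$ applied to the inclusion $\iota_{2} \colon S \hookrightarrow A_{2}$, regarded as a completely positive (hence completely contractive) $G$-map defined on the subspace $S \subseteq A_{1}$, to produce a completely bounded $G$-equivariant extension $\phi \colon A_{1} \to A_{2}$ satisfying $\phi|_{S} = \id_{S}$. Exchanging the roles of $A_{1}$ and $A_{2}$ and using the $G$-injectivity of $A_{1}$, I would analogously obtain a completely bounded $G$-equivariant map $\psi \colon A_{2} \to A_{1}$ with $\psi|_{S} = \id_{S}$.

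Next I would observe that the compositions satisfy $(\psi \circ \phi)|_{S} = \id_{S}$ and $(\phi \circ \psi)|_{S} = \id_{S}$, so the hypotheses of the preceding functoriality corollary for Fubini crossed products are met. Applying that corollary directly yields a completely bounded $G$-isomorphism
	\begin{equation*}
	\phi \rrtimes G \colon F(S, A_{1} \rrtimes G) \xrightarrow{\;\cong\;} F(S, A_{2} \rrtimes G)
	\end{equation*}
with inverse $\psi \rrtimes G$, and which restricts to the identity on $S \rrtimes G$. This is the desired sense in which $F(S, A \rrtimes G)$ is independent of the particular $G$-injective operator system $A \supseteq S$ chosen.

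The routine verifications (that $\phi$ and $\psi$ really exist with the claimed properties, and that $\phi \rrtimes G$ and $\psi \rrtimes G$ are well-defined completely bounded maps between the reduced crossed products) are direct consequences of the two preceding functoriality lemmas and the $G$-injective extension property, so there is nothing further to check beyond quoting these facts. The only genuine obstacle is the foundational one of ensuring that $G$-injective operator systems satisfy the required equivariant extension property in the first place, but since the paper defers existence and extension for $G$-injective envelopes to the cited literature \cite{MR519044,MR814075,MR1863404,MR2016568}, this is treated as given. Once that foundational ingredient is in place, the proof is merely an application of the preceding corollary.
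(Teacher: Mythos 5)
Your argument is correct and is exactly the one the paper intends: the paper omits the proof (deferring to the analogy with Huruya's Theorem 4), but the preceding functoriality corollary for Fubini crossed products is stated precisely so that one extends the two inclusions $S \hookrightarrow A_{2}$ and $S \hookrightarrow A_{1}$ to completely bounded $G$-maps $\phi \colon A_{1} \to A_{2}$ and $\psi \colon A_{2} \to A_{1}$ fixing $S$ via $G$-injectivity, and then applies that corollary to get the isomorphism $F(S, A_{1} \rrtimes G) \cong F(S, A_{2} \rrtimes G)$ restricting to the identity on $S \rrtimes G$. Nothing further is needed.
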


\begin{defn}
The Fubini crossed product $F(S, A \rrtimes G)$ is called the {\em universal Fubini crossed product} of $S$ by $G$ and denoted $F(S, G)$. It is the largest Fubini crossed product of $S$.

We say that $S$ has the {\em universal slice map property} for $G$ if $S \rrtimes G = F(S, G)$.
\end{defn}


\begin{ex} For a discrete group $G$, the space $c_{0}(G)$ has the universal slice map property if and only if $G$ is exact and the space $\C$ has the universal slice map property if and only if $G$ has the ITAP.
\end{ex}

\begin{thm} Let $G$ be a discrete group. Then $(\Cu|G|)^{G} \subseteq I(\Cl(G))$. \qed
\end{thm}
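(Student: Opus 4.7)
The plan is to leverage the universal Fubini crossed product machinery developed in this section, combined with the $G$-injective envelope $I_{G}(\C)$ of the scalars.

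By Example~\ref{ex fubini cross of c}, we have $(\Cu|G|)^{G} = F(\C, l^{\infty}(G) \rrtimes G)$. Since $l^{\infty}(G)$ is $G$-injective for any discrete group $G$, Lemma~\ref{lem univ fubini cross} identifies this Fubini crossed product with the universal Fubini crossed product $F(\C, G)$, independently of the choice of $G$-injective ambient. Recomputing using the minimal $G$-injective choice, namely $I_{G}(\C)$ itself, gives
\begin{equation*}
(\Cu|G|)^{G} \,=\, F(\C, G) \,=\, F(\C, I_{G}(\C) \rrtimes G) \,\subseteq\, I_{G}(\C) \rrtimes G,
\end{equation*}
the last inclusion being immediate from the definition of the Fubini crossed product.

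It then suffices to embed $I_{G}(\C) \rrtimes G$ into $I(\Cl(G))$ in a way which fixes $\Cl(G)$ pointwise. The canonical unital inclusion $\Cl(G) \hookrightarrow I_{G}(\C) \rrtimes G$ should be shown to be an essential operator system extension: any completely contractive map $\phi \colon I_{G}(\C) \rrtimes G \to \B(\H)$ which is completely isometric on $\Cl(G)$ ought to be completely isometric on the whole crossed product. Once essentiality is established, the characterization of $I(\Cl(G))$ as the maximal essential extension of $\Cl(G)$ produces the desired embedding $I_{G}(\C) \rrtimes G \hookrightarrow I(\Cl(G))$, and chaining inclusions yields the theorem.

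The hard part will be verifying the essentiality of $\Cl(G) \subseteq I_{G}(\C) \rrtimes G$. This is where the $G$-injectivity (equivalently, the $G$-rigidity) of $I_{G}(\C)$ is crucially exploited, and it relies on Hamana's analysis of injective envelopes in the presence of a group action, together with the identification of $I_{G}(\C)$ with the continuous functions on the Furstenberg boundary $\partial_{F}G$ due to Hamana and sharpened by Kalantar--Kennedy. All remaining steps are formal consequences of the Fubini crossed product framework developed above.
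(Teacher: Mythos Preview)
Your approach is essentially identical to the paper's sketch: identify $(\Cu|G|)^{G}$ with the universal Fubini crossed product $F(\C,G)$ via the $G$-injectivity of $l^{\infty}(G)$, then use the chain $F(\C,G)\subseteq I_{G}(\C)\rrtimes G\subseteq I(\C\rrtimes G)=I(\Cl(G))$, the last inclusion coming from essentiality of $S\rrtimes G\subseteq I_{G}(S)\rrtimes G$. One small remark: the essentiality step is already contained in Hamana's general theory of $G$-injective envelopes (the paper's reference \cite{MR814075}), so the Kalantar--Kennedy identification of $I_{G}(\C)$ with $C(\partial_{F}G)$ is not needed here.
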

\begin{proof} We only sketch the proof.

Since $l^{\infty}(G)$ is $G$-injective, we see that $F(\C, G) \cong F(\C, l^{\infty}(G) \rrtimes G) \cong (\Cu|G|)^{G}$ by Lemma~\ref{lem univ fubini cross} and Example~\ref{ex fubini cross of c}. On the other hand, for any $S$ we have $S \rrtimes G \subseteq F(S, G) \subseteq I_{G}(S) \rrtimes G \subseteq I(S \rrtimes G)$.
\end{proof}

%
%
%
%
%
%
%
%
%
%



\begin{thebibliography}{10}

\bibitem{MR641529}
R.~J. Archbold.
\newblock Approximating maps and exact {$C^{\ast} $}-algebras.
\newblock {\em Math. Proc. Cambridge Philos. Soc.}, 91(2):285--289, 1982.

\bibitem{MR1863404}
D.~P. Blecher and V.~I. Paulsen.
\newblock Multipliers of operator spaces, and the injective envelope.
\newblock {\em Pacific J. Math.}, 200(1):1--17, 2001.

\bibitem{MR2391387}
N.~P. Brown and N.~Ozawa.
\newblock {\em {$C^*$}-algebras and finite-dimensional approximations},
  volume~88 of {\em Graduate Studies in Mathematics}.
\newblock American Mathematical Society, Providence, RI, 2008.

\bibitem{MR1793753}
E.~G. Effros and Z.-J. Ruan.
\newblock {\em Operator spaces}, volume~23 of {\em London Mathematical Society
  Monographs. New Series}.
\newblock The Clarendon Press Oxford University Press, New York, 2000.

\bibitem{MR2016568}
M.~Frank and V.~I. Paulsen.
\newblock Injective envelopes of {$C^*$}-algebras as operator modules.
\newblock {\em Pacific J. Math.}, 212(1):57--69, 2003.

\bibitem{MR1220905}
U.~Haagerup and J.~Kraus.
\newblock Approximation properties for group {$C\sp *$}-algebras and group von
  {N}eumann algebras.
\newblock {\em Trans. Amer. Math. Soc.}, 344(2):667--699, 1994.

\bibitem{MR519044}
M.~Hamana.
\newblock Injective envelopes of {$C^{\ast} $}-algebras.
\newblock {\em J. Math. Soc. Japan}, 31(1):181--197, 1979.

\bibitem{MR814075}
M.~Hamana.
\newblock Injective envelopes of {$C^\ast$}-dynamical systems.
\newblock {\em Tohoku Math. J. (2)}, 37(4):463--487, 1985.

\bibitem{MR0328609}
R.~Haydon and S.~Wassermann.
\newblock A commutation result for tensor products of {$C^{\ast} $}-algebras.
\newblock {\em Bull. London Math. Soc.}, 5:283--287, 1973.

\bibitem{MR567831}
T.~Huruya.
\newblock Fubini products of {$C^{\ast} $}-algebras.
\newblock {\em T\^ohoku Math. J. (2)}, 32(1):63--70, 1980.

\bibitem{MR985277}
T.~Huruya and S.-H. Kye.
\newblock Fubini products of {$C^*$}-algebras and applications to
  {$C^*$}-exactness.
\newblock {\em Publ. Res. Inst. Math. Sci.}, 24(5):765--773, 1988.

\bibitem{MR3272046}
T.~Katsura and O.~Uuye.
\newblock On the invariant uniform {R}oe algebra.
\newblock {\em J. Operator Theory}, 72(2):549--556, 2014.

\bibitem{MR715549}
E.~Kirchberg.
\newblock The {F}ubini theorem for exact {$C^{\ast} $}-algebras.
\newblock {\em J. Operator Theory}, 10(1):3--8, 1983.

\bibitem{MR1721796}
E.~Kirchberg and S.~Wassermann.
\newblock Exact groups and continuous bundles of {$C^*$}-algebras.
\newblock {\em Math. Ann.}, 315(2):169--203, 1999.

\bibitem{MR1138840}
J.~Kraus.
\newblock The slice map problem and approximation properties.
\newblock {\em J. Funct. Anal.}, 102(1):116--155, 1991.

\bibitem{MR1976867}
V.~Paulsen.
\newblock {\em Completely bounded maps and operator algebras}, volume~78 of
  {\em Cambridge Studies in Advanced Mathematics}.
\newblock Cambridge University Press, Cambridge, 2002.

\bibitem{MR2007488}
J.~Roe.
\newblock {\em Lectures on coarse geometry}, volume~31 of {\em University
  Lecture Series}.
\newblock American Mathematical Society, Providence, RI, 2003.

\bibitem{MR3146831}
J.~Roe and R.~Willett.
\newblock Ghostbusting and property {A}.
\newblock {\em J. Funct. Anal.}, 266(3):1674--1684, 2014.

\bibitem{MR0218906}
J.~Tomiyama.
\newblock Applications of {F}ubini type theorem to the tensor products of
  {$C^{\ast} $}-algebras.
\newblock {\em T\^ohoku Math. J. (2)}, 19:213--226, 1967.

\bibitem{MR0397427}
J.~Tomiyama.
\newblock Tensor products and approximation problems of {$C\sp*$}-algebras.
\newblock {\em Publ. Res. Inst. Math. Sci.}, 11(1):163--183, 1975/76.

\bibitem{MR0450985}
J.~Tomiyama.
\newblock Fubini algebras and the commutation theorem of tensor product of
  {$C\sp*$}-algebras.
\newblock In {\em Symposia {M}athematica, {V}ol. {XX} ({C}onvegno sulle
  {A}lgebre {$C\sp*$} e loro {A}pplicazioni in {F}isica {T}eorica, {C}onvegno
  sulla {T}eoria degli {O}peratori {I}ndice e {T}eoria {$K$}, {INDAM}, {R}ome,
  1975)}, pages 27--37. Academic Press, London, 1976.

\bibitem{MR0420296}
J.~Tomiyama.
\newblock On the {F}ubini product of von {N}eumann algebras.
\newblock {\em Bull. Yamagata Univ. Natur. Sci.}, 9(1):53--56, 1976.
\newblock Collection of articles dedicated to {Isuke} {Sat{\={o}}}.

\bibitem{MR0410402}
S.~Wassermann.
\newblock The slice map problem for {$C\sp*$}-algebras.
\newblock {\em Proc. London Math. Soc. (3)}, 32(3):537--559, 1976.

\bibitem{MR2215118}
J.~Zacharias.
\newblock On the invariant translation approximation property for discrete
  groups.
\newblock {\em Proc. Amer. Math. Soc.}, 134(7):1909--1916 (electronic), 2006.

\end{thebibliography}
\end{document}